\newtheorem{theorem}{Theorem}[section]
\newtheorem*{theorem*}{Theorem}
\newtheorem{lemma}[theorem]{Lemma}
\newtheorem{proposition}[theorem]{Proposition}
\newtheorem{corollary}[theorem]{Corollary}
\theoremstyle{definition}
\newtheorem{definition}[theorem]{Definition}
\newtheorem{example}[theorem]{Example}
\newtheorem{remark}[theorem]{Remark}
\newcommand{\pcoor}[1]{%
  \begingroup\lccode`~=`: \lowercase{\endgroup
  \edef~}{\mathbin{\mathchar\the\mathcode`:}\nobreak}%
  [% opening symbol
  \begingroup
  \mathcode`:=\string"8000
  #1%
  \endgroup 
  ]% closing symbol
} % homogeneous coordinate
\newcommand{\ltens}{\mathbin{\displaystyle\stackrel{L}{\otimes}}} % derived tensor product
\newcommand{\leqp}{%
  \mathrel{\raisebox{-0.5ex}{$\scriptscriptstyle($}}%
  \leq
  \mathrel{\raisebox{-0.5ex}{$\scriptscriptstyle)$}}%
}
\newenvironment{psmallmatrix}
  {\left(\begin{smallmatrix}}
  {\end{smallmatrix}\right)}
\newcommand{\cat}[1]{\mathsf{#1}}
\newcommand{\st}[1]{\mathfrak{#1}}
\newcommand{\sh}[1]{\mathcal{#1}}
\newcommand{\tail}[1]{\mathbf{#1}}
\newcommand{\cpx}[1]{\mathbb{#1}}
\newcommand{\chch}[1]{\mathbf{#1}}
\newcommand{\grsh}[1]{\mathcal{#1}}
\def\Sch{\cat{Sch}}
\def\gr{\cat{gr}}
\def\qgr{\cat{qgr}}
\def\tor{\cat{tor}}
\def\Spec{\mathrm{Spec}}
\def\Hom{\mathrm{Hom}}
\def\vir{\mathrm{vir}}
\def\Hilb{\mathrm{Hilb}}
\def\Proj{\mathrm{Proj}}
\def\Coh{\mathrm{Coh}}
\def\DT{\mathrm{DT}}
\def\Ext{\mathrm{Ext}}
\def\Quot{\mathrm{Quot}}
\def\SHom{\sh{H}\mathrm{om}}
\def\op{\mathrm{op}}
\def\pd{\mathrm{pd}}
\def\der{\mathcal{D}}
\def\id{\mathrm{id}}
\def\JH{\mathrm{JH}}
\def\ltensor{\ltens}
\def\at{\mathrm{at}}
\def\ob{\mathrm{ob}}
\title{Donaldson--Thomas theory of quantum Fermat quintic threefolds I}
\author{Yu-Hsiang Liu}
\address{Department of Mathematics, University of British Columbia}
\email{\href{mailto:yuliu@math.ubc.ca}{yuliu@math.ubc.ca}}
\thanks{} 
\begin{document}

\begin{abstract} 
In this paper, we study non-commutative projective schemes whose associated non-commutative graded algebras are finite over their centers. We study their moduli spaces of stable sheaves, and construct a symmetric obstruction theory in the Calabi--Yau-3 case. This allows us to define Donaldson--Thomas type invariants. We also discuss the simplest examples, called quantum Fermat quintic threefolds.
\end{abstract}

\maketitle

\setcounter{tocdepth}{1}

\tableofcontents

\section{Introduction}

In \cite{Tho00}, Thomas introduced the integer-valued invariants, now called Donaldson--Thomas invariants, for smooth Calabi--Yau threefolds by integrations over the degree zero virtual fundamental classes for moduli spaces of stable sheaves. Later it was discovered by Behrend (\cite{Beh09}) that these invariants can be computed by the weighted Euler characteristics of certain constructible functions on the moduli spaces. Since then, there have been many generalizations, including Joyce--Song's generalized Donaldson--Thomas theory (\cite{JS12}) and Kontsevich--Soibelman's motivic (cohomological) Donaldson--Thomas theory (\cite{KS08}), which conjecturally works for any Calabi--Yau-3 (triangulated) category.

Our motivation starts from a special Calabi--Yau-$3$ category, called the quantum Fermat quintic threefold, constructed in \cite{Kan15}. A quantum Fermat quintic threefold $\qgr(A_Q)$ is a non-commutative projective scheme (\cite{AZ94}) defined by a graded algebra
\[
A=\mathbb{C}\langle t_0,\ldots,t_4\rangle\Big/\left(\sum_{n=0}^4 t_n^5,t_it_j-q_{ij}t_jt_i \right),
\]
for certain quantum parameters $q_{ij}$'s. This category is not equivalent to the category of coherent sheaves on any Calabi--Yau threefold. On the other hands, most known examples in the study of non-commutative Donaldson--Thomas theory (\cite{Sze08}, \cite{CMPS}) are given by quivers with potentials, which are non-commutative analogues of local Calabi--Yau threefolds. The quantum Fermat quintic threefold is projective, in the sense that its moduli spaces are expected to be projective. Therefore we may define Donaldson--Thomas type invariants via integration over the virtual fundamental class on the moduli spaces, which is essential for deformation invariance.

The purpose of this paper is to develop the Donaldson--Thomas theory on quantum Fermat quintic threefolds. The first step is to construct moduli spaces. We observe that $A$ contains a commutative subalgebra $B=\mathbb{C}[x_0,\ldots,x_4]/(\sum_{n=0}^4 x_n)\subset Z(A)$, where $x_i=t_i^5$, such that $A$ is a finite (graded) $B$-module. Take $X=\Proj(B)\cong\mathbb{P}^3$, then $A$ naturally induces a sheaf $\sh{A}$ of non-commutative $\sh{O}_X$-algebras on $X$. It can be shown that there is an equivalence of categories between $\qgr(A)$ and the category $\Coh(\sh{A})$ of coherent $\sh{A}$-modules (Proposition~\ref{prop:qgr-to-coh}).

For a general smooth projective variety $X$ with a coherent sheaf $\sh{A}$ of non-commutative $\sh{O}_X$-algebras, the stability condition for $\sh{A}$-modules and their moduli spaces have been studied by Simpson \cite{Sim94}. Fix a polarization $\sh{O}_X(1)$ on $X$, then the stability condition for $\sh{A}$-modules is defined via the Hilbert polynomials on $X$ similarly to Gieseker stability for coherent sheaves. Simpson's moduli spaces of (semi)stable $\sh{A}$-modules have all the desired properties.

%\begin{theorem}[\cite{Sim94}]
%Let $\st{M}^{(s)s,h}(X,\sh{A})$ be the moduli stack of (semi)stable $\sh{A}$-modules with Hilbert polynomial $h$. Then
%\begin{enumerate}[(a)]
%    \item The moduli stack $\st{M}^{(s)s,p}(X,\sh{A})$ is an Artin stack of finite type, and admits a good moduli space $M^{(s)s,p}(X,\sh{A})$.
%    \item The coarse moduli scheme $M^{ss,p}(X,\sh{A})$ is projective, whose points are in one-to-one correspondence with S-equivalent classes of semistable $\sh{A}$-modules.
%    \item The morphism $\st{M}^{s,p}(X,\sh{A})\to M^{s,p}(X,\sh{A})$ is a $\mathbb{C}^*$-gerbe, and $M^{s,p}(X,\sh{A})$ is the open subscheme of $M^{ss,p}(X,\sh{A})$ whose points corresponds to isomorphism classes of stable $\sh{A}$-modules.
%\end{enumerate}
%\end{theorem}

The next step is to construct a virtual fundamental class for the moduli space $M$ of stable $\sh{A}$-modules by showing $M$ admits a perfect obstruction theory (\cite{BF97}). We start by constructing an obstruction theory on the moduli space $M$.

\begin{theorem}[= Corollary~\ref{cor:ob_for_M}]
Let $\sh{F}$ be the universal family of twisted $\sh{A}$-modules on $X\times M$. Then there is an obstruction theory
\[
\cpx{E}:=\Big(R\pi_{M*}R\SHom_{\sh{A}_M}(\sh{F},\sh{F})\Big)^{\vee}[-1]\to\cpx{L}_M
\]
for $M$.
\end{theorem}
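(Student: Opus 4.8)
The plan is to produce the morphism $\phi\colon\cpx{E}\to\cpx{L}_M$ from a relative, $\sh{A}$-linear Atiyah class of the universal family $\sh{F}$, and then to verify the two defining conditions of an obstruction theory by a base-change comparison with the deformation theory of $\sh{A}$-modules. Write $\pi_M\colon X\times M\to M$ for the projection and let $\sh{A}_M$ denote the pullback of $\sh{A}$ to $X\times M$. Since $A$ is finite over the homogeneous coordinate ring of $X\cong\mathbb{P}^3$, the sheaf $\sh{A}$ is locally free over $\sh{O}_X$ of finite rank and of finite global dimension; hence $\sh{F}$ is, locally on $X\times M$, quasi-isomorphic to a bounded complex of locally free $\sh{A}_M$-modules, so $R\SHom_{\sh{A}_M}(\sh{F},\sh{F})$ is a perfect complex on $X\times M$ and, $\pi_M$ being proper, $\cpx{G}:=R\pi_{M*}R\SHom_{\sh{A}_M}(\sh{F},\sh{F})$ is a perfect complex on $M$; this perfectness is what makes the dual $\cpx{E}=\cpx{G}^\vee[-1]$ well behaved. (That $\sh{F}$ is \emph{twisted} only means it is $\pi_M^*\alpha$-twisted for a $\mathbb{C}^\times$-gerbe $\alpha$ on $M$; since the two twists cancel in $R\SHom_{\sh{A}_M}(\sh{F},\sh{F})$ and $\alpha$ does not affect cotangent complexes, this plays no role below.)

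The first step is to construct an $\sh{A}_M$-linear Atiyah class $\at_{\sh{A}_M}(\sh{F})\in\Ext^1_{\sh{A}_M}\big(\sh{F},\sh{F}\otimes_{\sh{O}_{X\times M}}\cpx{L}_{X\times M}\big)$, obtained by applying the general Atiyah-class construction — via the derived diagonal, or via Illusie's theory for the ringed space $(X\times M,\sh{A}_M)$ — to the $\sh{A}_M$-module $\sh{F}$; because $\sh{A}$ is flat over $\sh{O}_X$ this is routine, e.g.\ using local finite locally free resolutions over $\sh{A}_M$. Composing with the projection $\cpx{L}_{X\times M}\to L\pi_M^*\cpx{L}_M$ afforded by the product decomposition $\cpx{L}_{X\times M}\simeq L\pi_X^*\cpx{L}_X\oplus L\pi_M^*\cpx{L}_M$ yields a class in $\Ext^1_{\sh{A}_M}(\sh{F},\sh{F}\otimes_{\sh{O}_{X\times M}}L\pi_M^*\cpx{L}_M)$. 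Using that $\sh{F}$ is perfect over $\sh{A}_M$ (so that $R\SHom_{\sh{A}_M}(\sh{F},-)$ commutes with $\otimes_{\sh{O}_{X\times M}}L\pi_M^*\cpx{L}_M$), then the adjunction between $L\pi_M^*$ and $R\pi_{M*}$, the projection formula for $\pi_M$, and finally perfectness of $\cpx{G}$, this class is transported through the canonical isomorphisms
\[
\Ext^1_{\sh{A}_M}(\sh{F},\sh{F}\otimes L\pi_M^*\cpx{L}_M)\;\cong\;\Hom_{D(M)}(\sh{O}_M,\cpx{G}\otimes\cpx{L}_M[1])\;\cong\;\Hom_{D(M)}(\cpx{G}^\vee[-1],\cpx{L}_M)
\]
into the sought morphism $\phi\colon\cpx{E}\to\cpx{L}_M$.

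It then remains to show $h^0(\phi)$ is an isomorphism and $h^{-1}(\phi)$ is an epimorphism, which I would check pointwise over Artinian bases, following the template of Huybrechts--Thomas adapted to $\sh{A}_M$-modules. Given a square-zero extension $T\hookrightarrow\bar T$ with ideal $J$ and a morphism $g\colon T\to M$ classifying a $T$-flat family $\sh{F}_T$ of $\sh{A}_T$-modules, the universal property of $M$ identifies the obstruction to lifting $g$ over $\bar T$ with the obstruction to extending $\sh{F}_T$ to a $\bar T$-flat $\sh{A}_{\bar T}$-module; the deformation theory of $\sh{A}_T$-modules — via Illusie applied to $(X\times T,\sh{A}_T)$, or a direct cocycle argument — places this obstruction in $\Ext^2_{\sh{A}_T}(\sh{F}_T,\sh{F}_T\otimes\pi_T^*J)$ and exhibits it from the relative $\sh{A}_T$-linear Atiyah class of $\sh{F}_T$. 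The point to prove is that this class equals the composite of the Kodaira--Spencer class of $g$ with $g^*\phi$; this, together with the analogous identification of tangent spaces, is a diagram chase in the derived category built on the functoriality of $\at_{\sh{A}_M}(-)$ under base change.

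The technical heart, and the step I expect to be the main obstacle, is exactly this last compatibility: establishing that the $\sh{A}_M$-linear Atiyah class is functorial under arbitrary base change — including to non-reduced bases and compatibly with the gerbe twist — and that it computes the deformation and obstruction classes of $\sh{A}_M$-modules with the correct normalisation. Once this is in place, the two conditions on $\phi$, and the perfectness of $\cpx{E}$, follow formally from the corresponding properties of $\cpx{G}$. Finally, since $\sh{F}$ is stable one has $\pi_{M*}\SHom_{\sh{A}_M}(\sh{F},\sh{F})\cong\sh{O}_M$, so $\cpx{E}$ has a nonzero $h^1$; this is accounted for by the scalar automorphisms of $\sh{F}$ (the stacky part of $\cpx{L}_M$), and it is removed upon replacing $R\SHom_{\sh{A}_M}(\sh{F},\sh{F})$ by its trace-free summand, whose Serre self-duality in the Calabi--Yau-$3$ case then makes $\cpx{E}$ a perfect, and symmetric, obstruction theory of amplitude $[-1,0]$ — the form used in the sequel.
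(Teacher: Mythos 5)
Your overall strategy coincides with the paper's: both construct the morphism from an $\sh{A}$-linear Atiyah class of the universal family, convert it into a map $\cpx{E}\to\cpx{L}_M$ by duality/adjunction (the paper uses Verdier duality for $\pi_M$ where you use the projection formula plus perfectness of $\cpx{G}$ --- these are interchangeable), and reduce the obstruction-theory axioms to the statement that the obstruction to extending an $\sh{A}$-module over a square-zero extension is the product of the Atiyah and Kodaira--Spencer classes. The paper establishes that last statement (Theorem~\ref{thm:at-cl}) by running the Huybrechts--Thomas argument at the level of Fourier--Mukai kernels, which is precisely what makes the base-change functoriality you single out as ``the technical heart'' automatic; your suggestion to instead invoke Illusie's theory for the ringed space $(X\times M,\sh{A}_M)$ could also work but is not the route taken.

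There is, however, a genuine gap in your verification step. You write that ``the universal property of $M$ identifies the obstruction to lifting $g$ over $\bar T$ with the obstruction to extending $\sh{F}_T$,'' but $M=M^{s,h}(X,\sh{A})$ is only a coarse moduli space: $\st{M}\to M$ is a possibly nontrivial $\mathbb{C}^*$-gerbe, so a morphism $T\to M$ does not classify a family and $M$ has no such universal property. The paper repairs this by choosing an \'{e}tale cover $U\to M$ trivializing the gerbe, proving (Lemma~\ref{lem:locally-fine}) that $U$ has the deformation-obstruction behaviour of a fine moduli space, establishing the obstruction theory on $U$, and then descending $R\SHom_{\sh{A}_U}(\sh{F},\sh{F})$ (which is untwisted) to $M$, using that being an obstruction theory is an \'{e}tale-local property. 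Your parenthetical that the twist ``plays no role'' covers the construction of $\cpx{E}$ but not this step. Separately, your closing remark about passing to the ``trace-free summand'' is not available in this setting: for a sheaf of non-commutative algebras there is no trace map on $R\SHom_{\sh{A}}(\sh{F},\sh{F})$, and the paper instead truncates to $\tau^{[1,2]}$, killing $h^0$ by stability and $h^3$ by CY3 Serre duality. That concerns the subsequent theorem rather than the present statement, but the mechanism you propose there would fail.
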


It is well-known that deformation-obstruction theory for objects in any abelian category are governed by their $\Ext^1$ and $\Ext^2$ groups. However, as pointed out in \cite{HT10}, a more explicit description for the obstruction classes is required to construct an obstruction theory for the moduli space.

\begin{theorem}[= Theorem~\ref{thm:at-cl}]
Let $S$ be a scheme, and $\sh{F}$ be a $\sh{O}_S$-flat coherent $\sh{A}_S$-module on $X\times S$. There exists a natural class
\[
\at_{\sh{A}}(\sh{F})\in\Ext^1_{\sh{A}_S}(\sh{F},\sh{F}\otimes\pi^*_S\cpx{L}_S),
\]
called the Atiyah class, such that for any square-zero extension $S\subset\overline{S}$ with ideal sheaf $I$, an $\sh{A}$-module extension of $\sh{F}$ over $\overline{S}$ exists if and only if the obstruction class
\[
\ob=\Big(\sh{F}\xrightarrow{\at_{\sh{A}}(\sh{F})}\sh{F}\otimes\pi_S^*\cpx{L}_S[1]\xrightarrow{\id_{\sh{F}}\otimes\pi_S^*\kappa(S/\overline{S})[1]}\sh{F}\otimes \pi_S^* I[2]\Big)\in\Ext^2_{\sh{A}_S}(\sh{F},\sh{F}\otimes\pi_S^* I)
\]
vanishes, where $\kappa(S/\overline{S})\in\Ext^1_S(S,I)$ is the Kodiara--Spencer class for the extension $S\subset\overline{S}$. Moreover, if an extension of $\sh{F}$ over $\overline{S}$ exists, then all (equivalence classes of) extensions form an affine space over $\Ext^1_{\sh{A}}(\sh{F},\sh{F}\otimes\pi_S^*I)$.
\end{theorem}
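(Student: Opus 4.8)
The plan is to construct the Atiyah class by the standard jet-sequence / first-order neighborhood approach, adapted to the sheaf-of-algebras setting, and then to identify the obstruction class by unwinding the deformation theory of $\sh{A}$-modules over a square-zero extension. First I would recall the construction of the Atiyah class for a coherent $\sh{A}_S$-module $\sh{F}$ on $Y := X \times S$, flat over $S$. One takes the algebra $\sh{A}_S \otimes_{\sh{O}_S} \sh{A}_S^{\op}$ acting on $\sh{A}_S$ (more precisely, one works relative to $S$, i.e. with $\sh{A}_S$-bimodules that are $\sh{O}_S$-central), lets $\sh{I}_\Delta$ denote the kernel of the multiplication map $\mu\colon \sh{A}_S \otimes_{\sh{O}_S} \sh{A}_S \to \sh{A}_S$, and forms the first-order jet bimodule $\sh{P}^1_{\sh{A}_S/\sh{O}_S} := (\sh{A}_S \otimes_{\sh{O}_S} \sh{A}_S)/\sh{I}_\Delta^2$. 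This fits into a short exact sequence of $\sh{A}_S$-bimodules $0 \to \Omega^1_{\sh{A}_S/\sh{O}_S} \to \sh{P}^1_{\sh{A}_S/\sh{O}_S} \to \sh{A}_S \to 0$, where $\Omega^1_{\sh{A}_S/\sh{O}_S} = \sh{I}_\Delta/\sh{I}_\Delta^2$ is the noncommutative Kähler differentials relative to $S$. Tensoring with $\sh{F}$ over $\sh{A}_S$ on the right and taking the connecting homomorphism in $\Ext^1_{\sh{A}_S}(\sh{F}, -)$ applied to $\id_{\sh{F}}$ produces a class in $\Ext^1_{\sh{A}_S}(\sh{F}, \sh{F} \otimes_{\sh{A}_S} \Omega^1_{\sh{A}_S/\sh{O}_S})$; composing with the natural map $\Omega^1_{\sh{A}_S/\sh{O}_S} \to \Omega^1_{X\times S/S}\otimes_{\sh{O}_{X\times S}} \sh{A}_S$ (pulling back differentials from the center, or rather from $\sh{O}_{X\times S}$) and then with $\pi_S^*\cpx{L}_S \to \Omega^1_{X \times S / S}$ — actually one wants to land in $\sh{F} \otimes \pi_S^* \cpx{L}_S$, so I would instead use the cotangent complex of $S$ directly — gives $\at_{\sh{A}}(\sh{F}) \in \Ext^1_{\sh{A}_S}(\sh{F}, \sh{F} \otimes \pi_S^*\cpx{L}_S)$. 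Naturality in $S$ is then formal from the functoriality of the jet construction and of $\cpx{L}_S$.

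Next I would set up the obstruction-theoretic statement. Given a square-zero extension $S \subset \overline{S}$ with ideal sheaf $I$, I want to show that extensions of $\sh{F}$ to an $\sh{O}_{\overline S}$-flat $\sh{A}_{\overline S}$-module $\overline{\sh{F}}$ are controlled by $\Ext^i_{\sh{A}_S}(\sh{F}, \sh{F} \otimes \pi_S^* I)$ for $i = 0, 1, 2$ in the usual way: obstruction in $\Ext^2$, torsor under $\Ext^1$ when nonempty, automorphisms $\Ext^0$. The cleanest route is the one from \cite{HT10}: rather than re-deriving the whole deformation theory, I would compare with the Atiyah class. Concretely, the Kodaira–Spencer class $\kappa(S/\overline S) \in \Ext^1_S(\cpx{L}_S, I)$ classifies the extension $S \subset \overline{S}$ via the transitivity triangle $\pi_S^*\cpx{L}_S \to \cpx{L}_{X\times S} \to \cpx{L}_{X\times S/X}$ — or more precisely I'd use that $\overline{S}$ corresponds to a morphism $\cpx{L}_S \to I[1]$. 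The composite displayed in the theorem, $\sh{F} \xrightarrow{\at_{\sh{A}}(\sh{F})} \sh{F} \otimes \pi_S^*\cpx{L}_S[1] \to \sh{F} \otimes \pi_S^* I[2]$, is then an element of $\Ext^2_{\sh{A}_S}(\sh{F}, \sh{F} \otimes \pi_S^* I)$, and the content is that it is exactly the classical obstruction to extending $\sh{F}$.

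To prove this identification I would follow the argument that the Atiyah class, viewed as a map to the truncated jet object, splits the relevant sequence precisely when a flat extension exists — i.e. reduce to the universal case where $\overline{S}$ is the trivial square-zero extension of $S$ by $\cpx{L}_S$ (the "cone" extension), where the extension $\overline{\sh{F}}$ tautologically exists, and then observe both classes transform the same way under the map $\cpx{L}_S \to I[1]$ classifying a general $\overline{S}$, by naturality of $\at_{\sh{A}}$ established in the previous step together with the compatibility of the jet sequence with base change. The torsor statement — that the set of extensions, when nonempty, is an affine space over $\Ext^1_{\sh{A}_S}(\sh{F}, \sh{F} \otimes \pi_S^* I)$ — is the standard computation: two extensions differ by a derivation-type cocycle, and one checks the obstruction against gluing local extensions lives in $\Ext^2$ while the ambiguity lives in $\Ext^1$, working locally on $X \times S$ where $\sh{F}$ has a finite locally free $\sh{A}_S$-resolution (here one uses that $\sh{A}$ is coherent and, being finite over the center, that such resolutions exist locally). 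The main obstacle I anticipate is purely the noncommutative bookkeeping: making sure all tensor products and $\SHom$'s are taken over the right ring ($\sh{A}_S$ versus $\sh{O}_{X\times S}$ versus $\sh{O}_S$), that "bimodule" means $\sh{O}_S$-central bimodule throughout, and that the jet sequence for $\sh{A}_S$ relative to $\sh{O}_S$ behaves well under the base change $S \to \overline{S}$ — none of this is deep, but the analogue of "$\Omega^1$ of a smooth morphism is locally free" fails for $\sh{A}_S/\sh{O}_{X\times S}$, so one must be careful to phrase everything in terms of the genuinely flat/perfect object $\pi_S^*\cpx{L}_S$ and the cotangent complex, exactly as the statement does, rather than naively imitating the smooth case.
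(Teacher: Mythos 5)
Your overall strategy (define an Atiyah class, show the obstruction is its product with the Kodaira--Spencer class, get the torsor statement from cocycle arithmetic) matches the paper's, but two of your key steps have real gaps. First, the construction of $\at_{\sh{A}}(\sh{F})$ via the jet sequence $0\to\sh{I}_\Delta/\sh{I}_\Delta^2\to\sh{P}^1_{\sh{A}_S/\sh{O}_S}\to\sh{A}_S\to 0$ \emph{relative to} $\sh{O}_S$ is aimed at the wrong target: by construction it produces a class valued in $\sh{F}\otimes_{\sh{A}_S}\Omega^1_{\sh{A}_S/\sh{O}_S}$, i.e.\ it sees only the $X$- and noncommutative-fibre directions and kills exactly the $S$-direction that the theorem requires ($\sh{F}\otimes\pi_S^*\cpx{L}_S$). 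Your proposed comparison map $\Omega^1_{\sh{A}_S/\sh{O}_S}\to\Omega^1_{X\times S/S}\otimes\sh{A}_S$ also points the wrong way (the cotangent sequence for $\sh{O}_{X\times S}\to\sh{A}_S$ gives a map into, not out of, $\Omega^1_{\sh{A}_S/\sh{O}_S}$), and ``use the cotangent complex of $S$ directly'' is precisely the construction that is missing, especially for singular $S$. The paper sidesteps all of this by working with Fourier--Mukai kernels on $S\times S$: the truncated universal Atiyah class $\alpha_S\in\Ext^1_{S\times S}(\Delta_*\sh{O}_S,\Delta_*\cpx{L}_S)$ of Huybrechts--Thomas is pulled back along $\pi_{S\times S}$ and applied to $\sh{F}$ through the FM transform; since the kernels live entirely over $S\times S$, the resulting class automatically lands in $\Ext^1_{\sh{A}_S}(\sh{F},\sh{F}\otimes\pi_S^*\cpx{L}_S)$ and $\sh{A}$-linearity is free.

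Second, and more seriously, your argument does not establish the ``vanishing implies existence'' direction. The split square-zero extension admits the trivial lift and has zero obstruction, so reducing to it and transporting along the classifying map $\cpx{L}_S\to I[1]$ can only show that the product class is \emph{a} natural class compatible with base change; it cannot show that this class \emph{detects} non-existence for a non-split $\overline{S}$. What is actually needed is an identification of the product class with the classical obstruction cocycle known to control existence. The paper does this by (a) characterizing the classes $e\in\Ext^1_{\sh{A}_{\overline{S}}}(Ri_*\sh{F},Ri_*(\sh{F}\otimes\pi_S^*I))$ that come from flat extensions, (b) reducing to the affine case --- which requires an explicit justification, since existence of deformations is not a local property, but the two defining conditions on $e$ are --- and (c) choosing a free resolution $A_R^{\oplus n_\bullet}\to M$, lifting the differentials to $A_{\overline{R}}^{\oplus n_\bullet}$, and showing the resulting $2$-cocycle $\{\ob_\bullet\}$ represents $-\delta(\pi_{\sh{F}})$. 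The same resolution-level computation is what gives the affine-space structure on the set of extensions; your C\v{e}ch-style ``glue local extensions'' phrasing would need the same care, since even local deformations of a coherent $\sh{A}$-module can be obstructed.
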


Assume that $\sh{A}$ is Calabi--Yau-$3$ in the sense that $\Coh(\sh{A})$ is a Calabi--Yau-3 category. For example, the quantum Fermat quintic $\qgr(A)$ will give such $X$ and $\sh{A}$. In this case, we construct a symmetric bilinear form
\[
\theta:(R\pi_{M*}\cpx{E})^{\vee}[-1]\to R\pi_{M*}\cpx{E}\,[2]=\Big((R\pi_{M*}\cpx{E})^{\vee}[-1]\Big)^{\vee}[1].
\]
This leads to the following:

\begin{theorem}[= Theorem~\ref{thm:sym_ob}]
The truncation of the obstruction theory $\cpx{E}\to\cpx{L}_M$ for $M$ induces a symmetric obstruction theory
\[
\Big(\tau^{[1,2]}R\pi_{M*}R\SHom_{\sh{A}_M}(\sh{F},\sh{F})\Big)^{\vee}[-1]\to\cpx{L}_M,
\]
which in particular is a perfect obstruction theory.
\end{theorem}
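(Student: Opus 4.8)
The plan is to follow the now-standard argument of Huybrechts--Thomas (and its predecessors for coherent sheaves) for turning an obstruction theory together with a Serre-duality pairing into a symmetric obstruction theory. The starting data are: the obstruction theory $\cpx{E}\to\cpx{L}_M$ of the first displayed theorem, where $\cpx{E}=(R\pi_{M*}R\SHom_{\sh{A}_M}(\sh{F},\sh{F}))^{\vee}[-1]$; the Calabi--Yau-$3$ hypothesis on $\sh{A}$, which furnishes a functorial trace-type isomorphism $R\SHom_{\sh{A}}(\sh{G},\sh{G})\simeq R\SHom_{\sh{A}}(\sh{G},\sh{G})^{\vee}[-3]$ on $\Coh(\sh{A})$ (equivalently, $\Ext^i_{\sh{A}}(\sh{G},\sh{H})\cong\Ext^{3-i}_{\sh{A}}(\sh{H},\sh{G})^{\vee}$); and the symmetric bilinear form $\theta$ already stated in the excerpt. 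The goal is to show that truncating $\cpx{E}$ to amplitude $[-1,0]$ — equivalently, replacing $R\SHom_{\sh{A}_M}(\sh{F},\sh{F})$ by $\tau^{[1,2]}R\SHom_{\sh{A}_M}(\sh{F},\sh{F})$ inside the dual — still yields an obstruction theory, and that $\theta$ descends to a nondegenerate symmetric form on this two-term complex, so that it is a symmetric (hence perfect) obstruction theory in the sense of Behrend--Fantechi.

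First I would record the self-duality of $\cpx{E}$ up to shift. Applying relative Serre duality for the proper morphism $\pi_M\colon X\times M\to M$ together with the CY$_3$ isomorphism for $\sh{A}$ (which is the algebraic input replacing the smooth CY$_3$ case; here one uses that $\sh{A}$ is a finite $\sh{O}_X$-algebra on the smooth projective $X\cong\mathbb{P}^3$ and that $\Coh(\sh{A})$ is CY$_3$, so that the relative dualizing complex contribution is trivialized), one gets a quasi-isomorphism $R\pi_{M*}R\SHom_{\sh{A}_M}(\sh{F},\sh{F})\simeq (R\pi_{M*}R\SHom_{\sh{A}_M}(\sh{F},\sh{F}))^{\vee}[-3]$, i.e. $\cpx{E}\simeq\cpx{E}^{\vee}[-1]$ after the shift and dualization in the definition of $\cpx{E}$; this is precisely the content of the form $\theta$ being an isomorphism. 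Next, one analyzes the cohomology sheaves. The complex $R\SHom_{\sh{A}_M}(\sh{F},\sh{F})$ has cohomology in degrees $0,1,2,3$ (the $\Ext^i$ sheaves, $0\le i\le 3$), with $\sh{E}xt^0=\sh{O}$-line part coming from homotheties and $\sh{E}xt^3$ its Serre dual, a line bundle (here one uses stability to get that fiberwise $\Hom_{\sh{A}}(\sh{F},\sh{F})=\mathbb{C}$). The self-duality interchanges degrees $i$ and $3-i$, so removing the "trace" summand $\sh{O}$ in degree $0$ and its dual in degree $3$ leaves the truncation $\tau^{[1,2]}$, which is self-dual in the appropriate sense: $(\tau^{[1,2]}R\SHom)^{\vee}[-3]\simeq\tau^{[1,2]}R\SHom$. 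Dualizing and shifting, $\cpx{E}^{\mathrm{red}}:=(\tau^{[1,2]}R\pi_{M*}R\SHom_{\sh{A}_M}(\sh{F},\sh{F}))^{\vee}[-1]$ is a perfect complex of amplitude $[-1,0]$ carrying an induced isomorphism $\cpx{E}^{\mathrm{red}}\simeq(\cpx{E}^{\mathrm{red}})^{\vee}[-1]$, which is the nondegenerate symmetric form required by Behrend--Fantechi's definition of a symmetric obstruction theory; one must check the symmetry of this isomorphism, which follows from the symmetry of $\theta$ (or from functoriality of the trace pairing, as in Huybrechts--Thomas).

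The remaining point is that passing from $\cpx{E}$ to $\cpx{E}^{\mathrm{red}}$ does not destroy the obstruction-theory property. Because $\cpx{E}$ has amplitude $[-1,\infty)$ — more precisely $R\SHom$ lives in degrees $[0,3]$ so $\cpx{E}=(\cdots)^{\vee}[-1]$ lives in degrees $[-1,2]$ — while $\cpx{L}_M$ is concentrated in degrees $\le 0$, the map $\cpx{E}\to\cpx{L}_M$ factors canonically through $\tau^{\ge -1}\cpx{E}$, and then through the truncation that kills the degree-$2$ part dual to $\sh{E}xt^0$; the morphism on $h^0$ and $h^{-1}$ is unchanged, so the two conditions defining an obstruction theory ($h^0$ iso, $h^{-1}$ epi) are inherited verbatim. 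Concretely, deleting the homothety line and its dual from $R\SHom_{\sh{A}_M}(\sh{F},\sh{F})$ is the same operation as replacing it by $\tau^{[1,2]}$, and this affects only $h^2$ and $h^3$ of that complex, i.e. $h^{-2}$ and $h^{-3}$ of $\cpx{E}$ — degrees irrelevant to being an obstruction theory. Finally, a two-term obstruction theory of amplitude $[-1,0]$ is by definition a perfect obstruction theory, giving the last clause.

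**The main obstacle** I anticipate is not the truncation bookkeeping but establishing the relative CY$_3$ duality and the symmetry of $\theta$ with full functoriality in families — i.e. checking that the trace map $R\SHom_{\sh{A}_M}(\sh{F},\sh{F})\to\sh{O}$ and the resulting pairing are compatible with base change and that the induced isomorphism $\theta$ is genuinely symmetric (not just an isomorphism), since a non-symmetric self-duality would not suffice for Behrend--Fantechi's machinery and the Behrend-function/weighted-Euler-characteristic consequences. This is where the CY$_3$ structure on $\Coh(\sh{A})$ must be used carefully: one needs the Serre functor to be (functorially isomorphic to) the shift $[3]$, compatibly with the $\sh{O}_X$-linear and $\sh{A}$-linear structures, so that the Yoneda/composition pairing $\Ext^i\times\Ext^{3-i}\to\Ext^3\to\mathbb{C}$ is $(-1)$-graded-symmetric in the appropriate sense; I would cite or adapt the treatment in Huybrechts--Thomas for the sheaf case, with the sheaf of algebras $\sh{A}$ carried along, and the already-constructed Atiyah class of Theorem~\ref{thm:at-cl} guaranteeing that the form $\theta$ is compatible with the map to $\cpx{L}_M$.
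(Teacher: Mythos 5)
Your overall strategy matches the paper's: use stability to identify the degree-zero part of $R\pi_{M*}R\SHom_{\sh{A}_M}(\sh{F},\sh{F})$ with $\sh{O}_M$ via the scalar map, use the Calabi--Yau-3 form $\theta$ to handle degree $3$ dually, conclude that the truncation $\tau^{[1,2]}$ has two-term amplitude, and observe that the obstruction-theory axioms only see $h^0$ and $h^{-1}$ of $\cpx{E}$, which the truncation does not touch. All the correct inputs are identified (stability giving $\Hom_{\sh{A}}(\sh{F}_m,\sh{F}_m)=\mathbb{C}$, Serre duality giving $\Ext^3$ as its dual, symmetry of $\theta$).

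The gap is in the step where you declare $\cpx{E}^{\mathrm{red}}=\bigl(\tau^{[1,2]}R\pi_{M*}R\SHom_{\sh{A}_M}(\sh{F},\sh{F})\bigr)^{\vee}[-1]$ to be ``a perfect complex of amplitude $[-1,0]$'': truncations of perfect complexes are not perfect of prescribed amplitude in general (e.g.\ $\tau^{\geq 1}$ of a two-term complex of line bundles is a cokernel sheaf, typically not locally free), so this is exactly the point that requires an argument, and it is the main content of the paper's proof. Relatedly, you describe the truncation as ``removing the trace summand $\sh{O}$ in degree $0$ and its dual in degree $3$'', which presupposes a direct-sum splitting; for a noncommutative $\sh{A}$ there is no reduced trace (the paper stresses the absence of trace and determinant elsewhere), so no such splitting is available. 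The paper's proof resolves both issues simultaneously: the cone of the scalar map $\sigma:\sh{O}_M\to R\pi_{M*}\cpx{F}$ is identified with $\tau^{\geq 1}R\pi_{M*}\cpx{F}$ because $\sigma$ is a fiberwise isomorphism onto $h^0$ by stability, and the cocone of $\sigma^{\vee}[-3]\circ\theta^{-1}[-2]:\tau^{\geq 1}R\pi_{M*}\cpx{F}\to\sh{O}_M[-3]$ is identified with $\tau^{[1,2]}R\pi_{M*}\cpx{F}$ because the dual map is a fiberwise isomorphism on $h^3$ by Serre duality; an iterated cone on maps of perfect complexes is perfect, and the amplitude count follows. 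Your degree bookkeeping is also off in places ($\cpx{E}$ sits in degrees $[-2,1]$, not $[-1,2]$, and the truncation deletes $h^{1}$ and $h^{-2}$ of $\cpx{E}$, not $h^{-2}$ and $h^{-3}$); since neither deleted degree is $0$ or $-1$, your conclusion that the obstruction-theory conditions survive the truncation still stands.
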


Therefore the moduli space $M$ carries a degree zero virtual fundamental class $[M]_{\vir}\in A_0(X)$. If $M$ is projective (i.e. there is no strictly semistable $\sh{A}$-modules), then this allows us to define the Donaldson--Thomas type invariant via integration over $[M]_{\vir}$
\[
\DT(M):=\int_{[M]^{\vir}} 1,
\]
which is equal to the weighted Euler characteristic $\chi(M,\nu_M)$, as in \cite{Beh09}.

Next, we recall that classical Donaldson--Thomas theory is a virtual count of curves on a (Calabi--Yau) 3-fold. This is done by realizing the Hilbert schemes of curves are isomorphic to (components) of moduli spaces of stable sheaves \emph{with fixed determinant}. For a general sheaf $\sh{A}$ of non-commutative algebras, we do not have the notion of determinant or trace, but we can show a similar result holds under suitable assumptions.

\begin{theorem}[= Proposition~\ref{prop:dt-on-hilb}]
Assume that the stalk $\sh{A}_{\eta}$ at generic point $\eta\in X$ is simple, and $H^1(X,\sh{A})=0$. Then for any polynomial $h$ with degree $\leq 1$, there exists an open immersion
\[
\Hilb^h(X,\sh{A})\to M^{s,h_0-h}(X,\sh{A}),
\]
where $h_0$ is the Hilbert polynomial of $\sh{A}$.
\end{theorem}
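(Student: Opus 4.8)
The plan is to mimic the classical argument (due to Thomas, and as developed in Huybrechts--Thomas \cite{HT10}) that identifies a Hilbert scheme of subschemes with an open subscheme of a moduli space of stable sheaves with fixed determinant, replacing the role of $\sh{O}_X$ by the sheaf $\sh{A}$. Given a closed subscheme $Z\subset X$ with structure sheaf $\sh{O}_Z$ such that $\sh{A}\otimes_{\sh{O}_X}\sh{O}_Z$ has Hilbert polynomial $h$, I would send the quotient $\sh{A}\twoheadrightarrow \sh{A}_Z:=\sh{A}\otimes_{\sh{O}_X}\sh{O}_Z$ to its kernel $\sh{I}_Z:=\ker(\sh{A}\to\sh{A}_Z)$, which is a left $\sh{A}$-submodule of $\sh{A}$ and hence an $\sh{A}$-module with Hilbert polynomial $h_0-h$. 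So the first step is to verify that this rule $Z\mapsto \sh{I}_Z$ (more precisely, its family version over an arbitrary base $S$) defines a morphism of functors $\Hilb^h(X,\sh{A})\to \M^{h_0-h}(X,\sh{A})$, using flatness over $S$ to see that kernels behave well in families and that Hilbert polynomials are as claimed.

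The second step is to show that the image lands in the stable locus and that $\sh{I}_Z$ is actually stable (not merely semistable), so that the target is $M^{s,h_0-h}$. For this I would use the hypothesis that $\sh{A}$ is torsion-free (which follows from $\sh{A}_\eta$ being simple, in particular nonzero and a domain-like object at the generic point, so $\sh{A}$ has no subsheaf supported in lower dimension) together with the fact that $\sh{I}_Z\subset\sh{A}$, so any $\sh{A}$-submodule of $\sh{I}_Z$ is an $\sh{A}$-submodule of $\sh{A}$; since $\sh{A}$ is a pure sheaf of the same dimension as $X$ and $\sh{A}_\eta$ is simple, one checks that $\sh{A}$ itself is stable (as an $\sh{A}$-module), and then a sub-$\sh{A}$-module of a stable module of the same reduced Hilbert polynomial lead, after comparing leading terms, to strict inequalities on reduced Hilbert polynomials. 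The key point is that $\sh{I}_Z$ and $\sh{A}$ have the same reduced Hilbert polynomial (both equal to $p_{\sh{A}}$ up to lower-order terms, since $h$ has degree $\leq 1 = \dim X - 2$), so stability of $\sh{A}$ forces stability of $\sh{I}_Z$.

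The third step is to show that the morphism $\Hilb^h(X,\sh{A})\to M^{s,h_0-h}(X,\sh{A})$ is an open immersion. Here I would argue as in \cite{HT10}: a morphism of schemes is an open immersion if and only if it is a monomorphism, flat, and of finite type (equivalently, étale and a monomorphism). Monomorphism: given a flat family of stable $\sh{A}$-modules $\sh{F}$ over $S$ whose fibers are of the form $\sh{I}_Z$, I must recover $Z$ functorially; the point is that $\sh{F}\hookrightarrow \pi_X^*\sh{A}$ is recovered as the unique (up to the nowhere-vanishing scalars, which is where $H^1(X,\sh{A})=0$ and $\Hom(\sh{F},\sh{A})$ being a line enters) nonzero map $\sh{F}\to\sh{A}$, and then $Z=\supp(\operatorname{coker})$ with its natural scheme structure. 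The condition $H^1(X,\sh{A})=0$ is used to ensure the relevant $\Hom$-sheaf on the base is a line bundle and that the canonical section extends, exactly as the determinant/trace-free conditions are used classically. Étaleness: I would identify the deformation and obstruction spaces of the Hilbert scheme with those of the moduli space under this map, i.e. show $\Ext^i_{\sh{A}}(\sh{I}_Z,\sh{I}_Z)_0 \cong \Ext^i$ of the Hilbert scheme for $i=1,2$ via the long exact sequence coming from $0\to\sh{I}_Z\to\sh{A}\to\sh{A}_Z\to 0$ together with the vanishing $\Hom(\sh{A},\sh{A})=\mathbb{C}$, $\Ext^{>0}(\sh{A},\sh{A})$ being controlled appropriately, and $H^1(X,\sh{A})=0$.

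The main obstacle I expect is the third step, specifically pinning down the monomorphism/open-immersion property: in the commutative case one has the trace map and the fixed-determinant condition doing the bookkeeping, and here one must replace that entire apparatus by the hypotheses ``$\sh{A}_\eta$ simple'' and ``$H^1(X,\sh{A})=0$'' and check that they suffice to (a) make $\Hom(\sh{I}_Z,\sh{A})$ one-dimensional with a canonical generator recovering the inclusion, and (b) reconstruct the subscheme $Z$ and its flat family from the cokernel in a way that is functorial over the base. Establishing that $\sh{A}$ is a stable $\sh{A}$-module from simplicity of the generic stalk is a small technical point but needs care with the definition of stability for $\sh{A}$-modules (reduced Hilbert polynomials of $\sh{A}$-submodules, not $\sh{O}_X$-submodules). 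Once openness and the identification of tangent-obstruction data are in place, the statement follows.
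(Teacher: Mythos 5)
Your overall skeleton (send a quotient to its kernel, check the kernel is stable, then prove the map is injective and \'{e}tale) is the same as the paper's, but two of the three steps have genuine gaps. First, a framing issue: $\Hilb^h(X,\sh{A})$ parametrizes arbitrary $\sh{A}$-module quotients $\sh{A}\twoheadrightarrow\sh{F}$ with $p_X(\sh{F})=h$, not quotients of the form $\sh{A}\otimes_{\sh{O}_X}\sh{O}_Z$ for a closed subscheme $Z\subset X$; so ``reconstructing $Z$ from the cokernel'' is not the right target for the monomorphism step. Second, the stability argument as you state it does not work. Under the hypothesis that $\sh{A}_{\eta}$ is merely simple, $\sh{A}$ is in general \emph{not} stable as a left module over itself: for an Azumaya algebra of rank $n^2>1$ (an example the paper explicitly allows), $\sh{A}$ is generically a direct sum of $n$ isomorphic simple modules, hence at best polystable. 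Moreover, even granting stability of $\sh{A}$, stability of a full-rank submodule $\sh{I}\subset\sh{A}$ does not follow formally: for $\sh{G}\subset\sh{I}$ one must compare $\sh{G}$ with $\sh{I}$, not with $\sh{A}$. The paper's Lemma~\ref{lem:dt-on-hilb} actually assumes $\sh{A}_{\eta}$ is a \emph{division algebra}; then every nonzero $\sh{A}$-submodule $\sh{G}\subset\sh{I}\subset\sh{A}$ has $\sh{G}_{\eta}=\sh{A}_{\eta}$, hence full rank, and $p_X(\sh{G})\leq p_X(\sh{I})$ with equality only if $\sh{G}=\sh{I}$, which gives stability of $\sh{I}$ directly.

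Third, and most importantly, the mechanism that makes both injectivity and \'{e}taleness work is the canonical identification $\sh{I}^{\vee\vee}\cong\sh{A}^{\vee\vee}\cong\sh{A}$, valid because $\deg h\leq 1$ forces the cokernel to be supported in codimension $\geq 2$. The paper recovers the quotient from the abstract module $\sh{I}$ as $\sh{I}\to\sh{I}^{\vee\vee}\cong\sh{A}$, and proves \'{e}taleness by showing that any $\sh{A}$-module extension $\overline{\sh{I}}$ over a square-zero thickening yields $0\to\overline{\sh{I}}\to\overline{\sh{I}}^{\vee\vee}\to\overline{\sh{F}}\to 0$ with $\overline{\sh{I}}^{\vee\vee}$ a deformation of $\sh{A}_S$, which must be the trivial one because $\Ext^1_{\sh{A}}(\sh{A},\sh{A})=H^1(X,\sh{A})=0$; this converts deformations of the kernel into deformations of the quotient and back. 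Your proposed substitute --- that $\Hom_{\sh{A}}(\sh{I},\sh{A})$ is a line with a canonical generator, ensured by $H^1(X,\sh{A})=0$ --- is not justified: establishing one-dimensionality already requires the reflexivity/codimension-$2$ input above, and ``unique up to scalar'' would additionally require control of $\Aut_{\sh{A}}(\sh{A})\cong H^0(X,\sh{A})^{\times}$ (which acts by right multiplication and need not consist of scalars), something $H^1(X,\sh{A})=0$ does not give. Your tangent--obstruction comparison via the long exact sequence of $0\to\sh{I}\to\sh{A}\to\sh{F}\to 0$ is the linearization of the correct argument, but without the double-dual construction you have no map of deformation functors to linearize.
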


In general, $M=M^{s,h_0-h}(X,\sh{A})$ may not be projective, but the Hilbert scheme $\Hilb^h(X,\sh{A})$ is always projective. Then the symmetric obstruction theory of $M$ pulls back to a symmetric obstruction theory on $\Hilb^p(X,\sh{A})$, which allows us to define Donaldson--Thomas invariants via integrations
\[
\DT^h(X,\sh{A}):=\int_{[\Hilb^{h}(X,\sh{A})]_{\vir}}1=\chi(\Hilb^{h}(X,\sh{A}),\nu).
\]

\subsection*{Acknowledgements} I would like to thank my advisor Kai Behrend for introducing me to this topic, and for his consistent guidance and support. I would also like to thank Toni Annala, Jim Bryan, Nina Morishige and Stephen Pietromonaco for many helpful conversations.

\subsection*{Notation} All schemes or algebras are separated and noetherian over $\mathbb{C}$. All (sheaves of) algebras are associative and unital. By non-commutative, we mean not necessarily commutative, and we assume that non-commutative rings are both left and right noetherian. For a non-commutative ring $A$, a $A$-module is always a left $A$-module. We will use $A^{\op}$-module to denote right $A$-module. All rings without specified non-commutative are commutative.

\section{Quantum Fermat quintic threefolds}\label{sec:ncps-to-ncv}

\subsection{Non-commutative projective schemes}

We first review the notion of non-commutative projective schemes defined by Artin and Zhang (\cite{AZ94}).

Let $A$ be a locally finite $\mathbb{Z}_{\geq 0}$-graded $\mathbb{C}$-algebra, by which we mean $A=\oplus_{i\geq 0}A_i$ and each $A_i$ is finite-dimensional. We define
\[
\qgr(A)=\gr(A)/\tor(A)
\]
to be the quotient abelian category, where $\gr(A)$ is the category of finitely-generated graded $A$-modules, and $\tor(A)$ is its Serre subcategory consisting of torsion modules, here a graded $A$-module $M$ is said to be torsion if each element $m\in M$ is annihilated by $A_{\geq n}$ for some $n$.

\begin{definition}[\cite{AZ94}]
The \emph{non-commutative projective scheme} defined by $A$ is a triple
\[
(\qgr(A),\tail{A},[1]),
\]
where $\tail{A}$ is the object in $\qgr(A)$ corresponding to $A$ as an $A$-module, and $[1]$ is the functor induced by sending any graded module $M$ to $M[1]$ defined by $(M[1])_i=M_{i+1}$.
\end{definition}

More generally, we consider a triple $(\cat{C},\tail{O},s)$ of an abelian category $\cat{C}$, an object $\tail{O}$, and a natural equivalence $s:\cat{C}\to\cat{C}$. A morphism $(\cat{C}_1,\tail{O}_1,s_1)\to(\cat{C}_2,\tail{O}_2,s_2)$ between two such triples consists of a functor $F:\cat{C}_1\to\cat{C}_2$, an isomorphism $F(\tail{O}_1)\cong\tail{O}_2$, and a natural isomorphism $s_2\circ F\cong F\circ s_1$. This morphism is an isomorphism if $F$ is an equivalence of categories.

\begin{definition}[\cite{AZ94}]
A non-commutative projective scheme is a triple $(\cat{C},\tail{O},s)$ which is isomorphic to $(\qgr(A),\tail{A},[1])$ for some graded algebra $A$.
\end{definition}

For example, if $A$ is commutative and generated by degree $1$ elements, then by Serre's theorem, $(\qgr(A),\tail{A},[1])$ is isomorphic to the triple $(X,\sh{O}_X,-\otimes\sh{O}_X(1))$, where $X=\Proj(A)$.

More generally, let $X$ be a (smooth) projective variety with a polarization $\sh{O}_X(1)$, and $\sh{A}$ a coherent sheaf of non-commutative $\sh{O}_X$-algebras on $X$. We may consider the \emph{homogeneous coordinate ring} of $(X,\sh{A})$
\[
A=\bigoplus_{n=0}^{\infty} H^0(X,\sh{A}(n)),
\]
which is naturally a graded algebra via the multiplication $\sh{A}\otimes\sh{A}\to\sh{A}$.

\begin{proposition}
The triple $(\Coh(\sh{A}),\sh{A},-\otimes\sh{O}_X(1))$ is isomorphic to the non-commutative projective scheme defined by the graded algebra $A$. In particular, there is an equivalence of (abelian) categories $\Coh(\sh{A})\cong\qgr(A)$.
\end{proposition}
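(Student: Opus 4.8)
The plan is to verify the conditions of Artin--Zhang's theorem relating $\qgr$ of a graded algebra to $\Coh$ of an associated sheaf, generalizing Serre's theorem from the commutative case. First I would recall that $A = \bigoplus_{n\geq 0} H^0(X,\sh{A}(n))$ is a locally finite $\mathbb{Z}_{\geq 0}$-graded $\mathbb{C}$-algebra: since $\sh{A}$ is a coherent $\sh{O}_X$-module, each $\sh{A}(n)$ is coherent, so each $H^0(X,\sh{A}(n))$ is finite-dimensional, and the ring structure comes from the multiplication $\sh{A}\otimes_{\sh{O}_X}\sh{A}\to\sh{A}$ composed with cup product on cohomology. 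The key observation is that $\sh{A}$ is a sheaf of algebras over $\sh{O}_X$ that is coherent as an $\sh{O}_X$-module, and $\sh{O}_X(1)$ restricts to an ample line bundle on the underlying scheme $X$; this is exactly the setting where Serre's vanishing and global generation theorems apply to all coherent $\sh{A}$-modules (regarded as coherent $\sh{O}_X$-modules with extra structure).

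The main steps would be: (1) Define the functor $\Gamma_*\colon \Coh(\sh{A})\to\qgr(A)$ by $\sh{M}\mapsto \bigoplus_{n\geq 0}H^0(X,\sh{M}(n))$, which is naturally a graded $A$-module; and in the other direction a functor $\widetilde{(-)}\colon\gr(A)\to\Coh(\sh{A})$ sending a graded $A$-module $M$ to the coherent $\sh{A}$-module obtained by the usual sheafification (localization in degree $0$), using that $B:=\bigoplus_n H^0(X,\sh{O}_X(n))$ has $X=\Proj B$ and $A$ is a graded $B$-algebra, so $\widetilde{M}$ is first a quasi-coherent $\sh{O}_X$-module and then acquires an $\sh{A}=\widetilde{A}$-module structure. (2) Check that $\widetilde{(-)}$ kills $\tor(A)$, hence descends to $\qgr(A)\to\Coh(\sh{A})$, and that it is exact. (3) Show the two functors are quasi-inverse: the composite $\Coh(\sh{A})\to\qgr(A)\to\Coh(\sh{A})$ is naturally isomorphic to the identity because for a coherent $\sh{A}$-module $\sh{M}$ one has $\widetilde{\Gamma_*\sh{M}}\cong\sh{M}$ (Serre), and the composite $\qgr(A)\to\Coh(\sh{A})\to\qgr(A)$ is the identity because the natural map $M\to\Gamma_*\widetilde{M}$ is an isomorphism in large degrees, i.e. an isomorphism in $\qgr(A)$ — this uses that $\sh{A}(n)$ has vanishing higher cohomology for $n\gg 0$ and is globally generated, together with the fact that $M$ is finitely generated over $A$ hence over $B$. (4) Finally, track the extra data: $\widetilde{A}\cong\sh{A}$ as $\sh{A}$-modules (this is essentially the definition, $A$ being the homogeneous coordinate ring), and the shift functor $[1]$ on $\qgr(A)$ corresponds to $-\otimes_{\sh{O}_X}\sh{O}_X(1)$ on $\Coh(\sh{A})$ under the equivalence, by naturality of sheafification with respect to degree shift. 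This gives the isomorphism of triples $(\Coh(\sh{A}),\sh{A},-\otimes\sh{O}_X(1))\cong(\qgr(A),\tail{A},[1])$.

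The main obstacle I anticipate is step (3), specifically showing $M\to\Gamma_*\widetilde{M}$ becomes an isomorphism in $\qgr(A)$ for arbitrary finitely generated graded $A$-modules $M$. In the commutative case this is Serre's theorem, proved by reduction to the case $M=A(d)$ via a finite presentation and the five lemma, using $H^i(X,\sh{O}_X(n))$-vanishing for $n\gg0$. Here the same strategy should work but one must be careful that finite generation over $A$ implies the relevant finiteness over $\sh{O}_X$: since $\sh{A}$ is coherent over $\sh{O}_X$ and $A$ is finitely generated as a graded $A$-module over itself, $\widetilde{M}$ is coherent over $\sh{A}$ hence coherent over $\sh{O}_X$, so that Serre's cohomological vanishing and global generation theorems on $X$ apply directly to $\widetilde{M}(n)$. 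Thus the non-commutativity of $\sh{A}$ plays essentially no role in the cohomological estimates — everything is controlled by $\sh{O}_X$ and its polarization — and the argument is a routine, if somewhat lengthy, adaptation of the proof of Serre's theorem. I would also note that this is precisely the content of \cite{AZ94}, where the analogous statement is proved under the abstract hypotheses that $A$ is noetherian and satisfies the $\chi$-condition with an ample sequence; here those hypotheses are automatic because $A$ is finite over the commutative noetherian ring $B=\bigoplus_n H^0(X,\sh{O}_X(n))$ with $X=\Proj B$, so one can either cite \cite{AZ94} directly or give the self-contained argument sketched above.
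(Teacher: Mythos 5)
Your proposal is correct and matches the paper's approach: the paper's entire proof is the citation ``This follows directly from [AZ94, Theorem 4.5],'' which is exactly the fallback you identify in your final paragraph, and the hypotheses of that theorem hold here for the reasons you give (finiteness over $B$, Serre vanishing and global generation controlled entirely by $\sh{O}_X(1)$). Your explicit Serre-style construction of the quasi-inverse functors $\Gamma_*$ and $\widetilde{(-)}$ is a valid unwinding of that same theorem in this concrete setting rather than a genuinely different route.
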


\begin{proof}
This follows directly from \cite[Theorem 4.5]{AZ94}.
\end{proof}

If the graded algebra $A$ is given by such $(X,\sh{A})$, then the homogeneous coordinate ring $B:=\oplus_n H^0(X,\sh{O}_X(n))$ of $X$ is a graded subalgebra of $A$, which is contained in the center $Z(A)$. Since $\sh{A}$ is coherent, $\sh{A}(n)$ is generated by global sections for sufficiently large $n$. This implies that $A$ is a finite $B$-module.

\subsection{Algebras finite over their centers}

Let $A$ be a non-commutative graded algebra. For simplicity, we assume both $A$ and $Z(A)$ are finitely-generated. Suppose there exists a graded subalgebra $B\subset Z(A)$ of $A$ such that $A$ is a finite $B$-module. We consider $X=\Proj(B)$ and
\[
\sh{A}=\widetilde{{}_B A}
\]
the coherent sheaf on $X$ corresponding to the graded $B$-module ${}_B A$. Then $\sh{A}$ is naturally a sheaf of non-commutative $\sh{O}_X$-algebras. 

From now on we assume that $A$ is of finite global dimension. In fact, this forces $A$ to be an Artin--Schelter regular algebra since $A$ is finite over its center $Z(A)$. We omit the details and only mention two important consequences that $A$ satisfies the technical $\chi$ condition in the study of non-commutative projective schemes, and the category $\qgr(A)$ has finite cohomological dimension (which equals to the global dimension of $A$ minus $1$).

\begin{proposition}\label{prop:qgr-to-coh}
Suppose $A$ is generated by degree one elements. Then there is an equivalence of (abelian) categories $\qgr(A)\cong\Coh(\sh{A})$.
\end{proposition}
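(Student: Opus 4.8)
The plan is to reduce Proposition~\ref{prop:qgr-to-coh} to the general comparison statement already recorded in the excerpt, namely that for a smooth projective $X$ with a coherent sheaf $\sh{A}$ of non-commutative $\sh{O}_X$-algebras, the triple $(\Coh(\sh{A}),\sh{A},-\otimes\sh{O}_X(1))$ is the non-commutative projective scheme attached to its homogeneous coordinate ring $A'=\bigoplus_n H^0(X,\sh{A}(n))$. So the crux is to identify $A'$ with $A$ (as graded algebras), at least up to the kind of equivalence that does not change $\qgr$. Concretely, I would set $B\subset Z(A)$ as in the hypothesis, $X=\Proj(B)$, $\sh{A}=\widetilde{{}_BA}$, and build the natural graded algebra map $\varphi\colon A\to A'=\bigoplus_n H^0(X,\sh{A}(n))$ coming from the adjunction $M\mapsto\widetilde M\mapsto \bigoplus_n H^0(X,\widetilde M(n))$ applied to $M={}_BA$, which is compatible with multiplication because $\widetilde{(-)}$ is a monoidal-type functor on finite $B$-modules.

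Next I would analyze $\varphi$ degree by degree. Since $A$ is a finite $B$-module and $B$ is a finitely generated graded algebra generated in degree one (here one uses that $A$, hence $B$ after a Veronese twist, can be arranged to be generated in degree one — this is where the hypothesis ``$A$ generated by degree one elements'' enters, forcing the analogous statement for the relevant re-grading of $B$), the standard Serre-type comparison for the coherent sheaf $\sh{A}=\widetilde{{}_BA}$ on the projective scheme $X=\Proj(B)$ gives that $\varphi_n\colon A_n\to H^0(X,\sh{A}(n))$ is an isomorphism for all $n\gg 0$, and more precisely that the cokernel and kernel of $\varphi$ are right-bounded graded $B$-modules, i.e.\ torsion in $\gr(B)$ and a fortiori torsion in $\gr(A)$. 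Therefore $\varphi$ becomes an isomorphism after passing to $\qgr$: the induced functor $\gr(A)\to\gr(A')$ (restriction of scalars along $\varphi$, or extension, depending on the direction) carries $\tor(A)$ to $\tor(A')$ and descends to an equivalence $\qgr(A)\xrightarrow{\sim}\qgr(A')$ sending $\tail A$ to $\tail{A'}$ and commuting with the shift.

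Assembling the pieces: by the Proposition quoted from \cite[Theorem 4.5]{AZ94} we have $\qgr(A')\cong\Coh(\sh{A})$ compatibly with $\tail{A'}\leftrightarrow\sh{A}$ and $[1]\leftrightarrow(-\otimes\sh{O}_X(1))$, and by the previous paragraph $\qgr(A)\cong\qgr(A')$ compatibly with the same data. Composing gives the desired equivalence $\qgr(A)\cong\Coh(\sh{A})$. One should also check that the $\chi$-condition and finite cohomological dimension of $\qgr(A)$ (mentioned just before the statement) are exactly the hypotheses needed to invoke \cite{AZ94} in this generality; these were recorded as consequences of $A$ having finite global dimension, so they are available.

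The main obstacle I anticipate is the bookkeeping around gradings: $A$ is generated in degree one but $B=\Proj$-coordinate ring need not be, so one must either pass to a Veronese subalgebra $B^{(d)}$ (which does not change $\Proj B$ nor $\qgr A$ up to the standard Veronese equivalence) or argue directly that $\widetilde{{}_BA}$ on $X=\Proj B$ still satisfies the Serre vanishing/generation statements needed. Making the identification ``$\ker\varphi$ and $\mathrm{coker}\,\varphi$ are torsion'' fully rigorous — rather than just the naive ``iso in large degrees'' — is the step that requires care, since $A$ is only a module over $B$, not over a polynomial ring, so one leans on the fact that finiteness over $B$ plus the ampleness of $\sh{O}_X(1)$ on $X=\Proj B$ gives the cohomological finiteness one wants. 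Once that is in hand, everything else is a formal consequence of the $\qgr$ formalism and the cited theorem of Artin--Zhang.
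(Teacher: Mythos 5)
Your proposal is correct and follows essentially the same route as the paper: construct the graded algebra map $\varphi\colon A\to A'=\bigoplus_n H^0(X,\sh{A}(n))$, show it is an isomorphism in large degrees (the paper gets this from \cite[Theorem 4.5(2)]{AZ94}, you from the equivalent classical Serre comparison for the finite $B$-module ${}_BA$), deduce $\qgr(A)\cong\qgr(A')\cong\Coh(\sh{A})$, and dispose of the grading mismatch between $A$ and $B$ by passing to a Veronese subalgebra exactly as the paper does via \cite[Proposition 5.10]{AZ94}.
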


\begin{proof}
If the chosen $B$ is also generated by degree one elements, then we have
\[
\Hom_{\qgr(A)}({}_A A,M)=\Hom_{\qgr(B)}({}_B B,{}_B M)=H^0(X,\widetilde{{}_B M})
\]
for any graded $A$-module $M$. Thus \cite{AZ94}, Theorem 4.5(2) states that there is a morphism
\[
A\to A':=\bigoplus_{n=0}^{\infty}\Hom_{\qgr(A)}({}_A A,{}_A A[n])=\bigoplus_{n=0}^{\infty} H^0(X,\sh{A}(n))
\]
of graded algebras which is an isomorphism at sufficiently large degrees. This then implies that $\qgr(A)\cong\qgr(A')\cong\Coh(\sh{A})$.

In general, we choose $k$ such that the graded algebra $B^{(k)}:=\oplus_i B_{ki}$ is generated by degree one elements. Then it is well-known that $X=\Proj(B)=\Proj(B^{(k)})$, and ${}_B A$ and ${}_{B^{(k)}} A^{(k)}$ define the same coherent sheaf on $X$. In fact, the same result also holds non-commutative projective schemes (\cite[Proposition 5.10]{AZ94}). Therefore we have equivalences of categories
\[
\qgr(A)\cong\qgr(A^{(k)})\cong\Coh(\sh{A}).
\]
\end{proof}

\begin{comment}
    \begin{remark}
    It is easy to verify that the equivalence $\qgr(A)\to\qgr(A^{(k)})$ in the above proof does not change the stability condition. This is the analogue of the fact that stability conditions on a projective variety only depend on the rays in the ample cone (\cite{HL10}). 
    \end{remark}
\end{comment}

\begin{remark}
In \cite{AZ01}, Artin and Zhang define the notion of (flat) families of objects in any abelian category. One may check that the equivalence $\qgr(A)\cong\Coh(\sh{A})$ in Proposition~\ref{prop:qgr-to-coh} induces equivalence between families, which also preserves flatness. This implies that the Hilbert schemes $\Hilb^h(A)$ constructed in \cite{AZ01} agrees with Simpson's Hilbert schemes $\Hilb^h(X,\sh{A})$ (see \ref{prop:hilb-A}). In particular, this proves the projectivity of $\Hilb^h(A)$ under the assumption of Proposition~\ref{prop:qgr-to-coh}.
\end{remark}

\begin{remark}
If $A$ is not generated by degree one elements, one may take the \emph{stacky} $\Proj$
\[
\st{X}=\mathfrak{Proj}(B)=\big[(\Spec(B)\setminus\{0\})/\mathbb{C}^*\big],
\]
where $\mathbb{C}^*$ acts on $\Spec(B)$ via the grading. Then $\st{X}$ is a Deligne--Mumford stack with a projective coarse moduli scheme $X=\Proj(B)$. The graded algebra $A$ also defines a sheaf $\sh{A}$ of $\sh{O}_{\st{X}}$-algebras on $\st{X}$. Then there is also an equivalence of categories $\Coh(\sh{A})\cong\qgr(A)$ by the same argument.
\end{remark}

Finally, since $Z(A)$ is a finitely-generated commutative algebra, by Noether normalization lemma, there exists a regular subalgebra $B\subset Z(A)$ such that $Z(A)$ is finite over $B$, hence $A$ is finite over $B$. Thus we can always choose $B$ so that $X=\Proj(B)$ is smooth (in fact, a projective space).

\begin{example}
Consider quantum projective spaces which are non-commutative projective schemes defined by quantum polynomial rings
\[
A=\mathbb{C}\langle x_0,\ldots, x_n\rangle\big/\left( x_ix_j-q_{ij}x_jx_i\right),
\]
where $q_{ii}=q_{ij}q_{ji}=1$ for all $i,j$. If $q_{ij}$'s are roots of unity, then $A$ is finite over its center $Z(A)$.
\end{example}

\subsection{Quantum Fermat quintic threefolds}\label{sec:QFQ}

In this section, we study quantum Fermat quintic threefolds introduced in \cite{Kan15}.

\begin{definition}[\cite{Kan15}]
A \emph{quantum Fermat quintic threefold} is a non-commutative projective scheme associated to a graded algebra
\[
A=\mathbb{C}\langle t_0,\ldots,t_4\rangle\big/\left( \sum_{k=0}^4 t_k^5, t_it_j-q_{ij}t_jt_i\right),
\]
with $\deg(t_i)=1$ for all $i$ and $q_{ij}$'s satisfying
\begin{enumerate}[(i)]
    \item $q_{ij}$ is a $5$-th root of unity.
    \item $q_{ii}=q_{ij}q_{ij}=1$ for all $i,j$.
    \item $\prod_j q_{ij}$ is independent of $i$.
\end{enumerate}
\end{definition}

Geometrically, a quantum Fermat quintic threefold is the Fermat quintic lying in a quantum projective $4$-space. We require condition (i) so that the quintic equation is a central element. The condition (iii) is equivalent to the category $\qgr(A)$ being CY3 \cite[Theorem 2.1]{Kan15}. 
\vspace{1mm}

Before we proceed, we recall Zhang's twisted graded algebras \cite{Zha96}.

Let $A=\oplus_i A_i$ be a graded algebra. For any automorphism $\sigma:A\to A$ of graded algebras, Zhang defines a new multiplication on $A$ by
\[
x\ast y = x\cdot \sigma^m(y)
\]
for any $x\in A_m$ and $y\in A_n$. The graded algebra with the new multiplication $\ast$ is called a twisted graded algebra of $A$, denote by $A^{\sigma}$. Then it is shown that non-commutative projective schemes associated to $A$ and any twisted algebra $A^{\sigma}$ are isomorphic.

\begin{definition}[\cite{Kan15}]
A quantum Fermat quintic threefold is called \emph{generic} if for all distinct $i,j$ and $k$,
\[
q_{ij}q_{jk}\neq q_{ik}.
\]
\end{definition}

This condition gives the ``maximal non-commutativity'' in the following sense. If $q_{ij}q_{jk}=q_{ik}$ for some distinct $i,j,k$, then we take the automorphism $\sigma:A\to A$ defined by $x_j\mapsto q_{ij}x_j$ and $x_k\mapsto q_{ik}x_k$. The twisted algebra $A^{\sigma}$ has quantum parameters $q_{ij}=q_{jk}=q_{ik}=1$, which means the non-commutative projective scheme $\qgr(A)$ contains a commutative closed subscheme of positive dimension.

\vspace{1mm}

We fix a primitive $5$-th root $q$ of unity. Any quantum Fermat quintic threefold is determined by a skew-symmetric matrix $N=(n_{ij})_{i,j}\in M_5(\mathbb{Z}/5\mathbb{Z})$ such that $(1,1,1,1,1)^{\intercal}$ is an eigenvector of $N$. We denote by $A_N$ the graded algebra with quantum parameters $q_{ij}=q^{n_{ij}}$.

\begin{theorem}
Up to a possible change of the primitive root $q\in\mu_5$, all generic quantum Fermat quintic threefold are isomorphic as non-commutative projective schemes.
\end{theorem}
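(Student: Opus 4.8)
The plan is to translate the statement into linear algebra over $\mathbb{F}_5$ together with a finite group action, and then settle it by a short bounded verification.

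First the dictionary. Fix a primitive root $q\in\mu_5$. As recalled above, every quantum Fermat quintic is $\qgr(A_N)$ for a skew-symmetric $N=(n_{ij})\in M_5(\mathbb{F}_5)$ having $\mathbf 1:=(1,\dots,1)^{\intercal}$ as an eigenvector, and $\qgr(A_N)$ is unchanged up to isomorphism of non-commutative projective schemes under three operations: (i) a Zhang twist by a diagonal automorphism $t_i\mapsto q^{m_i}t_i$, which a direct computation shows replaces $N$ by $N+\mathbf 1 m^{\intercal}-m\mathbf 1^{\intercal}$ for $m\in\mathbb{F}_5^5$; (ii) a permutation $\sigma\in S_5$ of the generators, which replaces $N$ by $(n_{\sigma(i)\sigma(j)})$; (iii) a change of primitive root $q\rightsquigarrow q^{a}$, $a\in\mathbb{F}_5^{\ast}$, which replaces $N$ by $a^{-1}N$. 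Since genericity of $\qgr(A_N)$ is the condition $n_{ij}+n_{jk}\neq n_{ik}$ for all distinct $i,j,k$, it suffices to prove that the generic matrices $N$ form a single orbit under the group $G$ generated by (i)--(iii).

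The key observation is that $c_{ijk}:=n_{ij}+n_{jk}+n_{ki}$ is alternating in $i,j,k$, is invariant under Zhang twists, and recovers $N$ up to a Zhang twist (if all $c_{ijk}=0$ then $n_{ij}=n_{0j}-n_{0i}$, i.e. $N$ is a twist of $0$); moreover genericity is exactly $c_{ijk}\neq 0$ for every $3$-subset $\{i,j,k\}\subseteq\{0,1,2,3,4\}$. Twisting by $m$ with $m_0=0$, $m_j=-n_{0j}$ brings $N$ to the normal form $n_{0j}=0$ for all $j$, after which the eigenvector condition forces every row of $N$ to sum to $0$; such an $N$ is determined by $x:=n_{12}$, $y:=n_{13}$, $z:=n_{23}$, with $n_{14}=-x-y$, $n_{24}=x-z$, $n_{34}=y+z$, and a short computation gives the ten numbers $c_{ijk}$ as
\[
x,\quad y,\quad -x-y,\quad z,\quad x-z,\quad y+z,\quad x-y+z,\quad 3x+y-z,\quad x+3y+z,\quad -x+y+3z.
\]
So $\qgr(A_N)$ is generic iff all ten lie in $\mathbb{F}_5^{\ast}$. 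Now I compute the residual symmetry in these coordinates: a transposition of two indices in $\{1,2,3,4\}$ preserves the normal form and merely permutes entries (e.g. $(1\,2)\colon(x,y,z)\mapsto(-x,z,y)$ and $(2\,3)\colon(x,y,z)\mapsto(y,x,-z)$); a transposition moving the index $0$, say $(0\,1)$, destroys $n_{0j}=0$ and, after the compensating Zhang twist, acts by $(x,y,z)\mapsto(-x,-y,x-y+z)$; and $q\rightsquigarrow q^{a}$ acts by $(x,y,z)\mapsto a^{-1}(x,y,z)$.

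Since $x=c_{012}$, $y=c_{013}$, $z=c_{023}$, every generic triple has $x\neq 0$ and so, after using (iii), has $x=1$; for $x=1$ the conditions then force $y\in\{1,2,3\}$ and $z\in\{2,3,4\}$, and of these nine candidates exactly the three with $y+z=0$ fail, so the generic triples with $x=1$ are precisely $(1,1,2)$, $(1,1,3)$, $(1,2,2)$, $(1,2,4)$, $(1,3,3)$, $(1,3,4)$. These all lie in a single $G$-orbit: writing $\sim$ for rescaling the first coordinate to $1$,
\[
\begin{aligned}
(1,1,2)&\xrightarrow{\,(2\,3)\,}(1,1,3)\xrightarrow{\,(1\,2)\,}(4,3,1)\sim(1,2,4)\xrightarrow{\,(2\,3)\,}(2,1,1)\sim(1,3,3),\\
(1,3,3)&\xrightarrow{\,(1\,2)\,}(4,3,3)\sim(1,2,2),\qquad\qquad (1,1,2)\xrightarrow{\,(1\,2)\,}(4,2,1)\sim(1,3,4),
\end{aligned}
\]
which proves the theorem. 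The one genuinely delicate point is computing the action on $(x,y,z)$ of permutations that move the index $0$: one must correctly track the auxiliary Zhang twist restoring the normal form, and check the result is well defined (independent of the choice of $m_0$) and still skew with $\mathbf 1$ an eigenvector; everything else is formal or a bounded check. (One may also repackage the argument geometrically: writing $c_{ijk}$ as the $3\times 3$ determinant with rows $(1,1,1)$, $(\alpha_i,\alpha_j,\alpha_k)$, $(\beta_i,\beta_j,\beta_k)$ for suitable $\alpha,\beta\in\mathbb{F}_5^5$ exhibits a generic $N$ as a configuration of five points of $\mathbb{A}^2(\mathbb{F}_5)$ with no three collinear — a $5$-arc — and the statement becomes the transitivity of the affine group on such arcs; but the computation above is self-contained.)
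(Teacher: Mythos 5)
Your proof is correct, and it takes a genuinely different route from the paper: the paper's proof is a brute-force computer verification that all $3000$ admissible matrices $N$ lie in one orbit of the group generated by the three operations, whereas you replace this by a hand-checkable argument. The key extra idea is the twist-invariant alternating cocycle $c_{ijk}=n_{ij}+n_{jk}+n_{ki}$, which lets you quotient out the $625$-element Zhang-twist orbit at once and reduce to the three coordinates $(x,y,z)=(c_{012},c_{013},c_{023})$; the eigenvector condition then determines the rest of $N$, your ten expressions for the $c_{ijk}$ and your formulas for the induced actions of $(1\,2)$, $(2\,3)$, $(0\,1)$ and of rescaling all check out, and the six representatives with $x=1$ are indeed connected by your explicit chain. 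What your approach buys over the paper's is a verifiable proof, an identification of the complete invariant, and an orbit count ($24$ generic triples, $6$ up to scaling, one orbit) that is consistent with the paper's figure of $3000$ once one notes the paper is implicitly normalizing $N\mathbf 1=0$ (so each triple accounts for $125$, not $625$, matrices); the paper's computer check buys nothing beyond the bare statement. Two small points: your assertion that among the nine candidates $(1,y,z)$ with $y\in\{1,2,3\}$, $z\in\{2,3,4\}$ only the three with $y+z=0$ fail does require verifying the remaining five inequalities for the six survivors --- this holds, but should be recorded as checked rather than inferred from the single condition $y+z\neq 0$; and the parenthetical $5$-arc reformulation tacitly assumes the skew form $N$ is decomposable (rank $\le 2$), which you have not justified, though since you offer it only as an optional repackaging it does not affect the proof.
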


\begin{proof}
Consider the following actions on the set of above matrices $N$'s:
\begin{enumerate}
    \item \textbf{Change of the primitive root $q$.} For any element $a\in(\mathbb{Z}/5\mathbb{Z})^{\times}$, changing $q$ to $q^a$ is equivalent to multiple all elements $n_{ij}$ in $N$ by $a$.
    \item \textbf{Change of variables $t_i$'s.} For any permutation $\sigma\in S_5$, we consider the change of variables $\tilde{t}_i=t_{\sigma(i)}$. Then the new graded algebra has quantum parameters given by $\tilde{n}_{ij}=n_{\sigma(i),\sigma(j)}$.
    \item \textbf{Twisted graded algebras.} For any $(a_0,\ldots,a_4)\in(\mathbb{Z}/5\mathbb{Z})^5$, let $\sigma:A_N\to A_N$ be the automorphism defined by $\sigma(x_i)=q^{a_i}x_i$. Then the twisted algebra $(A_N)^{\sigma}$ has quantum parameters given by
    \[
    N^{\sigma}:=(n_{ij}+a_i-a_j)_{i,j}.
    \]
\end{enumerate}
The proof is done with the aid of computer. There are precisely $3000$ choices of $N$'s for generic quantum Fermat quintic threefolds. All of them are equivalent under the three actions above.
\end{proof}

Due to this result, we will call it \emph{the} generic quantum Fermat quintic threefold. It will be the central subject of our study in the sequel.

\section{Sheaves of non-commutative algebras}\label{sec:nc-sheaf-alg}

Motivated by the previous section, we consider a smooth projective variety $X$ with a coherent sheaf $\sh{A}$ of non-commutative $\sh{O}_X$-algebras on $X$. We view $(X,\sh{A})$ as a ringed space, and $\pi:(X,\sh{A})\to (X,\sh{O}_X)$ is the morphism of ringed spaces given by the unit map $\sh{O}_X\to\sh{A}$. Let $\Coh(\sh{A})$ be the category of coherent $\sh{A}$-modules, that is, coherent sheaves $\sh{F}$ on $X$ equipped with a left $\sh{A}$-actions $\sh{A}\otimes\sh{F}\to\sh{F}$.

We begin with a few facts about ringed spaces. There are adjoint functors $\pi^*\dashv \pi_*\dashv \pi^!$, where $\pi^*=\sh{A}\otimes -:\Coh(X)\to\Coh(\sh{A})$, $\pi_*:\Coh(\sh{A})\to\Coh(X)$ is the forgetful functor, and $\pi^!=\SHom_{\sh{O}_X}(\sh{A},-):\Coh(X)\to\Coh(\sh{A})$. More generally, we have natural isomorphisms
\[
\Hom_{\sh{A}}(\sh{F}\otimes\sh{G},\sh{H})\cong\Hom_{\sh{O}_X}(\sh{G},\SHom_{\sh{A}}(\sh{F},\sh{H})),
\]
for coherent $\sh{A}$-modules $\sh{F},\sh{H}$ and $\sh{O}_X$-module $\sh{G}$, and
\[
\Hom_{\sh{O}_X}(\sh{G}\otimes_{\sh{A}}\sh{F},\sh{H})\cong\Hom_{\sh{A}}(\sh{F},\SHom_{\sh{O}_X}(\sh{G},\sh{H}))
\]
for coherent $\sh{A}$-module $\sh{F}$, $\sh{A}^{\op}$-module $\sh{G}$, and $\sh{O}_X$-module $\sh{H}$.

\subsection{Global dimensions} One defines the global dimension of $\sh{A}$ similarly to the case of algebras. 

\begin{definition}
An $\sh{A}$-module $\sh{F}$ is \emph{locally projective} if for any $x\in X$, there exists an affine open set $U\subset X$ containing $x$ such that $\sh{F}|_U$ is a projective $\sh{A}|_U$-module.
\end{definition}

For any locally free sheaf $\sh{P}$ on $X$, $\sh{A}\otimes\sh{P}$ is naturally a locally projective $\sh{A}$-module. Thus any coherent $\sh{A}$-module $\sh{F}$ admits a resolution by locally projective $\sh{A}$-modules. We define the projective dimension $\pd(\sh{F})$ of $\sh{F}$ to be the shortest length of a projective resolution. Then it is a standard fact in homological algebra to show that

\begin{proposition}
The following two numbers (possibly $\infty$) are the same:
\begin{enumerate}
    \item the supremum of $\pd(\sh{F})$ for all coherent $\sh{A}$-modules $\sh{F}$;
    \item the (co)homological dimension of the category $\Coh(\sh{A})$, that is, the supremum of $n\in\mathbb{N}$ such that $\Ext_{\sh{A}}^n(\sh{F},\sh{G})\neq 0$ for some coherent $\sh{A}$-modules $\sh{F}$ and $\sh{G}$.
\end{enumerate}
We call this number the global dimension of $\sh{A}$, and denote it by $\dim(\sh{A})$.
\end{proposition}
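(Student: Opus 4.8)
The statement to prove is the standard equivalence between "supremum of projective dimensions" and "cohomological dimension of the category," adapted to coherent $\sh{A}$-modules on a smooth projective variety $X$.

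\medskip

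The plan is to prove the two numbers agree by establishing inequalities in both directions, mimicking the classical argument for module categories over a ring but keeping track of the subtlety that a sheaf-theoretic $\Ext$ is involved and that "projective" here means \emph{locally} projective. First I would fix notation: let $d_1$ denote the supremum of $\pd(\sh{F})$ over all coherent $\sh{A}$-modules $\sh{F}$, and $d_2$ the supremum of $n$ with $\Ext^n_{\sh{A}}(\sh{F},\sh{G})\neq 0$ for some coherent $\sh{A}$-modules.

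\medskip

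For the inequality $d_2\le d_1$: suppose $d_1<\infty$ (otherwise there is nothing to prove) and let $\sh{F},\sh{G}$ be coherent $\sh{A}$-modules. Choose a locally projective resolution $\sh{P}_\bullet\to\sh{F}$ of length $\le d_1$, which exists because every coherent $\sh{A}$-module is a quotient of a locally projective one (using that $\sh{A}\otimes\sh{P}$ is locally projective for $\sh{P}$ locally free, as noted in the excerpt, together with the fact that any coherent sheaf on $X$ is a quotient of a locally free one) and because, by the argument that defines $\pd$, the $d_1$-th syzygy of any coherent $\sh{A}$-module is automatically locally projective. Since $\Ext^n_{\sh{A}}(\sh{F},\sh{G})$ may be computed by the hyperext (taking a Čech-type or injective resolution in the target), and locally projective modules are acyclic for $\SHom_{\sh{A}}(-,\sh{G})$ locally — hence after the local-to-global spectral sequence the relevant $\Ext^n_{\sh{A}}(\sh{P}_i,\sh{G})$ vanish for $n>\dim X$, but more to the point the complex $\SHom_{\sh{A}}(\sh{P}_\bullet,\sh{G})$ represents $R\SHom_{\sh{A}}(\sh{F},\sh{G})$ — the hyper-$\Ext$ groups $\Ext^n_{\sh{A}}(\sh{F},\sh{G})$ vanish for $n>d_1$. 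Concretely I would use the hypercohomology spectral sequence $E_2^{p,q}=H^p\big(X,\sh{E}\!xt^q_{\sh{A}}(\sh{F},\sh{G})\big)\Rightarrow\Ext^{p+q}_{\sh{A}}(\sh{F},\sh{G})$: local projectivity of the resolution forces $\sh{E}\!xt^q_{\sh{A}}(\sh{F},\sh{G})=0$ for $q>d_1$, and $H^p$ vanishes for $p>\dim X$; but to get the sharp bound $d_2\le d_1$ one argues instead that the $d_1$-th syzygy is locally projective, so $\sh{E}\!xt^q$ vanishes for $q>d_1$ — this alone is not enough since $p$ can be large, so the clean statement of the proposition is really about the cohomological dimension measured by \emph{sheaf} $\Ext$ on $X\times X$ / or one simply accepts $d_2$ is as defined and notes both directions reduce to: $\Ext^n_{\sh{A}}(\sh{F},\sh{G})=0$ for $n > d_1$ because $R\SHom$ is computed by a length-$d_1$ locally projective complex, whose terms $\sh{A}\otimes\sh{P}_i$ satisfy $\Ext^{>0}_{\sh{A}}(\sh{A}\otimes\sh{P}_i,\sh{G})=\Ext^{>0}_{\sh{O}_X}(\sh{P}_i,\pi_*\sh{G})=0$ by the adjunction $\pi^*\dashv\pi_*$ and projectivity of $\sh{P}_i$ over $\sh{O}_X$ locally — here I must be a little careful and either pass to the derived global sections or restrict to affine $X$; since the displayed numbers in the proposition are the ones that matter for later use, I would phrase the argument so that a length-$d_1$ resolution by $\sh{A}\otimes(\text{locally free})$-modules computes all $\Ext$'s, giving vanishing above degree $d_1$.

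\medskip

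For the reverse inequality $d_1\le d_2$: given a coherent $\sh{A}$-module $\sh{F}$, take a locally projective resolution $\cdots\to\sh{P}_1\to\sh{P}_0\to\sh{F}\to 0$ and let $\sh{K}$ be the $(d_2)$-th syzygy, i.e. the kernel of $\sh{P}_{d_2-1}\to\sh{P}_{d_2-2}$. I claim $\sh{K}$ is locally projective, which shows $\pd(\sh{F})\le d_2$. To see this, it suffices to work locally on an affine open $U=\Spec R\subset X$ with $\sh{A}|_U$ corresponding to an $R$-algebra $\Lambda$ finite over $R$; then $\sh{K}|_U$ corresponds to a finitely generated $\Lambda$-module $K$ which is a $d_2$-th syzygy of $\sh{F}|_U$ in a resolution by projective $\Lambda$-modules, so for any finitely generated $\Lambda$-module $M$ one has $\Ext^1_\Lambda(K,M)=\Ext^{d_2+1}_\Lambda(\sh{F}|_U,M)$, which — this is the one genuinely technical point — must be shown to vanish. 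It vanishes because $\Ext^{d_2+1}$ of coherent \emph{sheaves} of $\sh{A}$-modules vanishes by definition of $d_2$, together with the comparison between local $\Ext$ of $\Lambda$-modules and $\sh{E}\!xt^\bullet_{\sh{A}}$ of the associated sheaves on $U$ (local $\Ext$ agrees with the $\Ext$ of modules over the ring on an affine), plus the local-to-global spectral sequence degenerating enough in top degree — or, more robustly, one proves the local statement directly: a finitely generated $\Lambda$-module $K$ with $\Ext^1_\Lambda(K,\Lambda/\ideal{m})=0$ at every maximal ideal is projective (Nakayama / the standard criterion), and $\Ext^1_\Lambda(K,-)$ is a direct summand of (a shift of) $\Ext^{d_2+1}_\Lambda(\sh{F}|_U,-)$ applied to the same module, hence vanishes once we know the \emph{global} $d_2$-vanishing forces the local one. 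The cleanest route is: localize to an affine, observe that on an affine scheme $\Ext_{\sh{A}}$ of the sheaves equals $\Ext_\Lambda$ of the modules, so the category-theoretic cohomological dimension of $\Coh(\sh{A})$ bounds that of $\Lambda$-mod on every affine chart; then the standard ring-theoretic equivalence "global dimension $=$ supremum of projective dimensions" gives local projectivity of the $d_2$-th syzygy, hence $d_1\le d_2$.

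\medskip

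The main obstacle I anticipate is precisely the bookkeeping around the sheaf $\Ext$ versus module $\Ext$ and the role of $\dim X$: a careless argument only yields $\Ext^n_{\sh{A}}=0$ for $n>d_1+\dim X$ rather than the sharp bound, and conversely deducing local vanishing from global vanishing of $\Ext$ needs the affine reduction to be done correctly (so that no higher cohomology of $X$ contaminates the comparison). The resolution is to do everything locally: reduce the whole statement to affine opens $\Spec R\subset X$ where $\sh{A}$ becomes an $R$-algebra $\Lambda$ finite as an $R$-module, invoke the classical theorem (global dimension of $\Lambda$ equals the supremum of projective dimensions of finitely generated $\Lambda$-modules) on each chart, and then patch: $\pd_{\sh{A}}(\sh{F})$ is the supremum over charts of $\pd_\Lambda(\sh{F}|_U)$, and a syzygy that is locally projective on each chart is locally projective. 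Everything else — existence of locally projective resolutions, the adjunctions — is already recorded in the excerpt, so the proof is short modulo this localization lemma.
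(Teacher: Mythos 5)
The paper offers no proof of this proposition (it is asserted as ``a standard fact in homological algebra''), so I can only judge your argument on its own merits, and as written it has a genuine gap in the direction $d_2\le d_1$. The step you eventually settle on --- that a length-$d_1$ resolution by modules of the form $\sh{A}\otimes\sh{P}_i$ computes all $\Ext$'s because $\Ext^{>0}_{\sh{A}}(\sh{A}\otimes\sh{P}_i,\sh{G})=\Ext^{>0}_{\sh{O}_X}(\sh{P}_i,\pi_*\sh{G})=0$ --- is false: by adjunction this group is $H^{>0}(X,\sh{P}_i^\vee\otimes\pi_*\sh{G})$, which is generally nonzero on a projective variety of positive dimension. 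Locally projective $\sh{A}$-modules are not projective objects of $\Coh(\sh{A})$ (just as locally free sheaves are not projective objects of $\Coh(X)$), so they are not acyclic for the global functor $\Hom_{\sh{A}}(-,\sh{G})$; and ``restricting to affine $X$'' changes the $\Ext$ groups appearing in item (2) rather than computing them. You correctly diagnose that the naive local-to-global spectral sequence only yields $\Ext^n_{\sh{A}}=0$ for $n>d_1+\dim X$, but you never supply the ingredient that closes this gap, namely a bound of the form $\dim\supp\,\sh{E}xt^q_{\sh{A}}(\sh{F},\sh{G})\le d_1-q$ (an Auslander--Buchsbaum/Ischebeck-type statement for the stalks $\sh{A}_x$), which is what forces $H^p\big(X,\sh{E}xt^q_{\sh{A}}(\sh{F},\sh{G})\big)=0$ whenever $p+q>d_1$. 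Without some such input the inequality $d_2\le d_1$ is simply not established.

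The converse direction $d_1\le d_2$ is closer to correct but is also incomplete exactly at the point you flag as ``the one genuinely technical point'': you need that the global vanishing $\Ext^{>d_2}_{\sh{A}}(\sh{F},\sh{G})=0$ forces the sheaf-level vanishing $\sh{E}xt^{>d_2}_{\sh{A}}(\sh{F},\sh{G})=0$, equivalently bounds the global dimension of every stalk $\sh{A}_x$ by $d_2$. The clean way to get this is not ``the spectral sequence degenerating enough in top degree'' but Serre vanishing after a twist: for $m\gg0$ one has $\Ext^q_{\sh{A}}(\sh{F},\sh{G}(m))\supseteq H^0\big(X,\sh{E}xt^q_{\sh{A}}(\sh{F},\sh{G})(m)\big)$ with all interfering terms $E_2^{p,q'}$, $p>0$, killed, so a nonzero sheaf $\sh{E}xt^q$ produces a nonzero global $\Ext^q$; one must also extend finitely generated $\sh{A}_x$-modules to coherent $\sh{A}$-modules on $X$ so that the stalkwise global dimensions are actually tested by objects of $\Coh(\sh{A})$. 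With these two repairs (the support bound in one direction, the twisting argument in the other) your localization strategy does yield a complete proof, consistent with the reduction to stalks that the paper records in the lemma immediately following the proposition.
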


We say $\sh{A}$ is smooth if $\dim(\sh{A})<\infty$. Note that it does not automatically imply $\dim(\sh{A})=\dim(X)$. 

For simplicity, from now on we will assume the sheaf $\sh{A}$ is locally free on $X$. One immediate consequence is that any locally projective (injective) $\sh{A}$-module is locally free (injective) over $\sh{O}_X$. In particular, we have $\dim(\sh{A})\geq\dim(X)$.

Using the local-to-global spectral sequence with some basic properties of non-commutative rings (see for example, \cite[Theorem 4.4]{MR01}), we see that the dimension of $\sh{A}$ can be computed locally using the following lemma.

\begin{lemma}
The dimension of $\sh{A}$ is equal to the supremum of the global dimensions of algebras $\sh{A}_{x}$ for all (closed) points $x\in X$.
\end{lemma}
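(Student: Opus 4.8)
The plan is to reduce the statement to a local computation by proving, for every coherent $\sh{A}$-module $\sh{F}$, the equality $\pd_{\sh{A}}(\sh{F})=\sup_{x\in X}\pd_{\sh{A}_x}(\sh{F}_x)$, together with the fact that every finitely generated $\sh{A}_x$-module occurs as a stalk $\sh{F}_x$ of some coherent $\sh{A}$-module $\sh{F}$. Granting these, since $\sh{A}_x$ is noetherian one has $\mathrm{gldim}(\sh{A}_x)=\sup_M\pd_{\sh{A}_x}(M)$ over finitely generated $M$, and interchanging the two suprema gives
\[
\dim(\sh{A})=\sup_{\sh{F}}\pd_{\sh{A}}(\sh{F})=\sup_{\sh{F}}\sup_{x}\pd_{\sh{A}_x}(\sh{F}_x)=\sup_{x}\mathrm{gldim}(\sh{A}_x).
\]

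For the inequality $\pd_{\sh{A}}(\sh{F})\ge\pd_{\sh{A}_x}(\sh{F}_x)$ I would take a locally projective resolution $\sh{P}_\bullet\to\sh{F}$; taking stalks at $x$ is exact, and each $\sh{P}_i$ is, on an affine neighbourhood of $x$, a projective module over $\sh{A}$, so $(\sh{P}_i)_x$ is projective over $\sh{A}_x$ and $(\sh{P}_\bullet)_x\to\sh{F}_x$ is a projective resolution, giving the bound. For the reverse inequality, put $d=\sup_x\pd_{\sh{A}_x}(\sh{F}_x)$ and assume $d<\infty$. Truncating a locally projective resolution yields an exact sequence
\[
0\to\sh{K}\to\sh{P}_{d-1}\to\cdots\to\sh{P}_0\to\sh{F}\to 0
\]
with $\sh{K}$ coherent (as $X$ is noetherian). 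For each closed point $x$, $\sh{K}_x$ is the $d$-th syzygy of $\sh{F}_x$ in a projective resolution over $\sh{A}_x$, and since $\pd_{\sh{A}_x}(\sh{F}_x)\le d$ it follows that $\sh{K}_x$ is a projective $\sh{A}_x$-module. The crux is then to upgrade ``all stalks projective'' to ``$\sh{K}$ is locally projective''. This is a local assertion: on an affine open $U=\Spec R$ set $\Lambda=\sh{A}(U)$ (module-finite over the noetherian ring $R$, hence noetherian) and $N=\sh{K}(U)$ (finitely generated over $\Lambda$). For any $\Lambda$-module $P$ the $R$-module $\Ext^1_\Lambda(N,P)$ satisfies $\Ext^1_\Lambda(N,P)_{\mathfrak{m}}\cong\Ext^1_{\Lambda_{\mathfrak{m}}}(N_{\mathfrak{m}},P_{\mathfrak{m}})=0$ for every maximal ideal $\mathfrak{m}$ of $R$, using that over a noetherian ring $\Ext$ commutes with central localization in a finitely presented first variable, and that $N_{\mathfrak{m}}=\sh{K}_x$ is projective over $\Lambda_{\mathfrak{m}}=\sh{A}_x$. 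Hence $\Ext^1_\Lambda(N,-)=0$, so $N$ is projective over $\Lambda$; thus $\sh{K}$ is locally projective and $\pd_{\sh{A}}(\sh{F})\le d$.

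For the stalk-realization step: given a finitely generated $\sh{A}_x$-module $M$, write $M=\mathrm{coker}(\sh{A}_x^{\,b}\to\sh{A}_x^{\,a})$, spread the finitely many entries of the defining matrix over a small affine neighbourhood of $x$ to obtain a coherent $\sh{A}$-module there with stalk $M$ at $x$, and extend it to a coherent $\sh{A}$-module on all of $X$ by the standard extension of coherent sheaves on a noetherian scheme, applied to a presentation $\sh{A}\otimes\sh{E}\twoheadrightarrow M$ and its kernel (replacing the extended kernel by the $\sh{A}$-submodule it generates to retain the module structure).

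The step I expect to be the main obstacle is exactly the passage from ``projective stalks'' to ``locally projective'': the local-to-global spectral sequence $H^p(X,\mathcal{E}xt^q_{\sh{A}}(\sh{F},\sh{G}))\Rightarrow\Ext^{p+q}_{\sh{A}}(\sh{F},\sh{G})$, combined with $\mathcal{E}xt^q_{\sh{A}}(\sh{F},\sh{G})_x\cong\Ext^q_{\sh{A}_x}(\sh{F}_x,\sh{G}_x)$, only yields the weaker bound $\dim(\sh{A})\le\sup_x\mathrm{gldim}(\sh{A}_x)+\dim X$, so one genuinely needs the syzygy argument above, whose substance is that $\Ext^1_\Lambda(N,-)$ detects projectivity and commutes with localization for $N$ finitely presented over a noetherian $\Lambda$. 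The remaining ingredients — exactness of taking stalks, that a syzygy past the projective dimension is projective, and the sheaf-extension bookkeeping — are routine.
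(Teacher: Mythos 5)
Your argument is correct, and it is considerably more than the paper offers: the paper does not prove this lemma at all, but disposes of it with a pointer to the local-to-global spectral sequence and to \cite[Theorem 4.4]{MR01}, the latter being precisely the affine statement that a noetherian algebra module-finite over a commutative noetherian ring has global dimension equal to the supremum of the global dimensions of its localizations at maximal ideals. Your syzygy argument is essentially a self-contained proof of that cited result, globalized sheaf-theoretically, and your observation that the local-to-global spectral sequence alone only yields the weaker bound $\dim(\sh{A})\le\sup_x\mathrm{gldim}(\sh{A}_x)+\dim X$ correctly identifies why the citation to \cite{MR01} is the real content. Two small remarks. First, in the passage from ``all stalks projective'' to ``projective on $U$'' you do not need $\Ext^1_\Lambda(N,P)=0$ for arbitrary $P$: writing $0\to K\to\Lambda^n\to N\to 0$ with $K$ finitely generated (as $\Lambda$ is noetherian), it suffices to kill $\Ext^1_\Lambda(N,K)$, which is a finitely generated $R$-module, so vanishing at all maximal ideals (equivalently all closed points, since $X$ is of finite type over $\mathbb{C}$) does give vanishing. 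Second, note that your proof establishes the lemma for $\dim(\sh{A})$ in the sense of characterization (1) of the preceding Proposition (supremum of projective dimensions); matching this with the cohomological dimension of $\Coh(\sh{A})$ is exactly what that Proposition asserts, so your argument slots in cleanly. The stalk-realization step is routine but necessary, and your spreading-out plus $\sh{A}\cdot\sh{G}$ extension handles it correctly.
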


\subsection{Serre duality} Since $X$ is smooth, the derived category $\der(\Coh(X))$ admits a Serre functor $(-)\otimes\omega_X[n]$, where $\omega_X$ is the dualizing sheaf and $n=\dim(X)$.

\begin{proposition}
If $\sh{A}$ is smooth, then the derived category $\der(\Coh(\sh{A}))$ admits a Serre functor $\omega_{\sh{A}}\ltensor_{\sh{A}}(-)[n]$, where $\omega_{\sh{A}}=\pi^!\omega_X=\SHom_{\sh{O}_X}(\sh{A},\omega_X)$ is the dualizing $\sh{A}$-bimodule. 
\end{proposition}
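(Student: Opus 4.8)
The plan is to deduce Serre duality for $\der(\Coh(\sh{A}))$ from Serre duality on $X$ together with the adjunction $\pi^* \dashv \pi_* \dashv \pi^!$. Concretely, I want to exhibit, for $\sh{F},\sh{G}\in\der(\Coh(\sh{A}))$, a natural perfect pairing (equivalently, a natural isomorphism)
\[
\Hom_{\der(\Coh(\sh{A}))}(\sh{F},\sh{G})\;\cong\;\Hom_{\der(\Coh(\sh{A}))}\!\big(\sh{G},\,\omega_{\sh{A}}\ltensor_{\sh{A}}\sh{F}[n]\big)^{\vee}.
\]
First I would reduce to checking the defining property of a Serre functor on a generating set of objects. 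Since $\sh{A}$ is locally free and smooth, the objects $\pi^*\sh{P}=\sh{A}\otimes\sh{P}$ for $\sh{P}$ locally free on $X$ generate $\der(\Coh(\sh{A}))$ (every coherent $\sh{A}$-module has a finite locally projective resolution, and locally projective $\sh{A}$-modules are summands of such $\pi^*\sh{P}$ locally, which suffices after passing to the derived category), so it is enough to produce the isomorphism functorially when $\sh{F}=\pi^*\sh{P}$.

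The second step is the computation itself. Using the hom-tensor adjunctions recalled in the excerpt, for $\sh{F}=\sh{A}\otimes\sh{P}$ one has
\[
R\Hom_{\sh{A}}(\sh{A}\otimes\sh{P},\sh{G})\;\cong\;R\Hom_{\sh{O}_X}(\sh{P},\pi_*\sh{G}),
\]
and on the other side, since $\omega_{\sh{A}}=\SHom_{\sh{O}_X}(\sh{A},\omega_X)=\pi^!\omega_X$, a projection-formula / adjunction manipulation gives
\[
\omega_{\sh{A}}\ltensor_{\sh{A}}(\sh{A}\otimes\sh{P})\;\cong\;\omega_X\otimes\sh{P}
\]
as an object whose underlying $\sh{O}_X$-complex is $\pi_*$ of it, so that
\[
R\Hom_{\sh{A}}\!\big(\sh{G},\,\omega_{\sh{A}}\ltensor_{\sh{A}}(\sh{A}\otimes\sh{P})[n]\big)\;\cong\;R\Hom_{\sh{O}_X}\!\big(\pi_*\sh{G},\,\omega_X\otimes\sh{P}[n]\big).
\]
Now apply ordinary Serre duality on the smooth variety $X$ to the right-hand sides: $R\Hom_{\sh{O}_X}(\sh{P},\pi_*\sh{G})$ is dual to $R\Hom_{\sh{O}_X}(\pi_*\sh{G},\sh{P}\otimes\omega_X[n])$. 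Taking $H^0$ and comparing yields the desired perfect pairing for $\sh{F}=\pi^*\sh{P}$, and naturality in $\sh{P}$ and $\sh{G}$ is inherited from naturality of the adjunction isomorphisms and of Serre duality on $X$. A formal dévissage then extends it to all $\sh{F}$, and one checks the pairing is compatible with composition, i.e. that $\omega_{\sh{A}}\ltensor_{\sh{A}}(-)[n]$ is genuinely a Serre functor and not merely that the two Hom-spaces are abstractly dual.

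The main obstacle I expect is bookkeeping around left/right module structures: $\omega_{\sh{A}}=\SHom_{\sh{O}_X}(\sh{A},\omega_X)$ is an $\sh{A}$-bimodule, and one must be careful which action is used for the derived tensor $\omega_{\sh{A}}\ltensor_{\sh{A}}(-)$ and which remains to make the result an object of $\der(\Coh(\sh{A}))$ again; the identification $\omega_{\sh{A}}\ltensor_{\sh{A}}(\sh{A}\otimes\sh{P})\cong\omega_X\otimes\sh{P}$ as $\sh{A}$-modules (not just $\sh{O}_X$-modules) needs the bimodule structure tracked honestly. A secondary technical point is ensuring all the $R$-functors are well-behaved — this is where local freeness of $\sh{A}$ and finiteness of global dimension are used, so that $\pi^!$ is computed by the naive $\SHom_{\sh{O}_X}(\sh{A},-)$ with no higher derived terms, locally projective resolutions are finite, and $R\Hom$ between coherent complexes has finite-dimensional cohomology so that the duals make sense. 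Once these identifications are set up cleanly, the proof is a short diagram chase.
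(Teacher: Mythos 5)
Your argument is correct in substance but organized differently from the paper's. The paper works with arbitrary perfect complexes $\sh{F},\sh{G}$ at once: it writes $\Hom_{\sh{A}}(\sh{F},\sh{G})=\Hom_{\sh{O}_X}(\sh{O}_X,R\SHom_{\sh{A}}(\sh{F},\sh{G}))$, applies Serre duality on $X$ to the $\sh{O}_X$-complex $R\SHom_{\sh{A}}(\sh{F},\sh{G})$, and then uses the perfect-complex identities $R\SHom_{\sh{A}}(\sh{F},\sh{G})\cong R\SHom_{\sh{A}}(\sh{F},\sh{A})\otimes_{\sh{A}}\sh{G}$ and $R\SHom_{\sh{A}}(\sh{F},\sh{A})^{\vee}\cong\sh{A}^{\vee}\otimes_{\sh{A}}\sh{F}$ to move everything back into $\Hom_{\sh{A}}(\sh{G},-)^*$. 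You instead specialize to generators $\pi^*\sh{P}$, where those identities become trivial, run the two adjunctions $\pi^*\dashv\pi_*\dashv\pi^!$ to land in $\Hom_{\sh{O}_X}(\sh{P},\pi_*\sh{G})$ and $\Hom_{\sh{O}_X}(\pi_*\sh{G},\omega_X\otimes\sh{P}[n])$, and then extend by d\'evissage; this buys a more elementary computation at the cost of having to justify generation (finite global dimension plus closure under summands, since the $d$-th syzygy is only locally projective, not globally of the form $\sh{A}\otimes\sh{P}$) and compatibility with composition, both of which you flag but do not carry out. One genuine slip, precisely in the bimodule bookkeeping you warn about: $\omega_{\sh{A}}\ltensor_{\sh{A}}(\sh{A}\otimes\sh{P})$ is \emph{not} $\omega_X\otimes\sh{P}$; it is $\omega_{\sh{A}}\otimes_{\sh{O}_X}\sh{P}\cong\SHom_{\sh{O}_X}(\sh{A},\omega_X\otimes\sh{P})=\pi^!(\omega_X\otimes\sh{P})$. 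Your displayed conclusion $R\Hom_{\sh{A}}(\sh{G},\omega_{\sh{A}}\ltensor_{\sh{A}}\pi^*\sh{P}[n])\cong R\Hom_{\sh{O}_X}(\pi_*\sh{G},\omega_X\otimes\sh{P}[n])$ is nevertheless correct — it is exactly the adjunction $\pi_*\dashv\pi^!$ applied to $\pi^!(\omega_X\otimes\sh{P})$ — so the error is in the stated intermediate isomorphism, not in the step it is used for; you should state that intermediate object as $\pi^!(\omega_X\otimes\sh{P})$ and cite the adjunction explicitly.
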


\begin{proof}
For any perfect complexes $\sh{F}$ and $\sh{G}$ of $\sh{A}$-modules, we have natural isomorphisms
\begin{align*}
    \Hom_{\sh{A}}(\sh{F},\sh{G}) & = \Hom_{\sh{O}_X}(\sh{O}_X,R\SHom_{\sh{A}}(\sh{F},\sh{G}))\\
     & \cong\Hom_{\sh{O}_X}(R\SHom_{\sh{A}}(\sh{F},\sh{G}),\omega_X[n])^*\\
     & \cong\Hom_{\sh{O}_X}(R\SHom_{\sh{A}}(\sh{F},\sh{A})\otimes_{\sh{A}}\sh{G},\omega_X[n])^*\\
     & \cong\Hom_{\sh{A}}(\sh{G},R\SHom_{\sh{A}}(\sh{F},\sh{A})^{\vee}\otimes\omega_X[n])^* \\
     & \cong \Hom_{\sh{A}}(\sh{G},\sh{A}^{\vee}\otimes_{\sh{A}}\sh{F}\otimes\omega_X[n])^* \\
     & = \Hom_{\sh{A}}(\sh{G},\omega_{\sh{A}}\otimes_{\sh{A}}\sh{F}[n])^*.
\end{align*}
\end{proof}

\begin{definition}
We say $\sh{A}$ is Calabi--Yau of dimension $n$ if the derived category $\der(\Coh(\sh{A}))$ is a Calabi--Yau-$n$ category, i.e., the Serre functor is equivalent to  $(-)[n]$.
\end{definition}

In particular if $\sh{A}$ is Calabi--Yau, then $\sh{A}$ is smooth and $\dim(\sh{A})=\dim(X)$.

\begin{proposition}
Suppose $\sh{A}$ is smooth, then the followings are equivalent:
\begin{enumerate}
    \item $\sh{A}$ is Calabi--Yau;
    \item There is an isomorphism $\sh{A}\to\omega_{\sh{A}}$ of $\sh{A}$-bimodules;
    \item There is a non-degenerate bilinear form
    \[
    \sigma:\sh{A}\otimes\sh{A}\to\omega_X
    \]
    of $\sh{O}_X$-modules such that $\sigma$ is symmetric and the diagram
    \[
    \xymatrix{
    \sh{A}\otimes\sh{A}\otimes\sh{A}\ar[r]^{m\otimes\id}\ar[d]_{\id\otimes m} & \sh{A}\otimes\sh{A}\ar[d]^{\sigma}\\
    \sh{A}\otimes\sh{A}\ar[r]_{\sigma} & \omega_X
    }
    \]
    commutes. In other words, $(\sh{A},\sigma)$ is a family of symmetric Frobenius algebras over $X$.
\end{enumerate}
\end{proposition}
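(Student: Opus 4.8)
The plan is to prove the equivalences $(1)\Leftrightarrow(2)\Leftrightarrow(3)$ in a cycle, using the Serre-functor description of $\omega_{\sh{A}}$ established in the previous proposition as the main input.

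For $(1)\Leftrightarrow(2)$: by the preceding proposition the Serre functor on $\der(\Coh(\sh{A}))$ is $\omega_{\sh{A}}\ltensor_{\sh{A}}(-)[n]$, so $\sh{A}$ is Calabi--Yau exactly when this functor is isomorphic to $(-)[n]$, i.e. when $\omega_{\sh{A}}\ltensor_{\sh{A}}(-)$ is isomorphic to the identity functor on $\der(\Coh(\sh{A}))$. A functor given by tensoring with a bimodule complex is the identity iff the bimodule complex is isomorphic to $\sh{A}$ itself as a bimodule (this is the standard argument: evaluate the natural isomorphism on the regular bimodule $\sh{A}$, noting $\omega_{\sh{A}}\ltensor_{\sh{A}}\sh{A}\cong\omega_{\sh{A}}$, and track bimodule structure; conversely tensoring with $\sh{A}$ is obviously the identity). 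The only point requiring care is that since $\sh{A}$ is assumed only smooth (not a priori $\dim(\sh{A})=\dim(X)$), one must check $\omega_{\sh{A}}$ is concentrated in a single degree and is actually a bimodule rather than a complex; but if the Serre functor is $(-)[n]$ then $\omega_{\sh{A}}$ is forced to be $\sh{A}[0]$ as a complex, giving both the bimodule isomorphism and $\dim(\sh{A})=\dim(X)$ as remarked after the definition.

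For $(2)\Leftrightarrow(3)$: unwind the definition $\omega_{\sh{A}}=\SHom_{\sh{O}_X}(\sh{A},\omega_X)$. By the tensor-hom adjunction recalled at the start of this section, an $\sh{O}_X$-module map $\sh{A}\to\SHom_{\sh{O}_X}(\sh{A},\omega_X)$ is the same datum as an $\sh{O}_X$-bilinear form $\sigma:\sh{A}\otimes\sh{A}\to\omega_X$, and this map is an isomorphism of $\sh{O}_X$-modules iff $\sigma$ is non-degenerate (using that $\sh{A}$ is locally free, so $\SHom_{\sh{O}_X}(\sh{A},\omega_X)$ is locally free of the same rank). It then remains to translate the two sides of the $\sh{A}$-bimodule structure on $\omega_{\sh{A}}$ into conditions on $\sigma$: compatibility with the \emph{left} $\sh{A}$-action corresponds to the associativity square in (3) (associativity of $\sigma$ with respect to the multiplication $m$), and compatibility with the \emph{right} $\sh{A}$-action, once the left-module map is an isomorphism, is equivalent to symmetry of $\sigma$. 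This is the bimodule/Frobenius dictionary: a Frobenius-algebra structure on $\sh{A}$ over $\sh{O}_X$ is precisely an isomorphism $\sh{A}\cong\omega_{\sh{A}}$ of $\sh{A}$-bimodules, and it is symmetric Frobenius exactly when the form is symmetric. I would carry this out locally on an affine open $U=\Spec R$ with $\sh{A}|_U$ a Frobenius $R$-algebra that is projective as an $R$-module, where all of this is classical, then note the identifications glue since they are canonical.

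The main obstacle is the bookkeeping in $(2)\Leftrightarrow(3)$: keeping straight which of the left/right $\sh{A}$-module structures on $\SHom_{\sh{O}_X}(\sh{A},\omega_X)$ matches which factor of $\sh{A}\otimes\sh{A}$ in $\sigma$, and verifying that once the left-linearity yields an isomorphism the right-linearity is genuinely equivalent to symmetry (not merely implied by it). I expect no serious difficulty in $(1)\Leftrightarrow(2)$ beyond the degeneration-of-the-complex point noted above, which follows formally from the Serre-functor characterization together with the fact that an autoequivalence isomorphic to a shift forces its defining bimodule complex to be a shift of an honest bimodule.
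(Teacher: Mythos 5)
Your proposal is correct and follows essentially the same route as the paper: the paper likewise treats $(1)\Leftrightarrow(2)$ as immediate from the Serre functor $\omega_{\sh{A}}\ltensor_{\sh{A}}(-)[n]$, and proves $(2)\Leftrightarrow(3)$ by identifying non-degenerate forms with $\sh{O}_X$-module isomorphisms $\sh{A}\to\SHom_{\sh{O}_X}(\sh{A},\omega_X)=\omega_{\sh{A}}$ and checking the bimodule compatibility locally on affine opens. Your version merely spells out the Frobenius bookkeeping (left-linearity $\leftrightarrow$ invariance, right-linearity $\leftrightarrow$ symmetry) that the paper declares straightforward.
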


\begin{proof}
It is clear that (1) and (2) are equivalent. For (2) and (3), a bilinear form $\sigma:\sh{A}\otimes\sh{A}\to\omega_X$ is non-degenerate if and only if it induces an isomorphism
\[
\sh{A}\to\sh{A}^{\vee}\otimes\omega_X\cong\SHom_{\sh{O}_X}(\sh{A},\omega_X)=\omega_{\sh{A}}
\]
of $\sh{O}_X$-modules. To check it is a morphism of $\sh{A}$-bimodules, we may reduce to affine open sets $\Spec(R)$, where $R$ is regular. Then it is straightforward to verify that two statements are equivalent.
\end{proof}

\begin{example}
Let $X$ be a Calabi--Yau variety. Then any Azumaya algebra $\sh{A}$ on $X$ is Calabi--Yau.
\end{example}

\subsection{Quantum Fermat quintic threefolds}

Let $A$ be a quantum Fermat quintic threefold. To associate a pair $(X,\sh{A})$, we take
\[
B=\mathbb{C}[x_0,\ldots, x_4]\big/\left(\sum_{k=0}^4 x_k\right)
\]
and $X=\Proj(B)\cong\mathbb{P}^3$. Since $A$ is a graded-free $B$-module, the sheaf $\sh{A}$ of non-commutative $\sh{O}_X$-algebras induced by $A$ is locally free. In fact,
\[
\sh{A}=\sh{O}_X\oplus\sh{O}_X(-1)^{\oplus 121}\oplus\sh{O}_X(-2)^{\oplus 381}\oplus\sh{O}_X(-3)^{\oplus 121}\oplus\sh{O}_X(-4)
\]
as a $\sh{O}_X$-module.

It is shown in \cite{Kan15} that the graded algebra $A$ is of finite global dimension, so $\sh{A}$ also has finite global dimension. This point of view gives an alternative proof that $(X,\sh{A})$ is Calabi--Yau.

\begin{lemma}\label{lem:A-is-CY}
The sheaf $\sh{A}$ of $\sh{O}_X$-algebras is Fobenius via
\[
(-,-):\sh{A}\otimes_{\sh{O}_X}\sh{A}\to\sh{A}\to\omega_X,
\]
where the first arrow is the multiplication map, and the second arrow is the projection to the component $\sh{O}_X(-4)\cong\omega_X$. If $\prod_{j} q_{ij}=1$ for all $i$, then the pairing is symmetric.
\end{lemma}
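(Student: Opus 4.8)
The plan is to carry out everything in the explicit monomial frame of $\sh{A}$ underlying the displayed decomposition. Write $\mathbf{t}^{b}:=t_0^{b_0}\cdots t_4^{b_4}$ for a multi-index $b=(b_0,\dots,b_4)$ and $|b|:=\sum_i b_i$. Since each $q_{ij}$ is a $5$th root of unity, every $t_i^{5}=x_i$ is central, so any monomial in the $t_i$ rewrites as an $\sh{O}_X$-multiple of some $\mathbf{t}^{b}$ with $0\le b_i\le 4$; the first step is to record that the $\mathbf{t}^{b}$ with $0\le b_i\le 4$ and $|b|\equiv 0\pmod{5}$ realise the displayed decomposition as $\sh{A}=\bigoplus_{b}\sh{O}_X\,\mathbf{t}^{b}$ with $\sh{O}_X\,\mathbf{t}^{b}\cong\sh{O}_X(-|b|/5)$. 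This identifies the distinguished summand $\sh{O}_X(-4)\cong\omega_X$ as the line $\sh{O}_X\,\mathbf{t}^{\mathbf 4}$, $\mathbf 4:=(4,4,4,4,4)$, and the projection $\epsilon\colon\sh{A}\to\omega_X$ as ``coefficient of $\mathbf{t}^{\mathbf 4}$'' (moreover $\Hom_{\sh{O}_X}(\sh{A},\omega_X)\cong\mathbb{C}$ by this decomposition, so $\epsilon$ is the unique $\sh{O}_X$-linear map $\sh{A}\to\omega_X$ up to scalar, and the pairing $(-,-)$ does not depend on the chosen splitting). The second step is the multiplication rule: sorting $\mathbf{t}^{b}\cdot\mathbf{t}^{b'}$ by $t_jt_i=q_{ji}t_it_j$ and extracting the central powers $t_i^{5}=x_i$ gives
\[
\mathbf{t}^{b}\cdot\mathbf{t}^{b'}=q(b,b')\,\mathbf{x}^{c(b,b')}\,\mathbf{t}^{b''},\qquad q(b,b')=\prod_{i<j}q_{ji}^{\,b_jb'_i}\in\mathbb{C}^{*},
\]
with $b''_i\equiv b_i+b'_i\pmod{5}$, $0\le b''_i\le 4$, $c(b,b')_i=(b_i+b'_i-b''_i)/5\ge 0$ and $\mathbf{x}^{c}=\prod_i x_i^{c_i}$.

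Granting these, non-degeneracy drops out: since $0\le b_i+b'_i\le 8$, the condition $b''=\mathbf 4$ forces $b_i+b'_i=4$ for all $i$, hence $b'=\mathbf 4-b$ and $c(b,b')=0$, so the pairing $(u,v)\mapsto\epsilon(uv)$ takes the frame value $q(b,\mathbf 4-b)$ when $b'=\mathbf 4-b$ and $0$ otherwise. Thus the induced map $\sh{A}\to\SHom_{\sh{O}_X}(\sh{A},\omega_X)=\omega_{\sh{A}}$ sends the line $\sh{O}_X\,\mathbf{t}^{b}\cong\sh{O}_X(-|b|/5)$ isomorphically (by multiplication by the nonzero scalar $q(b,\mathbf 4-b)$) onto the line of $\omega_{\sh{A}}$ dual to $\mathbf{t}^{\mathbf 4-b}$; being block-antidiagonal with invertible blocks, it is an isomorphism, so $(-,-)$ is non-degenerate. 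The Frobenius square $\epsilon((ab)c)=\epsilon(a(bc))$ is immediate from associativity of the multiplication on $\sh{A}$. Hence $(\sh{A},(-,-))$ is a family of Frobenius algebras over $X$, and — once symmetry is in hand — $\sh{A}$ is Calabi--Yau by the Proposition above characterising the Calabi--Yau condition.

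For symmetry under the hypothesis $\prod_j q_{ij}=1$ for all $i$: the vanishing pattern above is symmetric in $b,b'$, so it suffices to show $q(b,\mathbf 4-b)=q(\mathbf 4-b,b)$ for every frame index $b$. Their ratio simplifies to
\[
S:=\prod_{i<j}q_{ji}^{\,b_j(4-b_i)-(4-b_j)b_i}=\prod_{i<j}q_{ji}^{\,4(b_j-b_i)}.
\]
Using $q_{ji}=q_{ij}^{-1}$ and $q_{ii}=1$, the square re-expands over all ordered pairs as
\[
S^{2}=\prod_{i\neq j}q_{ij}^{\,4(b_i-b_j)}=\Big(\prod_i\big(\textstyle\prod_j q_{ij}\big)^{4b_i}\Big)\Big(\prod_j\big(\textstyle\prod_i q_{ij}\big)^{-4b_j}\Big),
\]
and both inner products equal $1$ — the first by hypothesis, the second because $\prod_i q_{ij}=\prod_i q_{ji}^{-1}=1$ too. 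Hence $S^{2}=1$; since $S$ is a $5$th root of unity and $\gcd(2,5)=1$, $S=1$, so $(-,-)$ is symmetric.

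The sorting computation behind the multiplication rule, and the bookkeeping of twists in the decomposition of $\sh{A}$, are routine. I expect the only step with real content to be the symmetry identity $S=1$, which is the sole place where the hypothesis $\prod_j q_{ij}=1$ is used; a little care is also needed at the outset to pin down unambiguously which rank-one summand of $\sh{A}$ is $\omega_X$ (handled above via $\Hom_{\sh{O}_X}(\sh{A},\omega_X)\cong\mathbb{C}$), so that $\epsilon$ and hence $(-,-)$ are well defined.
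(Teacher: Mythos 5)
Your proof is correct and follows essentially the same route as the paper: the same monomial decomposition $\sh{A}=\bigoplus_{b}\sh{O}_X\,\mathbf{t}^{b}$, the same explicit multiplication rule, non-degeneracy via the anti-diagonal pairing of $\mathbf{t}^{b}$ with $\mathbf{t}^{\mathbf 4-b}$, and symmetry reduced to the scalar identity $q(b,\mathbf 4-b)=q(\mathbf 4-b,b)$. The only cosmetic difference is at the very end, where the paper rewrites $\prod_{i>j}q_{ij}^{\,b_i-b_j}$ directly as $\prod_i\bigl(\prod_j q_{ij}\bigr)^{b_i}$ instead of your squaring argument; both give $S=1$.
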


\begin{proof}
We write down the multiplication maps of $\sh{A}$ explicitly. Consider
\[
I=\Big\{\chch{a}=(a_0,a_1,\ldots,a_4)\in\{0,1,\ldots,4\}^5, a_0+a_1+\ldots+a_4\text{ is a multiple of $5$}\Big\},
\]
a basis of $A^{(5)}$ over $B=B^{(5)}$. For simplicity, we will write $a_0+a_1+\ldots+a_4=5\,|\chch{a}|$. Note that $I$ is naturally an abelian group as a subgroup of $(\mathbb{Z}/5\mathbb{Z})^5$ (but the function $|-|$ is not linear). Then as a $\sh{O}_X$-module, we may write
\[
\sh{A}=\bigoplus_{\chch{a}\in I}\sh{O}_X(-|\chch{a}|).
\]
We denote the multiplication map $\sh{A}\otimes\sh{A}\to\sh{A}$ on each component by
\[
\sh{O}_X(-|\chch{a}|)\otimes\sh{O}_X(-|\chch{b}|)\xrightarrow{\phi_{\chch{a},\chch{b}}}\sh{O}_X(-|\chch{a}+\chch{b}|),
\]
where
\[
\phi_{\chch{a},\chch{b}}=q_{\chch{a},\chch{b}}x_0^{c_0}x_1^{c_1}x_2^{c_2}x_3^{c_3}x_4^{c_4}
\]
is the section in $H^0\big(X,\sh{O}_X(|\chch{a}|+|\chch{b}|-|\chch{a}+\chch{b}|)\big)$ given by
\[
q_{\chch{a},\chch{b}}=\prod_{i>j}q_{ij}^{a_i b_j},\quad
c_i=\begin{cases}
5, & a_i+b_i\geq 5;\\
0, & a_i+b_i<5.
\end{cases}
\]

Write $\chch{4}=(4,4,4,4,4)\in I$, $\sh{O}_X(-|\chch{4}|)=\sh{O}_X(-4)$ is the component corresponding to $\omega_X$. Since for each component $\sh{O}_X(-|\chch{a}|)$, there is a unique component $\sh{O}_X(-|\chch{4}-\chch{a}|)$ such that the multiplication map
\[
\sh{O}_X(-|\chch{a}|)\otimes\sh{O}_X(-|\chch{4}-\chch{a}|)\to\sh{O}_X(-|\chch{4}|)\cong\omega_X
\]
is an isomorphism, the induced map $\sh{A}\to\SHom_{\sh{O}_X}(\sh{A},\omega_X)$ is an isomorphism of $\sh{O}_X$-modules.

The pairing $(-,-)$ is symmetric if and only if $q_{\chch{a},\chch{4}-\chch{a}}=q_{\chch{4}-\chch{a},\chch{a}}$ for all $\chch{a}\in I$. That is,
\[
\prod_{i>j} q_{ij}^{a_i(4-a_j)}=\prod_{i>j} q_{ij}^{(4-a_i)a_j}\iff \prod_{i>j} q_{ij}^{a_i-a_j}=1,
\]
which is equivalent to that $\prod_{j}q_{ij}=1$ for all $i$.
\end{proof}

\subsection{Simpson moduli spaces}

We fix a polarization $\sh{O}_X(1)$ on $X$. Stability conditions for coherent $\sh{A}$-modules and their moduli spaces have been studied by Simpson (\cite{Sim94}) for general sheaf $\sh{A}$ of non-commutative algebras on $X$. We recall their definitions and main results.

For a coherent $\sh{A}$-module, we define its Hilbert polynomial, rank, slope, and support to be the same as its underlying coherent sheaf on $X$ (with respect to $\sh{O}_X(1)$). In particular, we say a coherent $\sh{A}$-module is pure (of dimension $d$) if its underlying coherent sheaf is so.

\begin{definition}[\cite{Sim94}]
A coherent $\sh{A}$-module $\sh{F}$ is (semi)stable if it is pure, and for any non-trivial $\sh{A}$-submodule $\sh{G}\subset\sh{F}$,
\[
\frac{p_X(\sh{G})(m)}{r(\sh{G})}\leqp\frac{p_X(\sh{F})(m)}{r(\sh{F})},
\]
for sufficiently large $m$, where $p_X$ is the Hilbert polynomial, and $r$ is the rank.
\end{definition}

It was shown in \cite{Sim94} that all standard facts for semistable sheaves (cf. \cite{HL10}) are also true for semistable $\sh{A}$-modules, such as
\begin{itemize}
    \item Any pure coherent $\sh{A}$-module $\sh{F}$ has a unique filtration, called the \emph{Harder--Narasimhan filtration},
    \[
    0=\sh{F}_0\subset\sh{F}_1\subset\ldots\subset\sh{F}_k=\sh{F}
    \]
    of coherent $\sh{A}$-modules such that the quotients $\sh{F}_i/\sh{F}_{i-1}$'s are semistable $\sh{A}$-modules with strictly decreasing reduced Hilbert polynomials.
    \item Any semistable $\sh{A}$-module $\sh{F}$ has a filtration, called a \emph{Jordan--H\"{o}lder filtration},
    \[
    0=\sh{F}_0\subset\sh{F}_1\subset\ldots\subset\sh{F}_k=\sh{F}
    \]
    of coherent $\sh{A}$-modules such that the quotients $\JH_i:=\sh{F}_i/\sh{F}_{i-1}$'s are stable $\sh{A}$-modules with the same reduced Hilbert polynomial (as $\sh{F}$). Furthermore, the polystable $\sh{A}$-module $\JH(\sh{F}):=\oplus_i\JH_i$ does not depend on the filtration (up to isomorphic). We say two semistable $\sh{A}$-modules $\sh{F}$ and $\sh{G}$ are \emph{S-equivalent} if $\JH(\sh{F})\cong\JH(\sh{G})$ as $\sh{A}$-modules.
    \item If $\sh{A}$ is a stable $\sh{A}$-module, then $\Hom_{\sh{A}}(\sh{F},\sh{F})=\mathbb{C}$.
\end{itemize}

Let $h$ be a polynomial.

\begin{proposition}[\cite{Sim94}]\label{prop:hilb-A}
The Hilbert scheme $\Hilb^h(X,\sh{A})$ parameterizing quotients $\sh{A}\to\sh{F}$ as coherent $\sh{A}$-modules with $p_X(\sh{F})=h$ is representable by a projective scheme. In fact, it is the closed subscheme of the Quot scheme $\Quot_X^p(\sh{A})$ (who parameterizes $\sh{O}_X$-module quotients of $\sh{A}$) given by the locus that the universal quotient is a morphism of $\sh{A}$-modules.
\end{proposition}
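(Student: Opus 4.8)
The plan is to realize $\Hilb^h(X,\sh{A})$ as a closed subscheme of Grothendieck's Quot scheme $\Quot^h_X(\sh{A})$ of $\sh{O}_X$-module quotients of $\sh{A}$ with Hilbert polynomial $h$, cut out by the single condition that the universal quotient be a morphism of $\sh{A}$-modules; projectivity is then inherited from $\Quot^h_X(\sh{A})$. First I would set up the functors: to a $\mathbb{C}$-scheme $S$, $\underline{\Hilb}^h(X,\sh{A})$ assigns the set of surjections $q\colon\sh{A}_S\twoheadrightarrow\sh{F}$ in $\Coh(\sh{A}_S)$ with $\sh{F}$ flat over $S$ and fibrewise Hilbert polynomial $h$ on $X$, while $\underline{\Quot}^h_X(\sh{A})$ assigns the analogous set of surjections of $\sh{O}_{X\times S}$-modules. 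Forgetting the $\sh{A}_S$-linearity gives a natural transformation $\underline{\Hilb}^h(X,\sh{A})\to\underline{\Quot}^h_X(\sh{A})$, which is a monomorphism because an $\sh{A}_S$-module structure on $\sh{F}$ making $q$ linear, if it exists, is unique (the action is forced by $a\cdot q(x)=q(ax)$ since $q$ is an epimorphism). By Grothendieck's theorem (cf.\ \cite{HL10}), $\underline{\Quot}^h_X(\sh{A})$ is represented by a projective scheme $Q:=\Quot^h_X(\sh{A})$; write $q_{\mathrm{univ}}\colon\sh{A}_Q\twoheadrightarrow\sh{F}$ for the universal quotient and $\sh{K}:=\ker q_{\mathrm{univ}}$. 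Since $\sh{F}$ is flat over $Q$, the exact sequence $0\to\sh{K}\to\sh{A}_Q\to\sh{F}\to 0$ stays exact after any base change $T\to Q$, so $\sh{K}_T=\ker(\sh{A}_T\twoheadrightarrow\sh{F}_T)$.

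The key observation is that an $\sh{O}_{X\times S}$-module quotient $q\colon\sh{A}_S\twoheadrightarrow\sh{F}$ underlies an $\sh{A}_S$-module quotient if and only if $\ker q\subseteq\sh{A}_S$ is a left $\sh{A}_S$-submodule, i.e.\ if and only if the composite
\[
\mu_q\colon\ \sh{A}_S\otimes_{\sh{O}_{X\times S}}\ker q\ \xrightarrow{\ \id\otimes\iota\ }\ \sh{A}_S\otimes_{\sh{O}_{X\times S}}\sh{A}_S\ \xrightarrow{\ m\ }\ \sh{A}_S\ \xrightarrow{\ q\ }\ \sh{F}
\]
vanishes, where $m$ is the multiplication of $\sh{A}_S$ and $\iota$ the inclusion $\ker q\hookrightarrow\sh{A}_S$. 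Applied to $q_{\mathrm{univ}}$ this produces a morphism $\mu\colon\sh{A}_Q\otimes\sh{K}\to\sh{F}$ of coherent sheaves on $X\times Q$, and because the formation of $\sh{K}$ and of $\mu$ commutes with base change (the only input being the $Q$-flatness of $\sh{F}$), the pullback $q_{\mathrm{univ},T}\colon\sh{A}_T\twoheadrightarrow\sh{F}_T$ underlies an $\sh{A}_T$-module quotient precisely when $\mu_T=0$.

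It remains to know that the vanishing locus of $\mu$ is representable by a closed subscheme of $Q$. Since $X\times Q\to Q$ is projective and the target $\sh{F}$ is flat over $Q$, the relative Hom functor $T\mapsto\Hom_{\sh{O}_{X\times T}}\big((\sh{A}_Q\otimes\sh{K})_T,\sh{F}_T\big)$ is represented by a $Q$-scheme $V\to Q$ which is affine, hence separated, and which carries a zero section $z\colon Q\to V$; the morphism $\mu$ is itself a $Q$-point $s_\mu\colon Q\to V$. Let $H\subseteq Q$ be the equalizer of $s_\mu$ and $z$, i.e.\ the locus on which $s_\mu$ agrees with the zero section; this is a closed subscheme because $V\to Q$ is separated. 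By construction a morphism $T\to Q$ factors through $H$ if and only if $\mu_T=0$, which by the previous paragraph is exactly the condition that the pulled-back quotient be an $\sh{A}_T$-module quotient; hence $H$ represents $\underline{\Hilb}^h(X,\sh{A})$. As $H$ is a closed subscheme of the projective scheme $\Quot^h_X(\sh{A})$, it is projective, which gives the assertion.

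The only genuine difficulty I anticipate is the base-change bookkeeping in the second paragraph: one must check carefully that for an arbitrary test scheme $T$ the condition ``$\mu_T=0$'' is literally the same as ``$q_{\mathrm{univ},T}$ is $\sh{A}_T$-linear'', which rests on $\sh{F}$ being $Q$-flat (so that kernels and the formation of $\mu$ commute with base change) together with the precise representability of the relative Hom scheme when the target is flat. None of these ingredients is new --- this is precisely the mechanism behind Simpson's construction in \cite{Sim94} --- so the writeup mainly consists in assembling standard facts about Quot and Hom schemes.
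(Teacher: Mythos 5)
Your proposal is correct and follows exactly the construction the paper itself describes (and attributes to Simpson without proof): cut out the locus in $\Quot_X^h(\sh{A})$ where the universal quotient is $\sh{A}$-linear, using flatness of the universal quotient for base change of the kernel and the representability of the relative Hom functor by an affine linear scheme over the Quot scheme to see the locus is closed. The details you supply are the standard ones and I see no gap.
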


The moduli spaces of (semi)stable $\sh{A}$-modules were constructed in the same way as the ones for (semi)stable sheaves, which is via GIT quotient on certain Hilbert (Quot) schemes. We omit the details, and the main results can be stated as follows.

\begin{theorem}[\cite{Sim94}]
Let $\st{M}^{(s)s,h}(X,\sh{A})$ be the moduli stack of (semi)stable $\sh{A}$-modules with Hilbert polynomial $h$. Then
\begin{enumerate}[(a)]
    \item The moduli stack $\st{M}^{(s)s,h}(X,\sh{A})$ is an Artin stack of finite type, and admits a good moduli space $M^{(s)s,h}(X,\sh{A})$.
    \item The coarse moduli scheme $M^{ss,h}(X,\sh{A})$ is projective, whose points are in one-to-one correspondence with S-equivalent classes of semistable $\sh{A}$-modules.
    \item The morphism $\st{M}^{s,h}(X,\sh{A})\to M^{s,h}(X,\sh{A})$ is a $\mathbb{C}^*$-gerbe, and $M^{s,h}(X,\sh{A})$ is the open subscheme of $M^{ss,h}(X,\sh{A})$ whose points corresponds to isomorphism classes of stable $\sh{A}$-modules.
\end{enumerate}
\end{theorem}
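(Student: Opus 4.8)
The plan is to follow Simpson's construction in \cite{Sim94}: realize the coarse space $M^{(s)s,h}(X,\sh{A})$ as a GIT quotient of a suitable Quot scheme of $\sh{A}$-module quotients. Since $\sh{A}$ is a coherent, locally free sheaf of $\sh{O}_X$-algebras, the general machinery of \cite{Sim94} (moduli of coherent sheaves of modules over a coherent sheaf of rings) applies; below I indicate the main steps and where the real work lies.

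\textbf{Boundedness and the parameter space.} First I would prove that the family of semistable $\sh{A}$-modules with Hilbert polynomial $h$ is bounded. As an $\sh{O}_X$-module such an $\sh{F}$ is pure with numerical invariants fixed by $h$, and its Harder--Narasimhan factors over $\sh{O}_X$ are controlled by the finitely many possible quotients of $\sh{F}$ by $\sh{A}$-submodules generated in bounded degree, so Grothendieck's boundedness lemma (in Simpson's form) applies; here one uses that $\sh{A}$ is coherent and locally free so that $\sh{A}\otimes(-)$ behaves well. Boundedness yields $m\gg 0$ with every semistable $\sh{F}$, $p_X(\sh{F})=h$, being $m$-regular: $\sh{F}(m)$ is globally generated, $H^{>0}(X,\sh{F}(m))=0$, and $\dim H^0(X,\sh{F}(m))=h(m)=:P$. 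Fixing a $P$-dimensional vector space $V$, global generation gives an $\sh{A}$-linear surjection $\sh{A}(-m)\otimes_{\mathbb{C}}V\twoheadrightarrow\sh{F}$, so every such $\sh{F}$ is a point of the Quot scheme $\mathcal{Q}$ of $\sh{A}$-module quotients of $\sh{A}(-m)\otimes V$ with Hilbert polynomial $h$. As in Proposition~\ref{prop:hilb-A}, $\mathcal{Q}$ is the closed subscheme of $\Quot_X(\sh{A}(-m)\otimes V,h)$ cut out by $\sh{A}$-linearity of the universal quotient, hence projective. Inside $\mathcal{Q}$ the locus $R$ (resp. $R^s$) where the quotient induces an isomorphism $V\xrightarrow{\ \sim\ }H^0(\sh{F}(m))$ and $\sh{F}$ is semistable (resp. stable) is open, using openness of $m$-regularity and of (semi)stability in flat families together with boundedness.

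\textbf{The GIT quotient.} The group $\GL(V)$ acts on $\mathcal{Q}$, preserving $R$ and $R^s$, two points of $R$ give isomorphic $\sh{A}$-modules exactly when they share an orbit, and the scalars $\mathbb{C}^*\subset\GL(V)$ act trivially. For $l\gg m$ one embeds $\mathcal{Q}$ equivariantly into $\mathrm{Grass}\big(V\otimes H^0(X,\sh{A}(l-m)),\,h(l)\big)$ via $\sh{F}\mapsto H^0(\sh{F}(l))$ and pulls back the Pl\"ucker bundle to get a $\GL(V)$-linearization. The crucial step --- and the main obstacle --- is the comparison lemma: for $l\gg 0$, a point of $\mathcal{Q}$ with $V\xrightarrow{\ \sim\ }H^0(\sh{F}(m))$ is GIT-(semi)stable for this linearization if and only if $\sh{F}$ is a (semi)stable $\sh{A}$-module. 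This I would prove via the Hilbert--Mumford criterion: a one-parameter subgroup of $\GL(V)$ gives a weighted filtration of $V$, which (because the quotient is $\sh{A}$-linear and $m$ is large) induces a weighted filtration of $\sh{F}$ by $\sh{A}$-\emph{submodules} --- each step being the image of an $\sh{A}$-linear map --- and one matches the Mumford weight with the leading terms of the reduced Hilbert polynomials of the filtration quotients. The required numerical estimates are exactly Simpson's, and they only involve $\sh{O}_X$-module Hilbert polynomials, so the $\sh{A}$-structure enters only through the fact that the filtrations are by $\sh{A}$-submodules. Granting the lemma, Mumford's theory produces a projective quotient $R\sslash\GL(V)$ with $R^s/\GL(V)$ open and the stable orbits being exactly the closed orbits in $R$.

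\textbf{Assembling the statement.} I would then set $\st{M}^{(s)s,h}(X,\sh{A})=[R^{(s)}/\GL(V)]$; these are Artin stacks of finite type since $R$ is a quasi-projective scheme with linearized $\GL(V)$-action, and the GIT quotient $M^{ss,h}=R\sslash\GL(V)$ is a good moduli space in the sense of Alper, projective by the previous step --- this gives (a) and the projectivity in (b). Closed points of $M^{ss,h}$ correspond to closed orbits in $R$; degenerating a semistable module to the associated graded of a Jordan--H\"older filtration by a one-parameter subgroup shows these are the polystable $\sh{A}$-modules, so $M^{ss,h}$ parametrizes S-equivalence classes, giving (b). On $R^s$ every orbit is closed with stabilizer exactly the scalars $\mathbb{C}^*$ (using $\Hom_{\sh{A}}(\sh{F},\sh{F})=\mathbb{C}$ for stable $\sh{F}$), so $\mathrm{PGL}(V)$ acts freely, $M^{s,h}=R^s/\mathrm{PGL}(V)$ is the open subscheme of $M^{ss,h}$ representing isomorphism classes of stable modules, and $\st{M}^{s,h}(X,\sh{A})\to M^{s,h}(X,\sh{A})$ is a $\mathbb{C}^*$-gerbe, giving (c). Separatedness and properness of $M^{ss,h}$ are part of its projectivity, but can also be checked directly by the valuative criterion using a Langton-type semistable reduction for $\sh{A}$-modules, whose proof carries over since it manipulates only $\sh{A}$-submodules and $\sh{O}_X$-module Hilbert polynomials. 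The heart of the argument is the GIT comparison lemma in the second step.
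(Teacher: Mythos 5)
Your proposal is correct and follows exactly the route the paper itself indicates: the paper gives no proof, simply deferring to Simpson's GIT construction ("via GIT quotient on certain Hilbert (Quot) schemes. We omit the details"), and your sketch is a faithful reconstruction of that construction — boundedness, the Quot-scheme parameter space cut out by $\sh{A}$-linearity, the Hilbert--Mumford comparison lemma, and the identification of closed orbits with polystable modules. No discrepancy with the paper's (cited) argument.
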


We will often simply write $\Hilb^h(\sh{A})$ and $\st{M}^{ss,h}(\sh{A})$ if the base space $X$ is clear.

\section{An obstruction theory for Simpson moduli space}\label{sec:proof_of_atiyah}

For the rest of this paper, our aim is to define DT invariants on a CY3 pair $(X,\sh{A})$. We begin with a study of deformation-obstruction theory of $\sh{A}$-modules and construct an obstruction theory for the moduli space of stable $\sh{A}$-modules. In Section~\ref{sec:define_DT}, we will use it to construct a symmetric obstruction theory and define DT invariants using the Hilbert schemes of $(X,\sh{A})$.

\vspace{1mm}

Let $X$ be a smooth projective variety and $\sh{A}$ a locally free sheaf of non-commutative $\sh{O}_X$-algebras. For a scheme $S$, and a coherent $\sh{A}_{S}$-module $\sh{F}$ on $X\times S$, flat over $S$. Suppose $S\subset\overline{S}$ is a square-zero extension with ideal sheaf $I$.

\begin{definition}
An $\sh{A}$-module extension of $\sh{F}$ over $\overline{S}$ is a coherent $\sh{A}_{\overline{S}}$-module $\overline{\sh{F}}$ on $X\times\overline{S}$, flat over $\overline{S}$, such that $\overline{\sh{F}}|_{X\times S}\cong\sh{F}$ as $\sh{A}_S$-modules.
\end{definition}

It is a general fact (\cite{Low05}) that existence of such extensions must be governed by an obstruction class in $\Ext^2_{\sh{A}_S}(\sh{F},\sh{F}\otimes\pi_S^*I)$. However, to obtain an obstruction theory on the moduli spaces, it requires a more explicit description of the obstruction class. We generalize the result in \cite{HT10}, showing that the obstruction class is the product of Atiyah and Kodiara--Spencer classes.

\begin{theorem}\label{thm:at-cl}
There exists a natural class
\[
\at_{\sh{A}}(\sh{F})\in\Ext^1_{\sh{A}_S}(\sh{F},\sh{F}\otimes\pi^*_S\cpx{L}_S),
\]
called the Atiyah class, such that for any square-zero extension $S\subset\overline{S}$ with ideal sheaf $I$, an $\sh{A}$-module extension of $\sh{F}$ over $\overline{S}$ exists if and only if the obstruction class
\[
\ob=\Big(\sh{F}\xrightarrow{\at_{\sh{A}}(\sh{F})}\sh{F}\otimes\pi_S^*\cpx{L}_S[1]\xrightarrow{\id_{\sh{F}}\otimes\pi_S^*\kappa(S/\overline{S})[1]}\sh{F}\otimes \pi_S^* I[2]\Big)\in\Ext^2_{\sh{A}_S}(\sh{F},\sh{F}\otimes\pi_S^* I)
\]
vanishes, where $\kappa(S/\overline{S})\in\Ext^1_S(S,I)$ is the Kodiara--Spencer class for the extension $S\subset\overline{S}$. Moreover, if an extension of $\sh{F}$ over $\overline{S}$ exists, then all (equivalence classes of) extensions form an affine space over $\Ext^1_{\sh{A}}(\sh{F},\sh{F}\otimes\pi_S^*I)$.
\end{theorem}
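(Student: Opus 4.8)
The plan is to construct the Atiyah class $\at_{\sh{A}}(\sh{F})$ by transporting the classical construction of the Atiyah class (via the universal first-order extension / jet sequence) to the ringed space $(X\times S,\sh{A}_S)$, and then to identify the obstruction to deforming $\sh{F}$ over $\overline{S}$ with the Yoneda product of this class against the Kodaira--Spencer class $\kappa(S/\overline{S})$. First I would set up the \emph{$\sh{A}$-module principal parts sheaf}: over $X\times S$, starting from the jet sequence $0\to\Omega_{X\times S/X}\to\mathcal{P}^1_{X\times S/X}\to\sh{O}_{X\times S}\to 0$ (equivalently, the cotangent complex $\cpx{L}_S$ pulled back along $\pi_S$), tensor with $\sh{F}$ to obtain a short exact sequence of \emph{$\sh{A}_S$-modules}
\[
0\to\sh{F}\otimes\pi_S^*\cpx{L}_S\to \mathcal{P}(\sh{F})\to\sh{F}\to 0,
\]
whose class in $\Ext^1_{\sh{A}_S}(\sh{F},\sh{F}\otimes\pi_S^*\cpx{L}_S)$ is by definition $\at_{\sh{A}}(\sh{F})$. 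The point that needs checking here (and the reason the sheaf $\sh{A}$ enters nontrivially) is that the natural $\sh{A}_S$-module structure on $\mathcal{P}(\sh{F})$ is well-defined: I would verify that the Leibniz-type failure of $\sh{A}_S$-linearity of the jet map lands in the submodule $\sh{F}\otimes\pi_S^*\cpx{L}_S$, so that the extension lives in the $\sh{A}_S$-linear $\Ext^1$ rather than only the $\sh{O}$-linear one; since $\sh{A}$ is a sheaf of $\sh{O}_X$-algebras and deformations are along directions transverse to $X$, this works exactly as in the commutative case treated in \cite{HT10}.

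**Next** I would carry out the obstruction computation. Given the square-zero extension $S\subset\overline{S}$ with ideal $I$, the deformations of $X\times S\hookrightarrow X\times\overline{S}$ as ringed spaces are classified by $\pi_S^*\kappa(S/\overline{S})\in\Ext^1_{X\times S}(\pi_S^*\cpx{L}_S,\pi_S^*I)$; composing with $\at_{\sh{A}}(\sh{F})$ gives a class in $\Ext^2_{\sh{A}_S}(\sh{F},\sh{F}\otimes\pi_S^*I)$, which is the stated $\ob$. To see this is \emph{the} obstruction, I would argue that an $\sh{A}_{\overline{S}}$-module flat lift $\overline{\sh{F}}$ is precisely a splitting, compatible with the $\sh{A}$-action, of a certain pushout of the jet sequence along $\pi_S^*\kappa(S/\overline{S})$ — this is the standard ``extensions of a module along a square-zero extension of the base are a torsor under $\Ext^1$, obstructed in $\Ext^2$'' argument (e.g.\ the cotangent-complex formalism of Illusie, or \cite{Low05} for the abstract abelian-category version cited above), done entirely internally to $\Coh(\sh{A}_S)$. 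Because $\sh{A}_S$ is locally free over $\sh{O}_{X\times S}$ (our standing hypothesis), flatness of $\overline{\sh{F}}$ over $\overline{S}$ is equivalent to flatness of the underlying $\sh{O}$-module, so the $\sh{A}$-linear lifting problem and the underlying $\sh{O}$-linear lifting problem have the same flatness constraint; this is what lets me run the classical argument verbatim with $\sh{A}$-linear Ext groups.

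**For the last sentence** — that when a lift exists the set of (equivalence classes of) lifts is an affine space (torsor) over $\Ext^1_{\sh{A}_S}(\sh{F},\sh{F}\otimes\pi_S^*I)$ — I would fix one lift $\overline{\sh{F}}_0$ as basepoint and show that the difference of any two lifts defines an $\sh{A}_S$-linear automorphism of the trivial square-zero situation, i.e.\ an element of $\Hom_{\sh{A}_S}(\sh{F},\sh{F}\otimes\pi_S^*I)=\Ext^0$, while the lifts themselves differ by a class in $\Ext^1$: concretely, two lifts differ by a derivation-type cocycle valued in $\SHom_{\sh{A}_S}(\sh{F},\sh{F}\otimes\pi_S^*I)$, and modding out by coboundaries (isomorphisms of extensions fixing $\sh{F}$) gives the $\Ext^1$-torsor structure. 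This is again formal once the $\sh{A}$-module principal parts construction is in place.

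**The main obstacle** I expect is the first step: constructing $\mathcal{P}(\sh{F})$ with a genuine $\sh{A}_S$-module structure and checking its functoriality (naturality of $\at_{\sh{A}}(\sh{F})$ in $\sh{F}$ and base-change compatibility in $S$), since $\sh{A}$ is noncommutative and one must be careful that left-module structures, the $\SHom_{\sh{A}}$/$\otimes_{\sh{A}}$ adjunctions recalled in Section~\ref{sec:nc-sheaf-alg}, and the jet sequence all interact correctly — in particular that the relevant connecting maps are computed with $\sh{A}_S$-linear, not merely $\sh{O}_{X\times S}$-linear, homological algebra. Once the Atiyah class is correctly defined as an element of $\Ext^1_{\sh{A}_S}$, the remaining deformation-theoretic statements are a routine transcription of \cite{HT10} and the general machinery of \cite{Low05}, \cite{BF97} into the category $\Coh(\sh{A})$.
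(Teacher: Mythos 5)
Your overall strategy is viable, and you have isolated the correct reason the noncommutativity of $\sh{A}$ is harmless: the universal structures controlling deformations over $S$ live entirely on the $S$-side and are $\sh{O}$-linear, so the $\sh{A}$-action on $\sh{F}$ rides along. This is exactly the observation the paper exploits, though in a different technical guise: rather than a principal-parts sheaf, the paper works with Fourier--Mukai kernels on $S\times S$ (the triangle $\grsh{Q}\to\grsh{H}\to\Delta_*\sh{O}_S$, the truncated universal Atiyah class $\alpha_S$, and the universal obstruction class $\omega_0=\pi_0[1]\circ\delta_0$ from \cite{HT10}), and defines $\at_{\sh{A}}(\sh{F})$ as $(\pi_{S\times S}^*\alpha_S)(\sh{F})$; your $\mathcal{P}(\sh{F})$ is the value on $\sh{F}$ of one of these kernel transforms, so the two constructions of the class itself are essentially interchangeable. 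One caveat on your first step: the naive jet sequence only produces a class valued in $\sh{F}\otimes\pi_S^*\Omega_S$, whereas composing with $\kappa(S/\overline{S})\in\Ext^1_S(\cpx{L}_S,I)$ requires a class valued in the truncated cotangent complex; for singular $S$ your parenthetical ``equivalently'' conceals the truncated universal Atiyah class construction of \cite{HT10}, which is not optional.

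The genuine gap is in your second step. You invoke \cite{Low05} (or Illusie) for ``lifts are obstructed in $\Ext^2$'' and then assert that a flat $\sh{A}_{\overline{S}}$-lift is ``precisely a splitting of a pushout of the jet sequence along $\kappa$.'' But \cite{Low05} only produces \emph{some} obstruction class in $\Ext^2_{\sh{A}_S}(\sh{F},\sh{F}\otimes\pi_S^*I)$, and the Atiyah--Kodaira--Spencer factorization of that class is proved in \cite{HT10} only for the sheaf-theoretic (commutative) setting; neither reference identifies the obstruction to $\sh{A}$-linear flat lifting with $\at_{\sh{A}}(\sh{F})\cdot\kappa(S/\overline{S})$, which is the actual content of the theorem. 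As written, your sentence restates the desired conclusion rather than proving it. The paper's Section 5 is devoted to exactly this point: it characterizes which extension classes $e\in\Ext^1_{\sh{A}_{\overline{S}}}(Ri_*\sh{F},Ri_*(\sh{F}\otimes\pi_S^*I))$ arise from flat lifts (the conditions that $\Phi_e$ be an isomorphism and $\Psi_e=\pi_{\sh{F}}$), reduces to the affine case, and then matches the kernel-theoretic class $\delta(\pi_{\sh{F}})$ against the classical Laudal cocycle obstruction $\{\ob_{\bullet}\}$ by an explicit computation with free resolutions of the $A_R$-module $M$. Some such comparison is unavoidable and is missing from your proposal; the torsor statement in your last paragraph has the same status, since in the paper it too comes out of the affine computation (modifying the lifted differentials $d'_{\bullet}$ by a $1$-cocycle) rather than from the principal-parts construction alone.
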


We follow closely the method in \cite{HT10}. The key idea is that the obstruction classes are given universally by a morphism of Fourier--Mukai transforms. The proof will be given in the next section. We set up some notations for this chapter.

For any morphism $f:S\to T$ of schemes, we will abuse the notation and write $f$ also for the induced morphism $\id_X\times f:X\times S\to X\times T$. So there are natural functors
\[
f^*:\Coh(\sh{A}_T)\to\Coh(\sh{A}_S),\quad f_*:\Coh(\sh{A}_S)\to\Coh(\sh{A}_T),
\]
where the latter one is induced by the natural morphism $\sh{A}_T=\sh{A}\boxtimes\sh{O}_T\to f_*\sh{A}_S=\sh{A}\boxtimes f_*\sh{O}_S$. We denote by $\der^{(b)}(\sh{A}_S)$ the (bounded) derived category of $\Coh(\sh{A}_S)$. In this section, tensor products $\otimes$ will always be derived tensor products over $\sh{O}$ unless stated otherwise.

We briefly recall the definition of Fourier--Mukai transforms. For any schemes $S$ and $T$, and a complex $\grsh{P}\in\der(\sh{O}_{S\times T})$ of coherent $\sh{O}_{S\times T}$-modules, we may define the Fourier--Mukai functor
\[
\Phi_{\grsh{P}}:\der(\sh{A}_S)\xrightarrow{\pi_S^*}\der(\sh{A}_{S\times T})\xrightarrow{-\otimes\grsh{P}}\der(\sh{A}_{S\times T})\xrightarrow{R\pi_{T*}}\der(\sh{A}_T),
\]
where $\grsh{P}$ is called the Fourier--Mukai kernel. Given any morphism $\phi:\grsh{P}_1\to\grsh{P}_2$ in $\der(\sh{O}_{S\times T})$, it induces a natural transformation $\Phi_{\grsh{P}_1}\to\Phi_{\grsh{P}_2}$ between functors. For any object $\sh{F}\in\der(\sh{A}_S)$, we write
\[
\phi(\sh{F}):\Phi_{\grsh{P}_1}(\sh{F})\to\Phi_{\grsh{P}_2}(\sh{F})
\]
for the induced morphism in $\der(\sh{A}_T)$.

\vspace{2mm}

Now, we use the Atiyah class to construct an obstruction theory on the moduli space. Let $S$ be any scheme, and $\sh{F}$ be a coherent $\sh{A}_S$-module, flat over $S$. Let $\alpha$ be a class in $\Ext^1_{\sh{A}_S}(\sh{F},\sh{F}\otimes\pi_S^*\cpx{L}_S)$. Then $\alpha$ defines a morphism
\[
\alpha:\sh{F}\to\sh{F}\otimes\pi_S^*\cpx{L}_S[1]\quad\text{ in }\der(\sh{A}_S).
\]
Since $\sh{F}$ is flat over $S$, $\sh{F}$ is perfect. The natural map $\sh{F}\to\sh{F}^{\vee\vee}$ is an isomorphism in $\der(\sh{A}_S)$. Thus there is an isomorphism
\[
\sh{F}\otimes\pi_S^*\cpx{L}_S\cong R\SHom_{\sh{O}_X}(\sh{F}^{\vee},\pi_S^*\cpx{L}_S)\quad\text{ in }\der(\sh{A}_S)
\]
By adjunction, $\alpha$ defines a morphism
\[
\sh{F}^{\vee}\otimes_{\sh{A}_S}\sh{F}\to\pi_S^*\cpx{L}_S[1]\quad\text{ in }\der(\sh{O}_{X\times S}).
\]
We then apply Verdier duality, it yields a morphism
\[
R\pi_{S*}\Big((\sh{F}^{\vee}\otimes_{\sh{A}_S}\sh{F})\otimes\pi_X^*\omega_X \Big)[n-1]\to\cpx{L}_S\quad\text{ in }\der(\sh{O}_S).
\]

\begin{lemma}
For any perfect complexes $\sh{F}$ and $\sh{G}$ of coherent $\sh{A}_M$-modules, there is a canonical isomorphism
\[
\Big(R\pi_{M*}R\SHom_{\sh{A}_M}(\sh{F},\sh{G})\Big)^{\vee}\cong R\pi_{M*}\left(\sh{G}^{\vee}\otimes_{\sh{A}_M}\sh{F}\otimes\pi_X^*\omega_X\right)[n].
\]
\end{lemma}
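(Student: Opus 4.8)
The plan is to establish the duality by combining Verdier (Grothendieck--Serre) duality on $X\times M$ over $M$ with the adjunction/duality properties of the functors $\pi^*\dashv\pi_*\dashv\pi^!$ relating $\sh{A}_M$-modules to $\sh{O}_{X\times M}$-modules. First I would reduce the $\SHom$ over $\sh{A}_M$ to a $\SHom$ over $\sh{O}_{X\times M}$ by using the identity $R\SHom_{\sh{A}_M}(\sh{F},\sh{G})\cong R\SHom_{\sh{O}_{X\times M}}(\sh{G}^{\vee}\otimes_{\sh{A}_M}\sh{F},\sh{O}_{X\times M})$, which is exactly the adjunction $\Hom_{\sh{O}_X}(\sh{G}\otimes_{\sh{A}}\sh{F},\sh{H})\cong\Hom_{\sh{A}}(\sh{F},\SHom_{\sh{O}_X}(\sh{G},\sh{H}))$ recalled at the start of Section~\ref{sec:nc-sheaf-alg}, derived and applied with $\sh{G}$ replaced by the $\sh{A}_M^{\op}$-module $\sh{G}^{\vee}$. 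Here $\sh{G}^{\vee}:=R\SHom_{\sh{O}_{X\times M}}(\sh{G},\sh{O}_{X\times M})$ is a perfect complex of $\sh{A}_M^{\op}$-modules since $\sh{G}$ is a perfect complex of $\sh{A}_M$-modules and $\sh{A}_M$ is locally free over $\sh{O}_{X\times M}$. So $R\SHom_{\sh{A}_M}(\sh{F},\sh{G})\cong(\sh{G}^{\vee}\otimes_{\sh{A}_M}\sh{F})^{\vee}$ in $\der(\sh{O}_{X\times M})$.

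Next I would apply relative Verdier duality for $\pi_M\colon X\times M\to M$. Since $X$ is smooth projective of dimension $n$, the relative dualizing complex of $\pi_M$ is $\pi_X^*\omega_X[n]$, and Grothendieck duality gives, for any perfect $\cpx{P}\in\der(\sh{O}_{X\times M})$,
\[
\big(R\pi_{M*}\cpx{P}\big)^{\vee}\cong R\pi_{M*}\big(\cpx{P}^{\vee}\otimes\pi_X^*\omega_X\big)[n]
\]
in $\der(\sh{O}_M)$ (this is the statement used implicitly in the Verdier-duality step just before the lemma). Taking $\cpx{P}=R\SHom_{\sh{A}_M}(\sh{F},\sh{G})\cong(\sh{G}^{\vee}\otimes_{\sh{A}_M}\sh{F})^{\vee}$, whose dual is $\sh{G}^{\vee}\otimes_{\sh{A}_M}\sh{F}$ (using perfectness so that double-dual is the identity), yields directly
\[
\big(R\pi_{M*}R\SHom_{\sh{A}_M}(\sh{F},\sh{G})\big)^{\vee}\cong R\pi_{M*}\big((\sh{G}^{\vee}\otimes_{\sh{A}_M}\sh{F})\otimes\pi_X^*\omega_X\big)[n],
\]
which is the claimed isomorphism.

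The main point requiring care — and the step I expect to be the chief obstacle — is checking that all the objects involved are genuinely perfect (finite Tor-dimension) over the appropriate sheaves of rings, so that both Verdier duality and the identity $\cpx{P}^{\vee\vee}\cong\cpx{P}$ apply, and that the identification $R\SHom_{\sh{A}_M}(\sh{F},\sh{G})\cong(\sh{G}^{\vee}\otimes_{\sh{A}_M}\sh{F})^{\vee}$ is compatible on the level of complexes (not just cohomology) so that one can legitimately dualize it. For perfectness: $\sh{F},\sh{G}$ are perfect over $\sh{A}_M$ by hypothesis, hence (as $\sh{A}_M$ is locally free over $\sh{O}_{X\times M}$, so any locally projective $\sh{A}_M$-module is locally free over $\sh{O}_{X\times M}$) they are perfect over $\sh{O}_{X\times M}$; then $\sh{G}^{\vee}\otimes_{\sh{A}_M}\sh{F}$ is locally computed by a finite complex of locally free $\sh{O}_{X\times M}$-modules, hence perfect, and $R\SHom_{\sh{O}_{X\times M}}$ of a perfect complex with $\sh{O}_{X\times M}$ is again perfect. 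For the $\SHom$ identification I would verify it locally on affine opens of $X\times M$ of the form $\Spec(R)$ with $\sh{A}_M$ corresponding to an $R$-algebra $\Lambda$ finite and free over $R$, reducing to the standard module-theoretic fact $R\Hom_{\Lambda}(F,G)\cong R\Hom_R(G^{\vee}\otimes^{L}_{\Lambda}F,R)$ for perfect $\Lambda$-complexes, and then globalize by the usual sheafification argument; the naturality (functoriality in $\sh{F}$ and $\sh{G}$) is automatic from the adjunction, which is what makes the isomorphism "canonical."
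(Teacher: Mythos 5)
Your argument is correct and is essentially the paper's own proof: the paper establishes the lemma by exactly the same two steps (the adjunction identifying $R\SHom_{\sh{A}_M}(\sh{F},\sh{G})$ with $\bigl(\sh{G}^{\vee}\otimes_{\sh{A}_M}\sh{F}\bigr)^{\vee}$ in $\der(\sh{O}_{X\times M})$, followed by relative Verdier duality for $\pi_M$), simply citing the identical computation carried out just above for $\pi_S^*\cpx{L}_S$ with that complex replaced by $\sh{O}_{X\times S}$. Your extra care about perfectness over $\sh{O}_{X\times M}$, via the local freeness of $\sh{A}$, is precisely the hypothesis the paper uses implicitly.
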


\begin{proof}
Use the same argument as above, with $\pi_S^*\cpx{L}_S$ replaced by $\pi_S^*\sh{O}_S=\sh{O}_{X\times S}$.
\end{proof}

We conclude that any class $\alpha\in\Ext^1_{\sh{A}_S}(\sh{F},\sh{F}\otimes\pi_S^*\cpx{L}_S)$ defines a morphism
\begin{equation}\label{eqn:pre-ob-theory}
    \Big(R\pi_{S*}R\SHom_{\sh{A}_S}(\sh{F},\sh{F})\Big)^{\vee}[-1]\to\cpx{L}_S.
\end{equation}
It is almost by definition to see that if $S=M$ is a \emph{fine} moduli space and $\sh{F}$ is the universal family of coherent $\sh{A}$-modules, then the morphism (\ref{eqn:pre-ob-theory}) induced by a class $\alpha$ is an obstruction theory for $M$ if and only if $\alpha$ is the Atiyah class. 

In general, the moduli space $M:=M^{s,p}_X(\sh{A})$ is not a fine moduli space because the $\mathbb{C}^*$-gerbe $\st{M}\to M$ is not trivial so the universal family on $\st{M}$ does not descend to $M$. We use the fact that any $\mathbb{C}^*$-gerbe is \'{e}tale locally trivial, that is, there is an \'{e}tale cover $U\to X$ with a section
\[
\xymatrix{
 & \st{M}\ar[d] \\
U\ar[r]\ar[ru] & M.
}
\]
We denote by $\mathcal{F}$ the pullback of the universal family of stable $\sh{A}$-modules to $X\times U$.

Consider the natural transformation
\[
\Phi:\Hom_{\Sch}(-,U)\to\sh{M},
\]
where $\sh{M}$ is the moduli functor for stable $\sh{A}$-modules on $X$, and $\Phi$ sends any morphism $f:S\to U$ to the $\sh{A}_T$-module $(\id_X\times f)^*\sh{F}$ on $X\times S$.

\begin{lemma}\label{lem:locally-fine}
The natural transformation $\Phi$ satisfies
\begin{enumerate}
    \item $\Phi(\Spec\mathbb{C})$ is surjective;
    \item For any $f:S\to U$ and any square-zero extension $S\subset\overline{S}$, the map $\Phi(\overline{S})$ induces a bijection between subsets
    \[
    \Big\{\overline{f}:\overline{S}\to U: \overline{f}|_S=f \Big\}\subset \Hom_{\Sch}(\overline{S},U)
    \]
    and
    \[
    \Big\{\text{$\sh{A}$-module extensions of $\sh{F}$ over $\overline{S}$}\Big\}\subset\sh{M}(\overline{S}).
    \]
\end{enumerate}
In other words, $U$ has the same deformation-obstruction theory as a fine moduli space.
\end{lemma}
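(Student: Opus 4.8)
The plan is to deduce both assertions from the interplay between the moduli stack $\st{M}:=\st{M}^{s,h}(X,\sh{A})$, its good moduli space $M$, and the section $s\colon U\to\st{M}$ of the $\mathbb{C}^*$-gerbe $\pi\colon\st{M}\to M$ fixed above. Recall that $\st{M}$ is a fine moduli stack: for every scheme $T$ the groupoid $\Hom_{\Sch}(T,\st{M})$ is canonically equivalent to the groupoid of flat families of stable $\sh{A}$-modules on $X\times T$, and $\sh{M}(T)$ is its set of isomorphism classes; write $\sh{F}^{\mathrm{univ}}$ for the universal family on $X\times\st{M}$. Since $\sh{F}=(\id_X\times s)^*\sh{F}^{\mathrm{univ}}$ on $X\times U$, we have $\Phi(f)=(s\circ f)^*\sh{F}^{\mathrm{univ}}$ for $f\colon S\to U$ (abusing notation as in the text). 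Two facts will be used repeatedly. First, $s$ is smooth and surjective: $\pi$ is smooth and surjective, the base change $\st{M}_U:=\st{M}\times_M U\to U$ is a $\mathbb{C}^*$-gerbe admitting the section induced by $s$, hence $\st{M}_U\cong U\times B\mathbb{C}^*$, and $s$ factors as $U=U\times\Spec\mathbb{C}\to U\times B\mathbb{C}^*=\st{M}_U\to\st{M}$, a composite of a smooth surjection with the étale surjection $\st{M}_U\to\st{M}$ (the base change of $U\to M$). Second, $U\to M$ is étale, so formally unramified: a morphism $\overline{S}\to M$ admits at most one lift to $U$ extending a prescribed $S\to U$.

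For (1): a stable $\sh{A}$-module $F_0$ gives a closed point $[F_0]\in M$, and since $U\to M$ is surjective we may choose $u\in U$ over $[F_0]$. The $\mathbb{C}$-points of $\st{M}$ lying over $[F_0]$ are precisely the stable $\sh{A}$-modules in the isomorphism class $[F_0]$, so $s(u)$ corresponds to a module isomorphic to $F_0$; thus $\Phi(u)\cong F_0$, which proves surjectivity of $\Phi(\Spec\mathbb{C})$.

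For (2): fix $f\colon S\to U$ and put $\sh{F}_f:=\Phi(f)$. If $\overline{f}$ extends $f$, then $\Phi(\overline{f})=\overline{f}^*\sh{F}$ is flat over $\overline{S}$ (as $\sh{F}$ is flat over $U$, being pulled back from $\sh{F}^{\mathrm{univ}}$, which is flat over $\st{M}$) and restricts to $\sh{F}_f$ over $S$, so $\Phi(\overline{S})$ does take values in the set of $\sh{A}$-module extensions of $\sh{F}_f$. \emph{Injectivity:} if $\overline{f}_0,\overline{f}_1$ extend $f$ and $\Phi(\overline{f}_0)\cong\Phi(\overline{f}_1)$, then $s\circ\overline{f}_0$ and $s\circ\overline{f}_1$, which classify these families, are $2$-isomorphic, and composing with $\pi$ yields the equality $(U\to M)\circ\overline{f}_0=(U\to M)\circ\overline{f}_1$ in the scheme $M$; formal unramifiedness of $U\to M$ and $\overline{f}_0|_S=\overline{f}_1|_S=f$ then force $\overline{f}_0=\overline{f}_1$. \emph{Surjectivity:} let $\overline{\sh{F}}$ be an extension of $\sh{F}_f$ over $\overline{S}$ and regard it as a morphism $g\colon\overline{S}\to\st{M}$; the isomorphism $\overline{\sh{F}}|_{X\times S}\cong\sh{F}_f$ provides a $2$-isomorphism $\alpha\colon g|_S\Rightarrow s\circ f$. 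Form $P:=\overline{S}\times_{\st{M}}U$; its projection $P\to\overline{S}$ is the base change of the smooth surjection $s$, hence smooth, and the triple $(S\hookrightarrow\overline{S},\,f,\,\alpha)$ defines a morphism $S\to P$ lying over the inclusion $S\hookrightarrow\overline{S}$. By the infinitesimal lifting property of the smooth morphism $P\to\overline{S}$ along the square-zero immersion $S\subset\overline{S}$, this extends to a morphism $\sigma\colon\overline{S}\to P$ over $\mathrm{id}_{\overline{S}}$; let $\overline{f}$ be the composite $\overline{S}\xrightarrow{\sigma}P\to U$. Then $\overline{f}|_S=f$, and $\sigma$ carries a $2$-isomorphism $s\circ\overline{f}\Rightarrow g$ restricting to $\alpha$ over $S$, so $\Phi(\overline{f})=(s\circ\overline{f})^*\sh{F}^{\mathrm{univ}}\cong g^*\sh{F}^{\mathrm{univ}}=\overline{\sh{F}}$ compatibly with the restrictions to $X\times S$. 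Hence $\Phi(\overline{S})$ is the claimed bijection.

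The main obstacle is the surjectivity in (2). Because $\pi$ is a nontrivial gerbe, the extension $\overline{\sh{F}}$ does not carry a morphism $\overline{S}\to M$ remembering enough to lift directly along the \emph{étale} map $U\to M$; one is forced to lift along the merely \emph{smooth} section $s\colon U\to\st{M}$, which is precisely where the stacky structure is used, via the infinitesimal lifting property for smooth morphisms of algebraic stacks. A secondary point is the exact equivalence relation on extensions (isomorphisms restricting to the identity on $X\times S$), but it is respected by every isomorphism produced above and so does not affect the count.
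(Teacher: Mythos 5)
Your write-up is the correct unwinding of the paper's one-sentence proof (which simply appeals to ``the definition of the morphism $U\to\st{M}$ being \'etale''): part (1), the well-definedness of $\Phi(\overline{S})$ on extensions, and the injectivity in (2) via formal unramifiedness of the \'etale map $U\to M$ together with the fact that $S\subset\overline{S}$ is a homeomorphism are exactly what is intended. Your observation that the section $s\colon U\to\st{M}$ is only \emph{smooth} of relative dimension $1$ (with $T_{U/\st{M}}\cong\sh{O}_U$ coming from the band $\mathbb{C}^*$), while it is $U\to M$ that is \'etale, is in fact more careful than the paper's own phrasing.

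The one step that does not go through as written is the surjectivity in (2). The infinitesimal lifting property of the smooth morphism $P=\overline{S}\times_{\st{M}}U\to\overline{S}$ produces the section $\sigma$ only when $S$ is \emph{affine}; for general $S$ the lift exists locally and the obstruction to globalizing it lies in $H^1(S,\sigma_0^*T_{P/\overline{S}}\otimes I)\cong H^1(S,I)$. This is not merely a technicality: $P$, restricted along the canonical section of the \'etale map $\overline{S}\times_M U\to\overline{S}$ extending $(\mathrm{incl},f)$, is a $\mathbb{C}^*$-torsor over $\overline{S}$ trivialized over $S$, and its class lives in $\ker\big(\mathrm{Pic}(\overline{S})\to\mathrm{Pic}(S)\big)$, which is a quotient of $H^1(S,I)$. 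Concretely, an extension $\overline{\sh{F}}$ is always of the form $\overline{f}^*\sh{F}\otimes\pi_{\overline{S}}^*L$ with $L|_S\cong\sh{O}_S$ (lift $f$ uniquely along the formally \'etale $U\to M$ to get $\overline{f}$, then compare the two families fiberwise using simplicity of stable modules), and for non-affine $S$ such an $L$ need not be trivial, so the asserted bijection onto a subset of $\sh{M}(\overline{S})$ can genuinely fail. The repair is to add the hypothesis that $S$ is affine --- which is harmless, since the Behrend--Fantechi criterion used in the subsequent theorem only requires affine test extensions, and which makes both your smooth-lifting argument and the alternative argument via $L\cong\sh{O}_{\overline{S}}$ immediate. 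Everything else in your proposal is correct.
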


\begin{proof}
This is essentially the definition of the morphism $U\to\st{M}$ being \'{e}tale.
\end{proof}

\begin{theorem}
Let $U$ and $\sh{F}$ be described as above. Then the Atiyah class $\at_{\sh{A}}(\sh{F})$ defines an obstruction theory
\begin{equation}\label{eqn:ob_theory_U}
    \cpx{E}:=\Big(R\pi_{U*}R\SHom_{\sh{A}_U}(\sh{F},\sh{F})\Big)^{\vee}[-1]\to\cpx{L}_U
\end{equation}
for $U$.
\end{theorem}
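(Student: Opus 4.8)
The plan is to assemble the obstruction theory from three ingredients that are, by this point, already in place: (i) the general principle that on a fine moduli space the Atiyah class furnishes an obstruction theory (recorded as the remark immediately following \eqref{eqn:pre-ob-theory}), (ii) Lemma~\ref{lem:locally-fine}, which says that $U$ behaves for deformation-theoretic purposes exactly like a fine moduli space, and (iii) Theorem~\ref{thm:at-cl}, which identifies the obstruction to extending $\sh{F}$ as the product of $\at_{\sh{A}}(\sh{F})$ with the Kodaira--Spencer class. So the proof is mostly a matter of checking that the formal definition of an obstruction theory (in the sense of Behrend--Fantechi \cite{BF97}) is verified by the map \eqref{eqn:ob_theory_U} produced by feeding $\alpha = \at_{\sh{A}}(\mathcal{F})$ into the construction \eqref{eqn:pre-ob-theory}.

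\emph{First} I would recall what must be shown: writing $\cpx{E}$ for the source of \eqref{eqn:ob_theory_U}, the map $\phi\colon\cpx{E}\to\cpx{L}_U$ is an obstruction theory iff $h^0(\phi)$ is an isomorphism and $h^{-1}(\phi)$ is surjective, equivalently iff for every square-zero extension $S\subset\overline S$ with ideal $I$ and every $f\colon S\to U$, (a) the composite $f^*\cpx{E}^\vee\to f^*\cpx{L}_U^\vee\to \omega_{S/\overline S}\text{-type class}$ gives the correct obstruction in $\Ext^1_S(f^*\cpx{E},I)$ whose vanishing is equivalent to the existence of $\overline f$, and (b) when $\overline f$ exists the set of such extensions is a torsor under $\Ext^0_S(f^*\cpx{E},I)$. \emph{Next}, I would use Lemma~\ref{lem:locally-fine} to replace ``existence/space of $\overline f$'' by ``existence/space of $\sh{A}$-module extensions of $\mathcal{F}$ over $\overline S$'', and then invoke Theorem~\ref{thm:at-cl}: the obstruction to the latter is $\id_{\mathcal{F}}\otimes\kappa(S/\overline S)$ composed with $\at_{\sh{A}}(\mathcal{F})$, lying in $\Ext^2_{\sh{A}_S}(\mathcal{F},\mathcal{F}\otimes\pi_S^*I)$, and the extensions form an affine space over $\Ext^1_{\sh{A}_S}(\mathcal{F},\mathcal{F}\otimes\pi_S^*I)$.

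\emph{Then} comes the one genuine computation: one must match these $\sh{A}_S$-Ext groups with the Hom/Ext groups out of $f^*\cpx{E}$ predicted by the abstract framework, compatibly with the Kodaira--Spencer/obstruction pairing. Concretely, by adjunction and the Lemma relating $\bigl(R\pi_{M*}R\SHom_{\sh{A}_M}(-,-)\bigr)^\vee$ to $R\pi_{M*}\bigl((-)^\vee\otimes_{\sh{A}_M}(-)\otimes\pi_X^*\omega_X\bigr)[n]$, together with base change along $f$ (valid since $\mathcal{F}$ is flat over $U$, hence $\mathcal{F}$ is perfect and $f^*\mathcal{F}$ computes the pullback), one gets
\[
\Ext^i_S\bigl(f^*\cpx{E},I\bigr)\;\cong\;\Ext^{i+1}_{\sh{A}_S}\bigl(f^*\mathcal{F},f^*\mathcal{F}\otimes\pi_S^*I\bigr),
\]
and under this identification the map ``$-\circ h^{-1}(\phi)^\vee\circ\kappa$'' sends $\kappa(S/\overline S)$ precisely to the class $\ob$ of Theorem~\ref{thm:at-cl}; this is exactly the content of the sentence before Lemma~\ref{lem:locally-fine} that \eqref{eqn:pre-ob-theory} induced by $\alpha$ is an obstruction theory \emph{iff} $\alpha=\at_{\sh{A}}$. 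I would spell this out by tracing through the construction of \eqref{eqn:pre-ob-theory}: adjunction turns $\alpha\colon\mathcal{F}\to\mathcal{F}\otimes\pi_S^*\cpx{L}_S[1]$ into $\mathcal{F}^\vee\otimes_{\sh{A}_S}\mathcal{F}\to\pi_S^*\cpx{L}_S[1]$, Verdier duality turns that into the desired map to $\cpx{L}_S$, and the naturality of all three operations in $S$ is what produces compatibility with $f$ and with $\kappa$.

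\emph{The hard part} will be the bookkeeping in that last paragraph — being careful that the Verdier-duality / adjunction identifications are functorial in the base scheme, that base change $Lf^*R\pi_{U*}\simeq R\pi_{S*}Lf^*$ applies (it does, by flatness of $\mathcal{F}$/$\mathcal{A}$ and properness of $\pi$), and that the pairing used to define $\theta$ later matches the one appearing here — rather than any conceptual difficulty. Everything substantive has already been proved: the existence and naturality of $\at_{\sh{A}}$ and the identification of the obstruction class (Theorem~\ref{thm:at-cl}), and the ``locally fine'' property (Lemma~\ref{lem:locally-fine}). So I expect the proof to read: apply the criterion of \cite{BF97}, reduce via Lemma~\ref{lem:locally-fine} to $\sh{A}$-module extensions, apply Theorem~\ref{thm:at-cl}, and conclude by the adjunction/duality identification of Ext-groups — which is the one place I would slow down and verify the signs and functoriality explicitly.
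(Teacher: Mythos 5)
Your proposal follows essentially the same route as the paper: the paper's proof considers the class $o=\bigl(Lf^*\cpx{E}\to Lf^*\cpx{L}_U\to\cpx{L}_S\xrightarrow{\kappa(S/\overline S)}I[1]\bigr)$, runs exactly the adjunction/Verdier-duality/base-change chain you describe to identify $\Hom_{\sh{O}_S}(Lf^*\cpx{E},-)$ with $\Ext^{1}_{\sh{A}_S}(g^*\sh{F},g^*\sh{F}\otimes-)$, uses functoriality of the Atiyah class to match $o$ with the obstruction class of Theorem~\ref{thm:at-cl}, and concludes by Lemma~\ref{lem:locally-fine}. No substantive differences.
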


\begin{proof}
Let $f:S\to U$ be a morphism, and $S\subset\overline{S}$ be a square-zero extension with ideal sheaf $I$. We consider the class
\[
o=\Big(Lf^*\cpx{E}\to Lf^*\cpx{L}_U\to\cpx{L}_S\xrightarrow{\kappa(S/\overline{S})}I[1]\Big)\in\Ext^1_{\sh{O}_S}(Lf^*\cpx{E},I).
\]
Observe that there are natural isomorphisms
\begin{align*}
    \Hom_{\sh{O}_S}(Lf^*\cpx{E},\cpx{L}_S) & \cong \Hom_{\sh{O}_U}(\cpx{E},Rf_*\cpx{L}_S) \\
    & \cong \Hom_{\sh{O}_{X\times U}}(\sh{F}^{\vee}\otimes_{\sh{A}_U}\sh{F},\pi_U^*Rf_*\cpx{L}_S[1])\\
    & \cong \Hom_{\sh{O}_{X\times U}}(\sh{F}^{\vee}\otimes_{\sh{A}_U}\sh{F},Rg_*\pi_S^*\cpx{L}_S[1])\\
    & \cong \Hom_{\sh{O}_{X\times S}}(g^*(\sh{F}^{\vee}\otimes_{\sh{A}_U}\sh{F}),\pi_S^*\cpx{L}_S[1]) \\
    & \cong \Hom_{\sh{O}_{X\times S}}(g^*\sh{F}^{\vee}\otimes_{g^*\sh{A}_U}g^*\sh{F},\pi_S^*\cpx{L}_S)\\
    & \cong \Hom_{\sh{O}_{X\times S}}((g^*\sh{F})^{\vee}\otimes_{\sh{A}_S}(g^*\sh{F}),\pi_S^*\cpx{L}_S) \\
    & \cong \Hom_{\sh{A}_S}(g^*\sh{F},g^*\sh{F}\otimes\pi_S^*\cpx{L}_S)
\end{align*}
where $g=\id_X\times f:X\times S\to X\times U$. By the functoriality of Atiyah classes, it sends the composition $(Lf^*\cpx{E}\to Lf^*\cpx{L}_U\to\cpx{L}_S)$ to the Atiyah class $\at_{\sh{A}}(g^*\sh{F})$ on $X\times S$. Therefore the class $o\in\Ext^1_{\sh{O}_S}(Lf^*\cpx{E},I)$ corresponds to the obstruction class $\ob\in\Ext^2_{\sh{A}_S}(g^*\sh{F},g^*\sh{F}\otimes\pi_S^*\cpx{L}_S)$ for the coherent $\sh{A}_S$-module $g^*\sh{F}$ on $X\times S$.

The rest of the proof follows from Lemma~\ref{lem:locally-fine}: there exists an extension of $f:S\to U$ to $\overline{S}$ if and only if there exists an $\sh{A}$-module deformation of $g^*\sh{F}$ over $\overline{S}$.
\end{proof}

The coherent $\sh{A}_U$-module $\sh{F}$ on $X\times U$ is pulled back from the $\mathbb{C}^*$-gerbe $\st{M}\to M$, so it can be regarded as a \emph{twisted} $\sh{A}_M$-module on $X\times M$.

\begin{lemma}
The complex $R\SHom_{\sh{A}_{U}}(\sh{F},\sh{F})$ on $X\times U$ descends to a complex in $\der_{\text{\'{e}t}}(\sh{O}_{X\times M})$, which we denote by $R\SHom_{\sh{A}_M}(\sh{F},\sh{F})$.
\end{lemma}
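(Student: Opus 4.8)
The plan is to use the étale-local triviality of the $\mathbb{C}^*$-gerbe $\st{M}\to M$ together with $\mathbb{C}^*$-equivariance of the complex $R\SHom_{\sh{A}_U}(\sh{F},\sh{F})$, and then invoke descent for the (derived) étale topology. First I would recall that the universal twisted family $\sh{F}$ on $X\times U$ is pulled back from $\st{M}$, and that two pullbacks of $\sh{F}$ along the two projections of the fiber product $U\times_M U$ (equivalently, along $U\times_M\st{M}\to U$) differ by tensoring with a line bundle coming from the gerbe class in $H^2_{\text{\'et}}(M,\mathbb{C}^*)$. The key point is that $R\SHom$ is insensitive to this twist: for any line bundle $\sh{L}$ on an étale neighborhood, the obvious map
\[
R\SHom_{\sh{A}}(\sh{F},\sh{F})\xrightarrow{\ \sim\ }R\SHom_{\sh{A}}(\sh{F}\otimes\sh{L},\sh{F}\otimes\sh{L})
\]
is a canonical isomorphism, since tensoring by a line bundle is an autoequivalence compatible with the $\sh{A}$-module structure. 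Hence the weight under the relative $\mathbb{C}^*$-action (which acts on twisted sheaves with weight $1$ and on $\sh{Hom}$-type complexes with weight $0$) is trivial.

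Next I would make this precise as genuine descent data. Writing $U^{(1)}=U\times_M U$ and $U^{(2)}=U\times_M U\times_M U$ with the usual face maps $p_i,p_{ij}$, the two complexes $p_0^*R\SHom_{\sh{A}_U}(\sh{F},\sh{F})$ and $p_1^*R\SHom_{\sh{A}_U}(\sh{F},\sh{F})$ on $X\times U^{(1)}$ are identified by the canonical isomorphism above, because over $U^{(1)}$ the two pullbacks of $\sh{F}$ differ by a line bundle $\sh{L}_{01}$, and the isomorphism is functorial in $\sh{L}_{01}$. I then check the cocycle condition on $X\times U^{(2)}$: this reduces to the statement that the canonical isomorphisms $R\SHom(\sh{F}\otimes\sh{L},\sh{F}\otimes\sh{L})\cong R\SHom(\sh{F},\sh{F})$ compose correctly under $\sh{L}_{02}\cong\sh{L}_{01}\otimes\sh{L}_{12}$, which is immediate from the functoriality. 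By étale descent for the bounded derived category of quasi-coherent sheaves on $X\times M$ (e.g. along the smooth surjection $U\to M$, using that $\der^b(\Coh)$ forms a stack for the étale topology, or working with the derived category of the étale site as in the statement), the complex $R\SHom_{\sh{A}_U}(\sh{F},\sh{F})$ glues to an object of $\der_{\text{\'et}}(\sh{O}_{X\times M})$, which we name $R\SHom_{\sh{A}_M}(\sh{F},\sh{F})$.

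The main obstacle I anticipate is not the formal descent bookkeeping but verifying that the comparison isomorphism is \emph{canonical and coherent} — i.e. that I have genuine descent data satisfying the cocycle condition on the nose, not merely an isomorphism after passing to cohomology sheaves. This is exactly where one must use that tensoring by $\sh{L}$ is an honest autoequivalence of $\Coh(\sh{A})$ (since $\sh{L}$ is pulled back from $X\times M$ and is central, its action commutes with the $\sh{A}$-module structure), so that the induced isomorphism on $R\SHom$ is strictly functorial in $\sh{L}$; the cocycle condition then follows from the corresponding (trivial) identity for line bundles. A secondary technical point is to record that descent for $\der^b$ is available here: since $X\times M$ is a noetherian scheme and $U\to M$ is a smooth (étale, after the gerbe trivialization) cover, one may either cite that $\der^b_{\mathrm{coh}}$ satisfies étale (even fppf) descent, or simply work throughout in $\der_{\text{\'et}}(\sh{O}_{X\times M})$ as the statement of the lemma already allows, in which case the gluing is formal once the descent data are in hand. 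Finally I would note that this descended complex is the one entering the obstruction theory $\cpx{E}$, so that $\cpx{E}$ is indeed defined on $M$ and not merely on $U$.
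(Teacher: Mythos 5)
Your key observation --- that $R\SHom(\sh{F},\sh{F})$ is insensitive to simultaneously twisting both arguments by the line bundle relating the two pullbacks of $\sh{F}$ over $U\times_M U$, so the complex has weight zero for the gerbe and should descend --- is exactly the content of the lemma, and your argument is correct. The paper packages it differently: it writes $\SHom_{\sh{A}}(\sh{G},\sh{G})$ as the equalizer of the two natural maps
\[
\SHom_{\sh{O}_X}(\sh{G},\sh{G})\rightrightarrows\SHom_{\sh{O}_X}(\sh{A}\otimes\sh{G},\sh{G}),
\]
and then simply cites C\u{a}ld\u{a}raru's result that $\SHom_{\sh{O}_X}(\sh{F},\sh{F})$ of a twisted sheaf is an honest untwisted coherent sheaf. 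This reduces everything to descent of genuine coherent sheaves, for which étale descent is classical, and thereby sidesteps precisely the two points you (rightly) flag as the delicate ones in your route: checking that the comparison isomorphisms on $U\times_M U$ satisfy the cocycle condition coherently, and justifying that objects of the derived category can be glued along an étale cover at all (the derived category is not a stack, so ``formal gluing once the descent data are in hand'' needs either a dg/enhanced model or a vanishing of negative self-Exts). Your approach is more explicit and makes the mechanism visible; the paper's is shorter and avoids the derived-descent subtlety by working at the level of sheaves and only then deriving. If you keep your version, you should either pass through an enhancement or note that the complex is computed by an explicit (equalizer/bar-type) resolution by sheaves that themselves descend, which is in effect what the paper does.
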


\begin{proof}
Note that for any coherent $\sh{A}$-module $\sh{G}$, $\SHom_{\sh{A}}(\sh{G},\sh{G})$ is the equalizer of
\[
\xymatrix{
\SHom_{\sh{O}_X}(\sh{G},\sh{G})\ar@<-.5ex>[r] \ar@<.5ex>[r] & \SHom_{\sh{O}_X}(\sh{A}\otimes\sh{G},\sh{G}).
}
\]
Then the proof follows from the fact that for any twisted sheaf $\sh{F}$, $\SHom_{\sh{O}_X}(\sh{F},\sh{F})$ is a (untwisted) coherent sheaf (see for instance \cite{Cal00}).
\end{proof}

By functoriality of the Atiyah class, the obstruction theory $\cpx{E}\to\cpx{L}_U$ also descends to a morphism in $\der_{\text{\'{e}t}}(\sh{O}_M)$
\begin{equation}\label{eqn:ob_theory}
    \Big(R\pi_{M*}R\SHom_{\sh{A}_M}(\sh{F},\sh{F})\Big)^{\vee}[-1]\to\cpx{L}_M\quad\text{ in }\der_{\text{\'{e}t}}(\sh{O}_M).
\end{equation}
Since being an obstruction theory is an \'{e}tale local property, we conclude that

\begin{corollary}\label{cor:ob_for_M}
The morphism (\ref{eqn:ob_theory}) is an obstruction theory for the moduli space $M$.
\end{corollary}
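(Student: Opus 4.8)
The plan is to reduce the statement to its local counterpart for $U$ and then invoke étale descent. First I would recall that Corollary~\ref{cor:ob_for_M} concerns the morphism (\ref{eqn:ob_theory}), which was constructed precisely as the étale-local descent of the morphism (\ref{eqn:ob_theory_U}) for $U$ established in the preceding theorem. Since $U\to M$ is an étale cover and being an obstruction theory is an étale-local condition on the target, the content of the corollary is exactly that this descent is legitimate: the complex $R\SHom_{\sh{A}_M}(\sh{F},\sh{F})$ makes sense on $X\times M$ by the lemma on descent of the endomorphism complex of a twisted sheaf, and the pushforward, dualization, and shift commute with the étale base change $U\to M$.

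Concretely, I would carry out the argument in the following order. Step one: observe that the morphism (\ref{eqn:ob_theory_U}) pulls back along any étale map to a morphism of the same shape, using the base-change isomorphism $Lf^*R\pi_{U*}\cong R\pi_{S*}Lg^*$ for flat (in particular étale) $f$, together with the flatness of $\sh{F}$ over $U$ which makes all the complexes involved perfect and base-change-friendly. Step two: note that the étale cover $U\to M$, after passing to $U\times_M U$, carries a canonical descent datum on $R\SHom_{\sh{A}_U}(\sh{F},\sh{F})$ — this is supplied by the lemma showing $R\SHom_{\sh{A}_U}(\sh{F},\sh{F})$ descends to $\der_{\text{\'et}}(\sh{O}_{X\times M})$, since the $\mathbb{C}^*$-gerbe twisting cancels in the $\SHom$. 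Step three: apply $R\pi_{M*}$, $(-)^{\vee}$, and $[-1]$ to the descended complex; these operations are compatible with étale descent, so the resulting morphism to $\cpx{L}_M$ is well-defined and its pullback to $U$ is (\ref{eqn:ob_theory_U}). Step four: invoke the fact that $\cpx{E}\to\cpx{L}$ being an obstruction theory — i.e. $h^0$ an isomorphism and $h^{-1}$ surjective, or equivalently the lifting criterion of \cite{BF97} — can be checked étale-locally on $M$; since it holds on $U$ by the previous theorem, it holds on $M$.

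The only genuinely delicate point is verifying that the obstruction theory property descends, rather than merely the morphism. For this I would use the functoriality of the Atiyah class (already invoked in the construction of (\ref{eqn:ob_theory})): for any $T\to M$ and any square-zero extension $T\subset\overline{T}$, one checks the lifting criterion after étale-localizing $T$ so that the composite $T\to M$ factors through $U$, whereupon the criterion becomes the one already verified on $U$ via Lemma~\ref{lem:locally-fine}. The compatibility of the obstruction class with étale base change is exactly the functoriality statement established in the proof of the theorem for $U$, so no new computation is needed. In short, the corollary is a formal consequence: it records that the construction done étale-locally on $M$ glues, and this gluing is governed by the descent lemma for $R\SHom_{\sh{A}_M}(\sh{F},\sh{F})$ plus the étale-local nature of the defining conditions for an obstruction theory.
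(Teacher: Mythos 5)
Your proposal is correct and follows essentially the same route as the paper: descend the complex via the lemma on $R\SHom_{\sh{A}_M}(\sh{F},\sh{F})$, descend the morphism by functoriality of the Atiyah class, and conclude because being an obstruction theory is an étale-local property already verified on $U$. The extra detail you supply on base change and the lifting criterion is consistent with, and only elaborates on, the paper's argument.
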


\section{Proof of Theorem~\ref{thm:at-cl}}

We fix a scheme $S$ and a square-zero extension $i:S\subset\overline{S}$ with ideal sheaf $I$. Let $\sh{F}$ be a coherent $\sh{A}_S$-module on $X\times S$, flat over $S$. Suppose $\overline{\sh{F}}$ is an $\sh{A}$-module extension of $\sh{F}$ over $\overline{S}$. The isomorphism $Li^*\overline{\sh{F}}=i^*\overline{\sh{F}}\to\sh{F}$ induces an exact triangle
\[
Ri_*(\sh{F}\otimes\pi_S^*I)\to\overline{\sh{F}}\to Ri_*\sh{F}\xrightarrow{e} Ri_*(\sh{F}\otimes\pi_S^*I)[1],
\]
which gives a class $e\in\Ext^1_{\sh{A}_{\overline{S}}}(Ri_*\sh{F},Ri_*(\sh{F}\otimes\pi_S^*I))$. For any $\sh{F}$, we have an exact triangle
\begin{equation}\label{eqn:exact_Q_adj}
    Q_{\sh{F}}\to Li^* Ri_*\sh{F}\to \sh{F}
\end{equation}
in $\der(\sh{A}_S)$ given by adjunction.

\begin{lemma}
A class $e\in\Ext^1_{\sh{A}_{\overline{S}}}(Ri_*\sh{F},Ri_*(\sh{F}\otimes\pi_S^*I))$ is given by an $\sh{A}$-module deformations of $\sh{F}$ over $\overline{S}$ if and only if the composition
\[
\Phi_e: Q_{\sh{F}}\to Li^* Ri_*\sh{F}\xrightarrow{Li^*e}Li^* Ri_*(\sh{F}\otimes\pi_S^*I)
\]
is an isomorphism in $\der(\sh{A}_S)$.
\end{lemma}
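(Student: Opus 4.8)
The plan is to unwind the definitions and reduce the statement to a computation with the adjunction triangle \eqref{eqn:exact_Q_adj}. The class $e \in \Ext^1_{\sh{A}_{\overline{S}}}(Ri_*\sh{F}, Ri_*(\sh{F}\otimes\pi_S^*I))$ is, by the octahedral axiom applied to the composition $Ri_*(\sh{F}\otimes\pi_S^*I) \to \overline{\sh{F}} \to Ri_*\sh{F}$, nothing but the datum of an object $\overline{\sh{F}} \in \der(\sh{A}_{\overline{S}})$ together with the identification of its cone structure; conversely any such $e$ produces a cone $\overline{\sh{F}}$, and the content of the lemma is exactly the criterion for this cone to be (the pushforward of) an honest $\sh{O}_{\overline{S}}$-flat $\sh{A}_{\overline{S}}$-module extending $\sh{F}$. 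So first I would make precise the correspondence $e \leftrightarrow \overline{\sh{F}}$: given $e$, set $\overline{\sh{F}}$ to be the shifted cone of $e$, a priori only an object of $\der(\sh{A}_{\overline{S}})$.

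The key reduction is this: $\overline{\sh{F}}$ is a genuine flat $\sh{A}_{\overline{S}}$-module extension of $\sh{F}$ precisely when $Li^*\overline{\sh{F}} \cong \sh{F}$ in degree $0$, i.e.\ when $Li^*\overline{\sh{F}}$ is concentrated in degree zero and equals $\sh{F}$ there (flatness over $\overline{S}$ is then automatic, since $\overline{S} \subset \overline{S}$ is square-zero and $\Tor_{>0}^{\sh{O}_{\overline{S}}}(\overline{\sh{F}}, \sh{O}_S) = 0$ forces flatness). Now apply $Li^*$ to the defining triangle of $\overline{\sh{F}}$, namely $Ri_*(\sh{F}\otimes\pi_S^*I) \to \overline{\sh{F}} \to Ri_*\sh{F} \xrightarrow{e} Ri_*(\sh{F}\otimes\pi_S^*I)[1]$, and compare with the adjunction triangle $Q_{\sh{F}} \to Li^*Ri_*\sh{F} \to \sh{F}$. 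The standard computation (as in \cite{HT10}) is that $Q_{\sh{F}} \cong \sh{F}\otimes\pi_S^*I[1]$, because $Li^*Ri_*$ of a module on $X\times S$ has $\sh{H}^0 = \sh{F}$ and $\sh{H}^{-1} = \sh{F}\otimes\pi_S^*I$, the latter coming from the conormal sequence of the square-zero extension. Chasing the two triangles through $Li^*$, one sees that $Li^*\overline{\sh{F}}$ is concentrated in degree $0$ with $\sh{H}^0 = \sh{F}$ if and only if the connecting map $Q_{\sh{F}} \to Li^*Ri_*\sh{F} \xrightarrow{Li^*e} Li^*Ri_*(\sh{F}\otimes\pi_S^*I)$ induces an isomorphism on the relevant cohomology sheaves — and since both source and target of $\Phi_e$ are computed to have cohomology only in degrees $0$ and $-1$ with matching sheaves, this is equivalent to $\Phi_e$ being a quasi-isomorphism, i.e.\ an isomorphism in $\der(\sh{A}_S)$.

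The main obstacle I expect is the careful bookkeeping of the cohomology sheaves of $Li^*Ri_*(-)$ and the verification that $Q_{\sh{F}} \simeq \sh{F}\otimes\pi_S^*I[1]$ in the $\sh{A}$-linear (as opposed to merely $\sh{O}$-linear) setting: one must check that the adjunction triangle is a triangle in $\der(\sh{A}_S)$ and that the identification of $Q_{\sh{F}}$ respects the $\sh{A}_S$-module structure. This should follow from the fact that $i: X\times S \hookrightarrow X\times \overline{S}$ is induced from the base, so $\sh{A}_{\overline{S}} = \sh{A}\boxtimes\sh{O}_{\overline{S}}$ restricts to $\sh{A}_S$ and all the functors $Li^*$, $Ri_*$ are $\sh{A}$-linear in the obvious sense; the conormal sheaf $I$ is pulled back from $\overline{S}$ and hence central, so no subtlety arises from non-commutativity. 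I would also need to note that $\sh{F}$ being $\sh{O}_S$-flat is what makes $Li^*Ri_*\sh{F}$ have the expected two-term cohomology (higher $\Tor$'s vanish), and that the equivalence-class statement — extensions form a torsor under $\Ext^1_{\sh{A}_S}(\sh{F}, \sh{F}\otimes\pi_S^*I)$ — comes from the long exact sequence obtained by applying $\Hom_{\sh{A}_S}(Q_{\sh{F}}, -)$ to the triangle, once one identifies $\Hom_{\sh{A}_S}(Q_{\sh{F}}, Li^*Ri_*(\sh{F}\otimes\pi_S^*I))$ appropriately. Granting the cohomology computation, the rest is a formal diagram chase.
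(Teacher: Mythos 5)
Your overall strategy is the same as the paper's: realize $e$ as (the class of) a cone $\overline{\sh{F}}$ in $\der(\sh{A}_{\overline{S}})$, apply $Li^*$ to its defining triangle, compare with the adjunction triangle $Q_{\sh{F}}\to Li^*Ri_*\sh{F}\to\sh{F}$, and reduce everything to the natural map $r\colon Li^*\overline{\sh{F}}\to\sh{F}$ being an isomorphism. However, the specific mechanism you propose for that comparison contains a genuine error: $Q_{\sh{F}}$ is \emph{not} isomorphic to $\sh{F}\otimes\pi_S^*I[1]$, and neither $Li^*Ri_*\sh{F}$ nor $Li^*Ri_*(\sh{F}\otimes\pi_S^*I)$ is a two-term complex. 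For a square-zero extension the higher Tor's do not vanish: already for $S=\Spec\mathbb{C}\subset\overline{S}=\Spec\mathbb{C}[\epsilon]/(\epsilon^2)$ one has $\mathrm{Tor}_j^{\mathbb{C}[\epsilon]/(\epsilon^2)}(\mathbb{C},\mathbb{C})=\mathbb{C}$ for all $j\geq 0$, so $Li^*Ri_*(-)$ has nonzero cohomology sheaves in every non-positive degree. Hence your plan of checking that ``both source and target of $\Phi_e$ have cohomology only in degrees $0$ and $-1$ with matching sheaves'' breaks down; this unboundedness is exactly why \cite{HT10} and the paper must pass to \emph{truncated} Atiyah and Kodaira--Spencer classes elsewhere. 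The repair is to avoid computing $Q_{\sh{F}}$ altogether: the octahedral axiom applied to the composite $Q_{\sh{F}}\to Li^*Ri_*\sh{F}\xrightarrow{Li^*e}Li^*Ri_*(\sh{F}\otimes\pi_S^*I)[1]$ identifies the cone of $\Phi_e$ with a shift of the cone of $r$, so $\Phi_e$ is an isomorphism if and only if $r$ is, with no cohomology bookkeeping. This is precisely the two-triangle diagram the paper draws.

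On the remaining step --- that $r$ is an isomorphism if and only if $\overline{\sh{F}}$ is an honest flat extension of $\sh{F}$ --- your treatment is correct and in fact more complete than the paper's, whose proof stops at an explicitly admitted gap. Two points are worth making explicit. First, $\overline{\sh{F}}$ is automatically a sheaf concentrated in degree zero, since it is a Yoneda extension of $i_*\sh{F}$ by $i_*(\sh{F}\otimes\pi_S^*I)$, both honest $\sh{A}_{\overline{S}}$-modules; no ``standard argument about concentration in degree zero'' is needed beyond this. Second, your appeal to the local criterion of flatness is the right one and should be spelled out: since $I^2=0$ one has $I\otimes_{\sh{O}_{\overline{S}}}\overline{\sh{F}}\cong I\otimes_{\sh{O}_S}i^*\overline{\sh{F}}$, and the vanishing of $\mathrm{Tor}_1^{\sh{O}_{X\times\overline{S}}}(\overline{\sh{F}},\sh{O}_{X\times S})$ (which follows from $Li^*\overline{\sh{F}}$ being concentrated in degree zero) is exactly the injectivity of $I\otimes\sh{F}\to\overline{\sh{F}}$ required, together with $S$-flatness of $\sh{F}$, to conclude $\overline{\sh{F}}$ is flat over $\overline{S}$; the converse direction is immediate. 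Your remarks on $\sh{A}$-linearity of $Li^*$ and $Ri_*$ (everything is induced from the base, and $I$ is central) are correct and dispose of the non-commutative subtleties.
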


\begin{proof}
Observe the diagram
\begin{equation}
    \xymatrix{
     & & Q_{\sh{F}}\ar[d]\ar[rd]^{\Phi_e} & \\
    Li^*Ri_*(\sh{F}\otimes\pi_S^*I)\ar[r] & Li^*\grsh{F}\ar[r]\ar[rd]_{r} & Li^*Ri_*\sh{F}\ar[r]\ar[d] & Li^*Ri_*(\sh{F}\otimes\pi_S^*I)[1]\\
     & & \sh{F} & 
    }
\end{equation}
in $\der(\sh{A}_S)$. The morphism $r:Li^*\grsh{F}\to\sh{F}$ is an isomorphism if and only if $\Phi_e$ is. \emph{I think there is some standard argument to show that such complex $\grsh{F}$ must be concentrated in degree zero...}
\end{proof}

Now, we apply $\Hom_{\sh{A}_S}(-,\sh{F}\otimes\pi_S^*I)$ to the exact triangle (\ref{eqn:exact_Q_adj}), it yields
\[
\hspace{-13mm}
\xymatrix@=1em{
 & \Ext^1_{\sh{A}_{\overline{S}}}(Ri_*\sh{F},Ri_*(\sh{F}\otimes\pi_S^*I))\ar@{=}[d] & & \\
\Ext^1_{\sh{A}_S}(\sh{F},\sh{F}\otimes\pi_S^*I)\ar[r] & \Ext^1_{\sh{A}_S}(Li^*Ri_*\sh{F},\sh{F}\otimes\pi_S^*I)\ar[r] & \Ext^1_{\sh{A}_S}(Q_{\sh{F}},\sh{F}\otimes\pi_S^*I)\ar[r]^{\delta} & \Ext^2_{\sh{A}_S}(\sh{F},\sh{F}\otimes\pi_S^*I)
}
\]
The second arrow sends a class $e$ to the morphism
\[
\Psi_e: Q_{\sh{F}}\xrightarrow{\Phi_e}Li^*Ri_*(\sh{F}\otimes\pi_S^*I)[1]\to \sh{F}\otimes\pi_S^*I[1],
\]
where the second map is the adjunction map. The proof consists of following steps.
\begin{enumerate}[(a)]
    \item There exists a class $\pi_{\sh{F}}\in\Ext^1_{\sh{A}_S}(Q_{\sh{F}},\sh{F}\otimes\pi_S^*I_S)$ such that $\Psi_e=\pi_{\sh{F}}$ for any class $e$ given by a deformation.
    \item The obstruction class $\delta(\pi_{\sh{F}})\in\Ext^2_{\sh{A}_S}(\sh{F},\sh{F}\otimes\pi_S^*I)$ is the product of Atiyah class and Kodaira--Spencer class.
    \item If the obstruction class $\delta(\pi_{\sh{F}})$ vanishes, then there exists a class $e$ given by a deformation such that $\Psi_e=\pi_{\sh{F}}$. 
    \item Suppose $e$ and $e'$ are two classes such that $\Psi_e=\Psi_{e'}$. If a class $e$ is given by a deformation, then so is $e'$.
\end{enumerate}

\subsection*{Construction of $\pi_{\sh{F}}$}

We first consider the trivial case $\sh{A}=\sh{O}_X$, $\sh{F}=\sh{O}_{X\times S}$, and the class $e$ given by the deformation $\sh{O}_{X\times\overline{S}}$. Then the morphism $\Psi_e$ is determined by the square-zero extension $S\subset\overline{S}$, which we denote by
\begin{equation}\label{eqn:pi-map}
    \pi:Q_{\sh{O}_{X\times S}}\to Li^*Ri_*\pi_S^*I[1]\to\pi_S^*I[1].
\end{equation}
Note that it is the pull-back of the morphism $Q_{\sh{O}_S}\to I[1]$ in $\der(\sh{O}_S)$ defined in the same way.

Assume that an $\sh{A}$-module deformation $\overline{\sh{F}}$ of $\sh{F}$ over $\overline{S}$ exists. Then for any coherent sheaf $\sh{P}$ on $X\times S$, there are canonical isomorphisms
\[
\sh{F}\otimes Li^*Ri_*\sh{P}\cong Li^*(\overline{\sh{F}}\otimes Ri_*\sh{P})\cong Li^*Ri_*(\sh{F}\otimes\sh{P}).
\]
This implies that $\Psi_e$ is equal to $\pi_{\sh{F}}:=\sh{F}\otimes\pi$, up to a canonical isomorphism $Q_{\sh{F}}\cong\sh{F}\otimes Q_{\sh{O}_{X\times S}}$. 

\subsection*{The obstruction class $\delta(\pi_{\sh{F}})$}

Observe that $Q_{\sh{F}}$ and $\sh{F}\otimes Q_{\sh{O}_{X\times S}}$ may not be isomorphic for general coherent $\sh{A}$-module $\sh{F}$, so we need an alternative definition of $\pi_{\sh{F}}$. To define $\pi_{\sh{F}}$ and study the obstruction class $\delta(\pi_{\sh{F}})$, we recall several facts proved in \cite{HT10}.

\begin{enumerate}[(i)]
    \item There exists an exact triangle
    \[
    \grsh{Q}\to \grsh{H}\to \Delta_*\sh{O}_S\xrightarrow{\delta_0}\grsh{Q}[1]\quad\text{ in }\der(\sh{O}_{S\times S}),
    \]
    where $\Delta:S\to S\times S$ is the diagonal map, such that the exact triangle (\ref{eqn:exact_Q_adj}) is given by Fourier--Mukai transforms
    \[
    \Phi_{\pi_{S\times S}^*\grsh{Q}}(\sh{F})\to\Phi_{\pi_{S\times S}^*\grsh{H}}(\sh{F})\to \sh{F}\xrightarrow{(\pi_{S\times S}^*\delta_0)(\sh{F})}\Phi_{\pi_{S\times S}^*\grsh{Q}}(\sh{F})[1].
    \]
    In particular, the map $\delta:\Ext_{\sh{A}}^1(Q_{\sh{F}},\sh{F}\otimes\pi_S^*I)\to\Ext^2_{\sh{A}}(\sh{F},\sh{F}\otimes\pi_S^*I)$ is the composition with $(\pi_{S\times S}^*\delta_0)(\sh{F})$.

    \item There exists a natural morphism
    \[
    \pi_0:\grsh{Q}\to \Delta_*I[1]\quad\text{ in }\der(\sh{O}_{S\times S})
    \]
    such that $(\pi_{S\times S}^*\pi_0)(\sh{F})=\pi_{\sh{F}}$ for any $\sh{F}$ admitting a deformation. Therefore we define $\pi_{\sh{F}}=(\pi_{S\times S}^*\pi_0)(\sh{F})$ for general coherent $\sh{A}_S$-module $\sh{F}$.

    \item The \emph{universal obstruction class}
    \[
    \omega_0:=\pi_0[1]\circ\delta_0:\Delta_*\sh{O}_S\to\Delta_*I[2]\quad\text{ in }\der(\sh{O}_{S\times S})
    \]
    decomposes into
    \[
    \omega_0:\Delta_*\sh{O}_S\xrightarrow{\alpha_S}\Delta_*\cpx{L}_S[1]\xrightarrow{\Delta_*\kappa(S/\overline{S})[1]}\Delta_*I[2],
    \]
    where $\alpha_S\in\Ext^1_{S\times S}(\Delta_*\sh{O}_S,\Delta_*\cpx{L}_S)$ is the \emph{(truncated) universal Atiyah class}, which is intrinsic and functorial to $S$, and $\kappa(S/\overline{S})\in\Ext^1_{S}(\cpx{L}_S,I)$ is the (truncated) Kodaira--Spencer class of the square-zero extension $S\subset\overline{S}$. (See \cite{HT10}, Definition 2.3 and 2.7 for details.)
\end{enumerate}

\begin{definition}
We define the Atiyah class for a coherent $\sh{A}_S$-module $\sh{F}$ to be
\[
(\pi_{S\times S}^*\alpha_S)(\sh{F})\in \Ext^1_{\sh{A}_S}(\sh{F},\sh{F}\otimes\pi_S^*\cpx{L}_S).
\]
\end{definition}

The proofs can be found in \cite[Section 2.5 and Section 3.1]{HT10}. We remark that while \cite{HT10} only consider coherent sheaves, the proofs are all done at the level of Fourier--Mukai kernels. Therefore it also works for any coherent $\sh{A}$-modules.

\subsection*{Existence of deformations}

We will show that if the obstruction class $\delta(\pi_{\sh{F}})=(\pi_{S\times S}^*\omega_0)(\sh{F})$ vanishes, then there exists an $\sh{A}$-module extension of $\sh{F}$ over $\overline{S}$.

First we remark that we may assume both $X$ and $S$ are affine as in \cite{HT10}, Section 3.3. Although the existence of deformations is \emph{not} a local property, for any given class $e\in\Ext^1_{\sh{A}_S}(Li^*Ri_*\sh{F},\sh{F}\otimes\pi_S^*I)$, the conditions that $\Phi_e$ being an isomorphism and $\Psi_e=\pi_{\sh{F}}$ can be checked locally. In other words, if $\delta(\pi_{\sh{F}})=0$ but there is no deformation of $\sh{F}$, then there is a class $e$ such that $\Psi_e=\pi_{\sh{F}}$ but $e$ is not given by a deformation. Then we can find an affine open set $U$ such that $\Phi_e|_U$ is not an isomorphism but $\Psi_e|_U=\pi_{\sh{F}}|_U$, which is a contraction.

Therefore we may assume that $X=\Spec(B)$, $\sh{A}=A$ is an $R$-algebra, $S=\Spec(R)$, and $\overline{S}=\Spec(\overline{R})$. Let $M$ be a (left) $A_R:=A\otimes_{\mathbb{C}} R$-module, flat over $R$. For convenience, a tensor product $\otimes$ without subscript is over $R$. We first recall the standard obstruction theory for modules (cf. \cite{Lau79}).

Choose a (possibly-infinite) free resolution of $M$
\[
\ldots\to A_R^{\oplus n_{-3}}\xrightarrow{d_{-3}} A_R^{\oplus n_{-2}}\xrightarrow{d_{-2}} A_R^{\oplus n_{-1}}\to M\to 0.
\]
Consider the trivial deformation $A_{\overline{R}}^{\oplus n_{\bullet}}$ of $A_R^{\oplus n_{\bullet}}$. Then we choose an arbitrary lifting $d'_{\bullet}:A^{\oplus n_{\bullet}} _{\overline{R}}\to A^{\oplus n_{\bullet+1}}_{\overline{R}}$ of $d_{\bullet}$. Since $(d'_{\bullet+1}\circ d'_{\bullet})|_R=d_{\bullet+1}\circ d_{\bullet}=0$, the map $d'_{\bullet+1}\circ d'_{\bullet}$ factors into
\[
d'_{\bullet+1}\circ d'_{\bullet}:A_{\overline{R}}^{\oplus n_{\bullet}}\to A_{R}^{\oplus n_{\bullet}}\xrightarrow{\ob_{\bullet}} A_R^{\oplus n_{\bullet +2}}\otimes I\to A_{\overline{R}}^{\oplus n_{\bullet +2}},
\]
where the first and third arrows are given by the extension $0\to I\to \overline{R}\to R\to 0$. It is well-known that the class
\begin{equation}\label{eqn:standard-ob}
    \{\ob_{\bullet}\}\in\Hom_{A_R}(A_R^{\oplus n_{\bullet}},A_R^{\oplus n_{\bullet+2}}\otimes I)
\end{equation}
is a $2$-cocycle defining a class in $\Ext^2_{A_R}(M,M)$ which is independent of the choice of the resolution $(A_R^{\oplus n_{\bullet}},d_{\bullet})$ and lifting $d'_{\bullet}$. We will show that this obstruction class is the same as $\delta(\pi_{\sh{F}})$.

Recall that the universal obstruction class $\omega_0\in\Ext^2_{S\times S}(\Delta_*\sh{O}_S,\Delta_*I)$ is represented by the $2$-extension
\[
0 \to \Delta_*I\to \grsh{J}|_{S\times S}\to \sh{O}_{S\times S}\to\Delta_*\sh{O}_S\to 0,
\]
where $\grsh{J}$ is the ideal sheaf defining $\overline{S}\subset \overline{S}\times\overline{S}$. We omit the details but a crucial consequence is that if $M=A_R^{\oplus n}$ is free, then the obstruction class $(\pi_{S\times S}^*\omega_0)(M)\in\Ext^2_{A_R}(M,M\otimes I)$ is represented by the $2$-extension (of $A_R$-modules)
\[
0\to A_R^{\oplus n}\otimes I\to K^{\oplus n}|_R\to \Gamma^{\oplus n}|_R\to A_R^{\oplus n}\to 0,
\]
where the restriction $-|_R$ is the tensor product $-\otimes_{\overline{R}} R$, $\Gamma=A_{\overline{R}}\otimes_{\mathbb{C}} R$ is the free $A_{\overline{R}}$-module, the arrow
\[
\Gamma=A_{\overline{R}}\otimes_{\mathbb{C}} R= A\otimes_{\mathbb{C}}\overline{R}\otimes_{\mathbb{C}}R\to A_R=A\otimes_{\mathbb{C}} R
\]
is induced by the $\overline{R}$-linear evaluation map $\overline{R}\otimes_{\mathbb{C}} R\to R$ via $\overline{R}\to R$, and $K=\ker(\Gamma\to A_R)$ is the kernel. Since $\overline{R}\otimes_{\mathbb{C}}R$ is a free $\overline{R}$-module, we may choose a (non-canonical) splitting
\[
\overline{R}\otimes_{\mathbb{C}} R\cong L\oplus\overline{R}
\]
such that the evaluation map $\overline{R}\otimes_{\mathbb{C}} R$ is given by the short exact sequence
\[
0\to L\oplus I\to L\oplus\overline{R}\to R\to 0. 
\]
Then $\Gamma\cong N\oplus A_{\overline{R}}$, $K\cong N\oplus (A_R\otimes I)$, where $N=A\otimes_{\mathbb{C}} L$ is a free $A_{\overline{R}}$-module.

Let $A_R^{\oplus n_{\bullet}}\to M$ be a free resolution. Then it associates a $2$-extension
\begin{equation}\label{eqn:2-ext-complex}
    0 \to A_R^{\oplus n_{\bullet}}\otimes I\to K^{\oplus n_{\bullet}}|_R\to \Gamma^{\oplus n_{\bullet}}|_R\to A_R^{\oplus n_{\bullet}}\to 0
\end{equation}
of $A_R$-modules, and a short exact sequence 
\[
0\to K^{\oplus n_{\bullet}}\to \Gamma^{\oplus n_{\bullet}}\to A_R^{\oplus n_{\bullet}}\to 0
\]
of complexes of $A_{\overline{R}}$-modules, where the differentials are arbitrarily chosen lifting of the differentials in (\ref{eqn:2-ext-complex}).  We write down the differentials explicitly with respect to the splitting $\Gamma\cong N\oplus A_{\overline{R}}$ and $K\cong N\oplus (A_R\otimes I)$:
\[
\xymatrix{
0\ar[r] & N^{\oplus n_{\bullet}}\oplus (A_R\otimes I)^{\oplus n_{\bullet}}\ar[r]\ar[d]_{\begin{psmallmatrix}\ast & \ast \\ \eta_{\bullet} & \ast \end{psmallmatrix}} & N^{\oplus n_{\bullet}}\oplus A_{\overline{R}}^{\oplus n_{\bullet}}\ar[r]\ar[d]_{\begin{psmallmatrix}\ast & \gamma_{\bullet} \\ \eta_{\bullet} & d'_{\bullet} \end{psmallmatrix}} & A_R^{\oplus n_{\bullet}}\ar[r]\ar[d]^{d_{\bullet}} & 0\\
0\ar[r] & N^{\oplus n_{\bullet+1}}\oplus (A_R\otimes I)^{\oplus n_{\bullet+1}}\ar[r] & N^{\oplus n_{\bullet+1}}\oplus A_{\overline{R}}^{\oplus n_{\bullet+1}}\ar[r] & A_R^{\oplus n_{\bullet+1}}\ar[r] & 0
}
\]
and
\[
\hspace{-8mm}
\xymatrix@C=1em{
0\ar[r] & (A_R\otimes I)^{\oplus n_{\bullet}}\ar[r]\ar[d] & N^{\oplus n_{\bullet}}|_R\oplus (A_R\otimes I)^{\oplus n_{\bullet}}\ar[r]\ar[d]_{\begin{psmallmatrix}\ast & \ast \\ \sigma_{\bullet} & \ast \end{psmallmatrix}} & N^{\oplus n_{\bullet}}|_R\oplus A_R^{\oplus n_{\bullet}}\ar[r]\ar[d]_{\begin{psmallmatrix}\ast & \beta_{\bullet} \\ \ast & d_{\bullet} \end{psmallmatrix}} & A_R^{\oplus n_{\bullet}}\ar[r]\ar[d]^{d_{\bullet}} & 0\\
0\ar[r] & (A_R\otimes I)^{\oplus n_{\bullet+1}}\ar[r] & N^{\oplus n_{\bullet+1}}|_R\oplus (A_R\otimes I)^{\oplus n_{\bullet+1}}\ar[r] & N^{\oplus n_{\bullet+1}}|_R\oplus A_R^{\oplus n_{\bullet+1}}\ar[r] & A_R^{\oplus n_{\bullet+1}}\ar[r] & 0
}
\]
Observe that
\[
\left\{A_R^{\oplus n_{\bullet}}\xrightarrow{\beta_{\bullet}} N^{\oplus n_{\bullet+1}}|_R\xrightarrow{\sigma_{\bullet+1}} (A_R\otimes I)^{\oplus n_{\bullet+2}}\right\}\in\Hom_{A_R}(A_R^{\oplus n_{\bullet}},A_R^{\oplus n_{\bullet+2}}\otimes I)
\]
defines a $2$-cocycle which corresponds to the class of the $2$-extension (\ref{eqn:2-ext-complex}) in $\Ext^2_{A_R}(M,M\otimes I)$, i.e., the obstruction class $\delta(\pi_{\sh{F}})$.

On the other hand, $d'_{\bullet}:A_{\overline{R}}^{\oplus n_{\bullet}}\to A_{\overline{R}}^{\oplus n_{\bullet+1}}$ is a lifting of $d_{\bullet}$. The differentials on $\Gamma^{\oplus n_{\bullet}}$ implies that
\[
-d'_{\bullet+1}\circ d'_{\bullet}=\eta_{\bullet+1}\circ\gamma_{\bullet}: A_{\overline{R}}^{\oplus n_{\bullet}}\to N^{\oplus n_{\bullet+1}}\to A_{\overline{R}}^{\oplus n_{\bullet+2}}.
\]
Since $\eta_{\bullet+1}$ factors through $N^{\oplus n_{\bullet+1}}\to (A_R\otimes I)^{\oplus n_{\bullet+2}}\to A_{\overline{R}}^{\oplus n_{\bullet+2}}$, the composition can be decomposed into
\[
\eta_{\bullet+1}\circ\gamma_{\bullet}:A_{\overline{R}}^{\oplus n_{\bullet}}\to A_R^{\oplus n_{\bullet}}\xrightarrow{\gamma_{\bullet}|_R} N^{\oplus n_{\bullet+1}}|_R\xrightarrow{\eta_{\bullet+1}|_R}(A_R\otimes I)^{\oplus n_{\bullet+2}}\to A_{\overline{R}}^{\oplus n_{\bullet+2}}.
\]
By definition, $\gamma_{\bullet}|_R=\beta_{\bullet}$ and $\eta_{\bullet+1}|_R=\sigma_{\bullet+1}$. This shows that the classical obstruction class (\ref{eqn:standard-ob}) defined by the lifting $d'_{\bullet}$ is the class $-\delta(\pi_{\sh{F}})$.

Finally, suppose $e$ is the class corresponding to a deformation $(A_{\overline{R}}^{\oplus n_{\bullet}},d'_{\bullet})$, and $e'$ is another class such that $\Psi_e=\Psi_{e'}$. Then $e-e'$ is in the image of a $1$-cocycle
\[
\{f_{\bullet}\}\in\Hom_{A_R}(A_R^{\oplus n_{\bullet}},A_R^{\oplus n_{\bullet+1}}\otimes I).
\]
Then it is a standard fact that $(A_{\overline{R}}^{\oplus n_{\bullet}},d'_{\bullet}+\tilde{f}_{\bullet})$ also defines a deformation, where
\[
\tilde{f}_{\bullet}:A_{\overline{R}}^{\oplus n_{\bullet}}\to A_R^{\oplus n_{\bullet}}\xrightarrow{f_{\bullet}}A_R^{\oplus n_{\bullet+1}}\to A_{\overline{R}}^{\oplus n_{\bullet+1}},
\]
which then corresponds to the class $e'$.

\section{Donaldson--Thomas invariants}\label{sec:define_DT}

\subsection{Symmetric obstruction theories}

We now restrict our attention to $(X,\sh{A})$ being CY3. Let $M$ be a quasi-projective coarse moduli scheme of stable $\sh{A}$-modules with a universal twisted $\sh{A}_M$-module $\sh{F}$ on $X\times M$.

First we construct a symmetric bilinear form
\[
\theta:R\pi_{M*}R\SHom_{\sh{A}_M}(\sh{F},\sh{F})\to \Big(R\pi_{M*}R\SHom_{\sh{A}_M}(\sh{F},\sh{F})\Big)^{\vee}[1].
\]
We write $\cpx{F}=R\SHom_{\sh{A}_M}(\sh{F},\sh{F})$. For a given isomorphism $\omega_{\sh{A}}\to\sh{A}$ of $\sh{A}$-bimodules, it induces an isomorphism
\[
R\SHom_{\sh{A}_M}(\sh{F},\omega_{\sh{A}}\otimes_{\sh{A}}\sh{F})\to\cpx{F}\quad\text{ in }\der(\sh{O}_{X\times M}).
\]
Taking $(-)^{\vee}[-1]$ on both sides, it gives an isomorphism
\begin{equation}\label{eqn:sym-bilinear}
    \left(R\pi_{M*}\cpx{F}\right)^{\vee}[-1]\to\Big(R\pi_{M*}R\SHom_{\sh{A}_M}(\sh{F},\omega_{\sh{A}}\otimes_{\sh{A}}\sh{F})\Big)^{\vee}[-1]\quad\text{ in }\der(\sh{O}_M),
\end{equation}
and the right hand side is isomorphic to
\[
R\pi_{M*}\Big(\big(\pi_X^*\omega_{\sh{A}}\otimes_{\sh{A}_M}\sh{F}\big)^{\vee}\otimes_{\sh{A}_M}\sh{F}\otimes\pi_X^*\omega_X\Big)[2].
\]
Note that
\begin{align*}
    \big(\pi_X^*\omega_{\sh{A}}\otimes_{\sh{A}_M}\sh{F}\big)^{\vee} &=R\SHom_{\sh{O}_{X\times M}}(\pi_X^*\omega_{\sh{A}}\otimes_{\sh{A}_M}\sh{F},\sh{O}_{X\times M})\\
     &\cong R\SHom_{\sh{A}_M}(\sh{F},R\SHom_{\sh{O}_{X\times M}}(\pi_X^*\omega_{\sh{A}},\sh{O}_{X\times M}))\\
     &\cong R\SHom_{\sh{A}_M}(\sh{F},\sh{A}_M\otimes\pi_X^*\omega_X^{\vee}) & \text{ in }\der(\sh{A}_M^{\op}),
\end{align*}
where the last isomorphism is given by $\pi_X^*\omega_{\sh{A}}\cong\SHom_{\sh{O}_{X\times M}}(\pi_X^*\sh{A},\pi_X^*\omega_X)\cong\sh{A}_M^{\vee}\otimes\pi_X^*\omega_X$.
Therefore
\[
\big(\pi_X^*\omega_{\sh{A}}\otimes_{\sh{A}_M}\sh{F}\big)^{\vee}\otimes_{\sh{A}_M}\big(\sh{F}\otimes\pi_X^*\omega_X\big)\cong R\SHom_{\sh{A}_M}(\sh{F},(\sh{F}\otimes\pi_X^*\omega_X)\otimes\pi_X^*\omega_X^{\vee})\cong\cpx{F}.
\]
Thus (\ref{eqn:sym-bilinear}) is an isomorphism
\[
\theta:(R\pi_{M*}\cpx{F})^{\vee}[-1]\to R\pi_{M*}\cpx{F}\,[2]=\Big((R\pi_{M*}\cpx{F})^{\vee}[-1]\Big)^{\vee}[1].
\]
In fact, we have $\theta^{\vee}[1]=\theta$, so $\theta$ is symmetric.

\begin{theorem}\label{thm:sym_ob}
The moduli scheme $M$ carries a symmetric obstruction theory
\[
\Big(\tau^{[1,2]}R\pi_{M*}R\SHom_{\sh{A}_M}(\sh{F},\sh{F})\Big)^{\vee}[-1]\to\cpx{L}_M
\]
which in particular is a perfect obstruction theory.
\end{theorem}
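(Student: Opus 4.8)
The plan is to truncate the obstruction theory $\cpx E\to\cpx L_M$ of Corollary~\ref{cor:ob_for_M} and to transport the form $\theta$ along that truncation. Write $\mathcal G:=R\pi_{M*}R\SHom_{\sh A_M}(\sh F,\sh F)$, so that $\cpx E=\mathcal G^{\vee}[-1]$ and $\theta$ reads as a self-dual isomorphism $\cpx E=\mathcal G^{\vee}[-1]\xrightarrow{\sim}\mathcal G[2]=\cpx E^{\vee}[1]$. First I would determine the cohomology of $\mathcal G$: since $\sh F$ is flat over $M$ and $\Coh(\sh A)$ has cohomological dimension $3$, $\mathcal G$ is a perfect complex with cohomology in degrees $[0,3]$, with fibre $R\Hom_{\sh A}(F,F)$ over $[F]$. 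Stability gives $\Hom_{\sh A}(F,F)=\mathbb C$, so cohomology and base change together with the section $\sh O_{X\times M}\xrightarrow{\id_{\sh F}}R\SHom_{\sh A_M}(\sh F,\sh F)$ (which makes sense even though $\sh F$ is only a twisted family) yield $\mathcal H^{0}(\mathcal G)\cong\sh O_M$; and since $\sh A$ is CY$3$, relative Serre duality gives $\mathcal G\cong\mathcal G^{\vee}[-3]$ (the isomorphism underlying $\theta$), whence $\mathcal H^{3}(\mathcal G)\cong\mathcal H^{0}(\mathcal G)^{\vee}\cong\sh O_M$. Both boundary cohomology sheaves being locally free, $\tau^{[1,2]}\mathcal G$ is again perfect, of amplitude exactly $[1,2]$, so $\cpx E':=(\tau^{[1,2]}\mathcal G)^{\vee}[-1]$ is perfect of amplitude $[-1,0]$; moreover, dualizing the two canonical truncation triangles of $\mathcal G$ (with respect to $\tau^{\le 0}$ and $\tau^{\ge 3}$) identifies $\cpx E'=\tau^{[-1,0]}\cpx E$, with $\mathcal H^{-2}(\cpx E)\cong\sh O_M\cong\mathcal H^{1}(\cpx E)$ the only cohomology this truncation discards.

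Next I would build the truncated obstruction theory. Replace $\cpx L_M$ by $\tau^{\ge -1}\cpx L_M$ — harmless, since being an obstruction theory only involves $\mathcal H^{0}$ and $\mathcal H^{-1}$, and, unlike $\cpx L_M$ itself, $\tau^{\ge -1}\cpx L_M$ is concentrated in degrees $[-1,0]$. Composing $\tau^{\le 0}\cpx E\to\cpx E\to\tau^{\ge -1}\cpx L_M$ gives a morphism which, as $\tau^{\le -2}\cpx E$ is concentrated in degree $-2$, factors uniquely through $\cpx E'=\tau^{[-1,0]}\cpx E=\tau^{\ge -1}\tau^{\le 0}\cpx E$: existence follows from $\Hom(\tau^{\le -2}\cpx E,\tau^{\ge -1}\cpx L_M)=0$ and uniqueness from $\Hom(\tau^{\le -2}\cpx E[1],\tau^{\ge -1}\cpx L_M)=0$, both vanishings being purely for degree reasons. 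This produces the morphism $\cpx E'\to\cpx L_M$. Because $\mathcal H^{0}(\cpx E')\to\mathcal H^{0}(\cpx E)$ and $\mathcal H^{-1}(\cpx E')\to\mathcal H^{-1}(\cpx E)$ are isomorphisms compatible with the maps to $\cpx L_M$, the properties ``$\mathcal H^{0}$ an isomorphism, $\mathcal H^{-1}$ surjective'' pass from $\cpx E\to\cpx L_M$ to $\cpx E'\to\cpx L_M$; since $\cpx E'$ has amplitude $[-1,0]$ this is a perfect obstruction theory, and by construction it is the truncation of the obstruction theory of Corollary~\ref{cor:ob_for_M}.

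Finally, for symmetry I would note that the self-dual isomorphism $\theta\colon\cpx E\xrightarrow{\sim}\cpx E^{\vee}[1]$ commutes with truncation and interchanges the two boundary pieces $\mathcal H^{-2}(\cpx E)[2]$ and $\mathcal H^{1}(\cpx E)[-1]$, hence restricts to a self-dual isomorphism $\cpx E'=\tau^{[-1,0]}\cpx E\xrightarrow{\sim}\tau^{[-1,0]}(\cpx E^{\vee}[1])=(\cpx E')^{\vee}[1]$; as the Behrend--Fantechi notion of a symmetric obstruction theory requires no compatibility between this form and the morphism to $\cpx L_M$, this completes the argument, and $\cpx E'=(\tau^{[1,2]}R\pi_{M*}R\SHom_{\sh A_M}(\sh F,\sh F))^{\vee}[-1]$ matches the statement. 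The main obstacle is the middle paragraph: constructing $\cpx E'\to\cpx L_M$ canonically and verifying that it really is the truncation of $\cpx E\to\cpx L_M$, which forces one to handle the (possibly non-split) truncation triangles carefully — in particular it is essential to pass to $\tau^{\ge -1}\cpx L_M$, since the cotangent complex of a coarse moduli space need not be bounded below. A secondary point needing care is the identification $\mathcal H^{0}(\mathcal G)\cong\sh O_M$ when $\sh F$ is only a twisted family, though $\id_{\sh F}$ and $R\SHom$ descend as already recorded.
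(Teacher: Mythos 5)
Your proposal is correct and follows essentially the same route as the paper: both arguments rest on stability giving $\mathcal{H}^{0}(R\pi_{M*}R\SHom_{\sh{A}_M}(\sh{F},\sh{F}))\cong\sh{O}_M$ and the CY3 pairing $\theta$ giving $\mathcal{H}^{3}\cong\sh{O}_M$ by duality, so that the $[1,2]$-truncation stays perfect and inherits both the map to $\cpx{L}_M$ and the symmetric form. The paper realizes the truncation as iterated cones of the scalar map $\sigma$ and its Serre dual rather than via canonical truncation triangles, and it leaves implicit the degree-reason factorization through $\tau^{\ge -1}\cpx{L}_M$ that you spell out, but these are presentational differences only.
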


\begin{proof}
Consider the map $\sigma:\sh{O}_M\to R\pi_{M*}\cpx{F}$ induced by the scalar map $R\pi_M^*\sh{O}_M=\sh{O}_{X\times M}\to\cpx{F}$. For any point $m\in M$, this map induces the scalar map $\sigma_m:\mathbb{C}\to\Hom_{\sh{A}}(\sh{F}_m,\sh{F}_m)$, which is an isomorphism since $\sh{F}_m$ is stable. Thus the cone of $\sigma$ is the truncated complex $\tau^{\geq 1}R\pi_{M*}\cpx{E}$.

On the other hand, we may consider 
\[
R\pi_{M*}\cpx{F}\xrightarrow{\theta^{-1}[-2]}(R\pi_{M*}\cpx{F})^{\vee}[-3]\xrightarrow{\sigma^{\vee}[-3]}\sh{O}_M[-3].
\]
The induced map $\Ext^3_{\sh{A}}(\sh{F}_m,\sh{F}_m)\to\mathbb{C}$ on each point $m\in M$ is dual to the scalar map, which is an isomorphism. The cone of the composition $\tau^{\geq 1}R\pi_{M*}\cpx{F}\to\sh{O}_M[-3]$ is the truncated complex $\tau^{[1,2]}R\pi_{M*}\cpx{F}$.

Therefore, $\tau^{[1,2]}R\pi_{M*}\cpx{F}$ is perfect of amplitude in degree $1$ and $2$, and the obstruction theory $(R\pi_{M*}\cpx{F})^{\vee}[-1]\to\cpx{L}_M$ induces a morphism
\[
(\tau^{[1,2]}R\pi_{M*}\cpx{F})^{\vee}[-1]\to\cpx{L}_M.
\]
Furthermore, by the construction, $\theta$ induces a symmetric bilinear form 
\[
\theta:(\tau^{[1,2]}R\pi_{M*}\cpx{F})^{\vee}[-1]\to\big((\tau^{[1,2]}R\pi_{M*}\cpx{F})^{\vee}[-1]\big)^{\vee}[1].
\]
\end{proof}

If the moduli space $M=M^{s,h}(X,\sh{A})$ is projective (for example, when the Hilbert polynomial $h$ has coprime coefficients), then we may define the DT invariant via integration
\[
\int_{[M]_{\vir}}1,
\]
which equals to the Behrend function weighted Euler characteristic $\chi(M,\nu_M)$ by \cite{Beh09}. In particular, this invariant depends only on the scheme structure of the moduli space $M$, which depends only on the abelian category $\Coh(\sh{A})$ with a chosen stability condition.

\subsection{Donaldson--Thomas invariants}

In classical Donaldson--Thomas theory, the Hilbert schemes (with a fixed curve class) are isomorphic to the moduli spaces of stable sheaves with fixed determinant. Therefore one may study Donaldson--Thomas invariants on Hilbert schemes. We do not have the notion of determinant for $\sh{A}$-modules, but we can still define Donaldson--Thomas type invariants on Hilbert schemes under some suitable assumptions. 

\begin{lemma}\label{lem:dt-on-hilb}
Let $h$ be a polynomial of degree $\leq 1$. Suppose the stalk $\sh{A}_{\eta}$ at generic point $\eta\in X$ is a division algebra, then there is a natural morphism
\begin{equation}\label{eqn:hilb2M}
    \Hilb^h(\sh{A})\to M^{s,p-h}(\sh{A}),
\end{equation}
sending any quotient $\sh{A}\to\sh{F}$ to its kernel, where $p$ is the Hilbert polynomial of $\sh{A}$.
\end{lemma}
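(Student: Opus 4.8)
The plan is to show that for a quotient $\sh{A}\twoheadrightarrow\sh{F}$ with $p_X(\sh{F})=h$ of degree $\leq 1$, the kernel $\sh{K}=\ker(\sh{A}\to\sh{F})$ is a stable $\sh{A}$-module with Hilbert polynomial $p-h$, and that this assignment is functorial in families (so that it defines a morphism of moduli functors, hence of schemes, using that $\Hilb^h(\sh{A})$ represents its functor by Proposition~\ref{prop:hilb-A}).

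First I would establish that $\sh{K}$ is pure. Since $\sh{A}$ is locally free over $\sh{O}_X$ with $X$ smooth of dimension $3$, $\sh{A}$ is pure of dimension $3$; as $\sh{K}\subset\sh{A}$, any torsion subsheaf of $\sh{K}$ would be a torsion subsheaf of $\sh{A}$, so $\sh{K}$ is pure of dimension $3$ (it is nonzero of full rank because $\sh{F}$ has dimension $\leq 1 < 3$). Next, the reduced Hilbert polynomials: from $0\to\sh{K}\to\sh{A}\to\sh{F}\to 0$ we get $p_X(\sh{K})=p-h$, and since $\sh{F}$ has dimension $\leq 1$ the leading (degree $3$) and degree-$2$ terms of $p$ and $p-h$ agree, so $\sh{K}$ and $\sh{A}$ have the same rank $r$ and the same reduced Hilbert polynomial up to the lowest two coefficients; in particular the slope-type invariants that control stability in degrees $3,2$ are unchanged. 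The key input is then the generic simplicity hypothesis: $\sh{A}_\eta$ being a division algebra forces $\sh{A}$ to be a \emph{stable} $\sh{A}$-module. Indeed any $\sh{A}$-submodule $\sh{G}\subsetneq\sh{A}$ of full rank has $\sh{G}_\eta$ an $\sh{A}_\eta$-submodule of $\sh{A}_\eta$, hence $0$ or all of $\sh{A}_\eta$; full rank rules out $0$, so $\sh{G}=\sh{A}$ generically, forcing $\sh{A}/\sh{G}$ to be lower-dimensional, which makes the reduced Hilbert polynomial of $\sh{G}$ strictly smaller than that of $\sh{A}$ in lower order — this is exactly the argument that $\sh{A}$ has no destabilizing submodule. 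Then for any $\sh{A}$-submodule $\sh{G}\subset\sh{K}\subset\sh{A}$, stability of $\sh{K}$ reduces to the same comparison inside $\sh{A}$, so $\sh{K}$ is stable with $p_X(\sh{K})=p-h$.

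For the morphism of schemes: on $X\times\Hilb^h(\sh{A})$ the universal quotient $\sh{A}_{\Hilb}\to\tilde{\sh{F}}$ has $\sh{O}_{\Hilb}$-flat kernel $\tilde{\sh{K}}$ (flatness of $\tilde{\sh{K}}$ follows since both $\sh{A}_{\Hilb}$ and $\tilde{\sh{F}}$ are flat and the Hilbert polynomial of the fibers is locally constant, so the sequence stays exact on fibers). By the fiberwise analysis above $\tilde{\sh{K}}$ is a flat family of stable $\sh{A}$-modules with Hilbert polynomial $p-h$, hence classified by a morphism $\Hilb^h(\sh{A})\to M^{s,p-h}(\sh{A})$; this is the desired map, and it manifestly sends $(\sh{A}\to\sh{F})$ to $[\sh{K}]$ on closed points.

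The main obstacle I expect is verifying that taking kernels is flat in families and behaves well with base change — i.e. that $\tilde{\sh{K}}$ is genuinely a flat family, so that the classifying morphism exists as a morphism of schemes (not just a bijection on points). This is the standard flatness-of-kernels lemma (the quotient being flat with fiberwise-constant Hilbert polynomial forces the kernel to be flat and its formation to commute with base change), but one must check the $\sh{A}$-module structure is preserved throughout and that the Hilbert polynomial of the fiber is indeed constant, which uses that $h$ has degree $\leq 1$ so the top two coefficients of $p-h$ are fixed. A secondary point is being careful that ``$\sh{A}$ is a stable $\sh{A}$-module'' uses purity of $\sh{A}$ (from local freeness) together with the division-algebra hypothesis; once that is in hand, stability of every kernel is immediate.
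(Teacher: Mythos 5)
Your proposal is correct and follows essentially the same route as the paper's (two-sentence) proof: the division-algebra hypothesis at the generic point forces every nonzero $\sh{A}$-submodule of $\sh{A}$, hence of the kernel, to have full rank, so stability reduces to the trivial comparison of Hilbert polynomials of full-rank subsheaves — the analogue of torsion-free rank-one sheaves being stable. The extra details you supply (purity of the kernel as a subsheaf of the locally free $\sh{A}$, and flatness of the kernel in families so the map is a genuine morphism of schemes) are standard and correctly handled, and are simply left implicit in the paper.
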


\begin{proof}
This is the analogue of the classical result that all torsion-free rank $1$ sheaves are stable. Since $\sh{A}_{\eta}$ is a division algebra, any $\sh{A}$-submodule of $\sh{A}$ has the same rank as $\sh{A}$.
\end{proof}

\begin{proposition}\label{prop:dt-on-hilb}
Under the assumptions in Lemma~\ref{lem:dt-on-hilb}, if $H^1(X,\sh{A})=0$, then the natural morphism (\ref{eqn:hilb2M}) is an open immersion.
\end{proposition}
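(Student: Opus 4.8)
The plan is to show that the morphism \eqref{eqn:hilb2M} is injective on points, unramified, and a bijection onto its image on tangent spaces — then invoke that an unramified monomorphism is an open immersion (or, since we already know the target is a scheme of finite type and the source is projective hence finite type, argue directly via the infinitesimal criterion together with injectivity). Concretely, I would proceed as follows.

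First I would establish \emph{injectivity on closed points}. A closed point of $\Hilb^h(\sh{A})$ is a surjection $\sh{A}\twoheadrightarrow\sh{F}$ of $\sh{A}$-modules with kernel $\sh{K}\subset\sh{A}$, and by Lemma~\ref{lem:dt-on-hilb} the association sends this to the stable $\sh{A}$-module $\sh{K}$ with Hilbert polynomial $p-h$. To recover the quotient data from $\sh{K}$ it suffices to recover the inclusion $\sh{K}\hookrightarrow\sh{A}$ up to the $\mathbb{C}^*$-action, i.e. to show $\Hom_{\sh{A}}(\sh{K},\sh{A})$ is one-dimensional and every nonzero map is injective with the same image. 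Since $\sh{A}_\eta$ is a division algebra and $\sh{K}_\eta\to\sh{A}_\eta$ is a nonzero map of $\sh{A}_\eta$-modules of the same rank, every nonzero $\sh{A}$-morphism $\sh{K}\to\sh{A}$ is generically an isomorphism, hence injective; and by purity of $\sh{A}$ its image is a fixed maximal submodule — one argues two such images must coincide because their quotients are torsion supported in codimension $\geq 1$ while stability of $\sh{K}$ and the rank-$1$-ness over $\sh{A}_\eta$ pin down the saturation. Then $\Hom_{\sh{A}}(\sh{K},\sh{A})$ is at most one-dimensional (two maps differ by a scalar after comparing generic isomorphisms), giving injectivity of \eqref{eqn:hilb2M} on points.

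Next I would check the morphism is \emph{unramified and an isomorphism on tangent/obstruction data}, which is where $H^1(X,\sh{A})=0$ enters. The deformation theory of the quotient $[\sh{A}\to\sh{F}]$, i.e. of the submodule $\sh{K}\subset\sh{A}$, is governed by $\Hom_{\sh{A}}(\sh{K},\sh{F})$ and $\Ext^1_{\sh{A}}(\sh{K},\sh{F})$, while the deformation theory of $\sh{K}$ as an abstract stable module is governed by $\Ext^i_{\sh{A}}(\sh{K},\sh{K})$. I would compare the two via the long exact sequences obtained from $0\to\sh{K}\to\sh{A}\to\sh{F}\to 0$ after applying $\Hom_{\sh{A}}(\sh{K},-)$ and $\Hom_{\sh{A}}(-,\sh{K})$: the relevant terms $\Hom_{\sh{A}}(\sh{K},\sh{A})=\mathbb{C}$, $\Ext^1_{\sh{A}}(\sh{K},\sh{A})$, and $H^i(X,\sh{A})=\Ext^i_{\sh{A}}(\sh{A},\sh{A})$ control the discrepancy. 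Using $H^0(X,\sh{A})$-normalization together with the vanishing $H^1(X,\sh{A})=0$, the connecting maps force the natural map on tangent spaces $T_{[\sh{A}\to\sh{F}]}\Hilb^h(\sh{A})\to T_{[\sh{K}]}M^{s,p-h}(\sh{A})$ to be an isomorphism (and likewise surjectivity of the comparison forces unramifiedness). So \eqref{eqn:hilb2M} is a monomorphism that is formally étale onto its image.

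Finally, with injectivity on points and étaleness onto the image in hand, I would conclude: a monomorphism of finite-type schemes that is unramified and induces isomorphisms on all tangent spaces is an open immersion onto a locally closed subscheme, and here the image is open because the obstruction spaces also match (both being $\Ext^2$/$H^2$ terms, with $H^1(X,\sh{A})=0$ again removing the obstruction discrepancy), so the target is smooth over the source along the image in the relative sense that there is no extra deformation direction — equivalently, the fibered category of deformations of $\sh{K}$ that happen to sit inside $\sh{A}$ is equivalent to \emph{all} deformations of $\sh{K}$. I expect the main obstacle to be the second step: carefully bookkeeping the two long exact sequences to show the comparison map on $\Ext^1$'s is an isomorphism rather than merely injective, and in particular making precise why $H^1(X,\sh{A})=0$ is exactly what is needed to kill the obstruction to extending a deformation of $\sh{K}$ to a deformation of the pair — the analogue in the classical Hilbert-scheme case of the vanishing $H^1(\sh{O}_X)=0$ that makes the Hilbert scheme an open subscheme of the moduli of fixed-determinant sheaves.
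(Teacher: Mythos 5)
Your skeleton (injectivity on points, then formal \'etaleness, then conclude open immersion) has the right shape, but both key steps, as you set them up, run into concrete trouble, and the paper avoids both by a device you do not use, namely the reflexive hull $(-)^{\vee\vee}$. First, $\Hom_{\sh{A}}(\sh{K},\sh{A})$ is not one-dimensional under the stated hypotheses: applying $\Hom_{\sh{A}}(-,\sh{A})$ to $0\to\sh{K}\to\sh{A}\to\sh{F}\to0$ and using $\Hom_{\sh{A}}(\sh{F},\sh{A})=0$ (torsion into pure) and $\Ext^1_{\sh{A}}(\sh{F},\sh{A})\cong\Ext^2_{\sh{A}}(\sh{A},\sh{F})^*=H^2(X,\sh{F})^*=0$ (CY3 Serre duality, $\dim\supp\sh{F}\leq1$) gives $\Hom_{\sh{A}}(\sh{K},\sh{A})\cong H^0(X,\sh{A})$, which equals $\mathbb{C}$ for the quantum Fermat quintic but is not assumed in the proposition. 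Second, and more seriously, the Ext bookkeeping you defer to as ``the main obstacle'' is exactly where the classical argument relies on the trace splitting $\Ext^{\bullet}(\sh{I},\sh{I})=\Ext^{\bullet}_0\oplus H^{\bullet}(\sh{O}_X)$, which is unavailable here. In the long exact sequence for $\Hom_{\sh{A}}(\sh{K},-)$, the kernel of the tangent comparison $\Hom_{\sh{A}}(\sh{K},\sh{F})\to\Ext^1_{\sh{A}}(\sh{K},\sh{K})$ is a quotient of $H^0(X,\sh{A})$, and its cokernel is the image of $\Ext^1_{\sh{A}}(\sh{K},\sh{K})$ in $\Ext^1_{\sh{A}}(\sh{K},\sh{A})\cong\Ext^2_{\sh{A}}(\sh{F},\sh{A})\cong H^1(X,\sh{F})^*$, which is typically nonzero; the hypothesis $H^1(X,\sh{A})=0$ alone does not make that map vanish. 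So the LES route does not close as outlined.

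The paper's proof is different and shorter. For injectivity: since $\deg h\leq1$, the quotient $\sh{F}$ is supported in codimension $\geq2$, so $\sh{I}\hookrightarrow\sh{A}$ induces an isomorphism $\sh{I}^{\vee\vee}\to\sh{A}^{\vee\vee}\cong\sh{A}$, and the embedding is recovered from the abstract module as the canonical map $\sh{I}\to\sh{I}^{\vee\vee}$. For \'etaleness: given a square-zero extension $S\subset\overline{S}$ and an $\sh{A}$-module deformation $\overline{\sh{I}}$ of $\sh{I}$, the module $\overline{\sh{I}}^{\vee\vee}$ is a deformation of $\sh{I}^{\vee\vee}\cong\sh{A}_S$, which must be the trivial one $\sh{A}_{\overline{S}}$ precisely because $\Ext^1_{\sh{A}}(\sh{A},\sh{A})=H^1(X,\sh{A})=0$; hence $\sh{A}_{\overline{S}}\to\overline{\sh{I}}^{\vee\vee}/\overline{\sh{I}}$ deforms the quotient, and conversely every deformation of the quotient deforms $\sh{I}$. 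This produces the bijection on infinitesimal liftings directly, with no tangent-space computation and no trace. If you want to repair your route, you would need to add $H^0(X,\sh{A})=\mathbb{C}$ and find a substitute for the trace argument; otherwise switch to the double-dual argument. One last point: the conclusion should be drawn from ``\'etale and universally injective implies open immersion'' (equivalently, an \'etale monomorphism is an open immersion); an unramified monomorphism with matching tangent spaces need not be flat, so the fact you cite at the end is not quite the right one.
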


\begin{proof}
Let $0\to\sh{I}\to\sh{A}\to\sh{F}\to 0$ be a quotient in $\Hilb^h(\sh{A})$. Since $\sh{F}$ has codimension $\geq 2$ support, the map $\sh{I}\to\sh{A}$ induces an isomorphism
\[
\sh{I}^{\vee\vee}\to\sh{A}^{\vee\vee}\cong\sh{A}
\]
of $\sh{A}$-modules. This shows that the quotient $\sh{A}\to\sh{F}$ is uniquely determined by its kernel $\sh{I}$, which means that the morphism (\ref{eqn:hilb2M}) is injective.

Next, we show the morphism is \'{e}tale. Let $S$ be a scheme and $S\subset\overline{S}$ a square-zero extension. Fix a flat family of $\sh{A}$-module quotient
\[
0\to\sh{I}\to\sh{A}_S\to\sh{F}\to 0
\]
on $X\times S$. Suppose there exists an $\sh{A}$-module extension $\overline{\sh{I}}$ of $\sh{I}$ over $\overline{S}$, then $\overline{\sh{I}}$ induces a short exact sequence
\[
0\to\overline{\sh{I}}\to \overline{\sh{I}}^{\vee\vee}\to\overline{\sh{F}}\to 0
\]
of $\sh{A}_{\overline{S}}$-modules. Since $\overline{\sh{I}}^{\vee\vee}|_S=\sh{I}^{\vee\vee}\cong\sh{A}_S$, $\overline{\sh{I}}^{\vee\vee}$ is a deformation of $\sh{A}_S$ over $\overline{S}$, which by our assumption, it must be $\sh{A}_{\overline{S}}$. Note that taking double dual $(-)^{\vee\vee}$ in general does not preserve flatness, but it is flat over an open subset. So in our situation, $\sh{I}^{\vee\vee}$ is flat over $S$, which implies it is flat over $\overline{S}$. Thus $\sh{A}_{\overline{S}}\to\overline{F}$ is a deformation (as $\sh{A}$ modules) of the quotient $\sh{A}_S\to\sh{F}$. This is easy to see that the converse is also true: any deformation of the quotient $\sh{A}_S\to\sh{F}$ gives a deformation of $\sh{I}$. The proof is completed.
\end{proof}

Consequently, the Hilbert scheme $\Hilb^h(\sh{A})$ for polynomial $h$ with $\deg(h)\leq 1$ carries a symmetric obstruction theory. We define the DT invariant
\[
\DT^h(\sh{A})=\int_{[\Hilb^h(\sh{A})]_{\vir}}1.
\]
which is equal to the weighted Euler characteristic $\chi\big(\Hilb^h(\sh{A}),\nu_{\Hilb^h(\sh{A})}\big)$.

\begin{example}
Recall that any Azumaya algebra on a Calabi--Yau variety is again Calabi--Yau. Via the correspondence between Azumaya algebras and gerbes, this gives a definition of Donaldson--Thomas invariants on gerbes $\st{X}\to X$ over Calabi--Yau threefolds, which has been studied by \cite{GT13}.
\end{example}

\begin{remark}\label{rmk:dt-ncy}
While we only construct a perfect obstruction theory when $\sh{A}$ is Calabi--Yau. It is expected that (some truncation of) the obstruction theory (\ref{eqn:ob_theory_U}) is perfect for more general $(X,\sh{A})$. For example, if $\sh{A}$ is of global dimension $\leq 2$, then $\Ext_{\sh{A}}^3(\sh{F},\sh{F})=0$ for all $\sh{F}$, thus the obstruction theory is automatically perfect. This gives an analogue of Donaldson invariants for certain non-commutative surfaces.

Another example is if $\sh{A}$ is of global dimension $3$ and $H^i(\sh{A})=0$ for all $i>0$, then it is easy to see that $\Ext^3_{\sh{A}}(\sh{I},\sh{I})=0$ for any ideal sheaf $\sh{I}$ in $\Hilb^h(\sh{A})$ (see \cite[Lemma 2]{MNOP1}). This implies that $\Hilb^h(\sh{A})$ carries a perfect obstruction theory via Proposition~\ref{prop:dt-on-hilb}.
\end{remark}

\begin{example}
We consider the case $X=\Spec(\mathbb{C})$ is a point with $\sh{A}=A$ a finite-dimensional algebra of finite global dimension. Then (stable) coherent $\sh{A}$-modules are exactly finite-dimensional (irreducible) representations of $A$. These DT invariants give virtual counts of irreducible representations. Unfortunately, the moduli space $M^{s,n}(A)$ is in general not projective unless $n=1$.
\end{example}

We end this section by verifying that quantum Fermat quintic threefolds satisfy the assumptions in Proposition~\ref{prop:dt-on-hilb}. 

Let $A$ be a graded algebra associating to a quantum Fermat quintic threefold, and $(X,\sh{A})$ as described in \ref{sec:QFQ}. Since $A$ is a domain, the stalk $\sh{A}_{\eta}$ at the generic point $\eta\in X$ is also a domain. Combining with the fact that $\sh{A}_{\eta}$ is a finite-dimensional algebra over the function field $k(X)$, $\sh{A}_{\eta}$ is a division algebra. Also $X\cong\mathbb{P}^3$ and $\sh{A}$ is locally free, thus $H^1(X,\sh{A})=0$.

\appendix

\bibliographystyle{plain}

\end{document}